\newcommand{\rr}{{\mathbb R}}
\newcommand{\Sp}{{\mathbb S}}
\newcommand{\eps}{\varepsilon}
\newcommand{\set}[1]{\left\{#1\right\}}
\newcommand{\pare}[1]{\left(#1\right)}
\newcommand{\abs}[1]{\left|#1\right|}
\newcommand{\norm}[1]{\abs{\abs{#1}}}
\newcommand{\W}{{\mathcal W}}
\newcommand{\pI}[1]{\left <{#1}\right >}
\newcommand{\restri}[2]{\left.#1\right|_{#2}}
\newcommand\restr[2]{{
		\left.\kern-\nulldelimiterspace 
		#1 
		\vphantom{\big|} 
		\right|_{#2} 
}}
\newtheorem{theorem}{Theorem}[section]
\newtheorem{lemma}[theorem]{Lemma}
\newtheorem{proposition}[theorem]{Proposition}
\newtheorem{corollary}[theorem]{Corollary}
\newtheorem{definition}[theorem]{Definition}
\newtheorem{remark}[theorem]{Remark}
\title[Maximum principle for $\gamma$-translators II]{Maximum Principles and Consequences for $\gamma$-translators in $\rr^{n+1}$ II}
\author{José Torres Santaella}
\address{Department of Mathematics and Statictits\\ Southern Maine University\\ Portland, ME, US.}
\email{jos.torres@maine.edu}
\keywords{Translating solitons; Fully non-linear extrinsic flow; Maximum principle; Grim Reaper; Elliptic PDEs.}
\subjclass[2010]{53A10, 53C21, 53C42, 53E10, 35J60, 58J70}
\begin{document}

	\maketitle
	
	\begin{abstract}
	This paper focuses on the translating solitons of fully nonlinear extrinsic curvature geometric flows in $\mathbb{R}^{n+1}$. We present a generalization of the Spruck-Xiao's and Spruck-Sun's convexity results for $1$-homogeneous convex/concave curvature functions, and further provide several characterizations of the family of grim reaper cylinders under curvature constraints.
	\end{abstract}

	\section{\textbf{Introduction}}
	Extrinsic geometric flows (EGFs) belong to the theory of geometric flows, which are powerful tools for investigating the deformation of geometric objects such as manifolds, maps, tensors, or geometric structures over time. The goal is to simplify a geometric quantity (usually by making it constant) and characterize the resulting deformation in terms of the shape, curvature and topology of the geometric object.
	\newline
	
	In the theory of EGFs, we study the evolution/deformation of an initial immersed hypersurface $\Sigma_0=F_0(\Sigma)$ in a Riemannian manifold (in this paper, $\rr^{n+1}$ with its Euclidean metric structure). The speed and direction of this deformation are determined by extrinsic geometric objects of the evolution $\Sigma_t=F(\Sigma,t)$, where $F:\Sigma\times (0,T)\to\rr^{n+1}$ is a $1$-parameter family of immersion.
	\newline
	
   	More precisely, we say that $\Sigma_0\subset\rr^{n+1}$ evolves under an EGF given by the curvature function $\gamma:\overline{\Gamma}\to [0,\infty)$, or $\gamma$-flow for short, if there exists a $1$-parameter family of immersions $F:\Sigma\times[0,T)\to\rr^{n+1}$ such that 
   \begin{align}\label{gamma-flow}
   	\begin{cases}
   		\pI{\dfrac{\partial F}{\partial t}(x,t),\nu(x,t)}=\gamma(\lambda(x,t)),\mbox{ in }\Sigma\times(0,T),
   		\\
   		F(x,0)=F_0(x),
   	\end{cases}
   \end{align}
   where $\lambda(x,t)=(\lambda_1(x,t),\ldots,\lambda_n(x,t))\in\Gamma$, is the principal curvature vector at  $F(x,t)\in\Sigma_t$, $\pI{\cdot,\cdot}$ and $\nu(x,t)$, respectively, are the euclidean inner product and the positively oriented (see Sec.\ref{Sec:Preliminaries}) unit normal vector of $\Sigma_t$ in $\rr^{n+1}$, respectively. 
   \newline
   
   Furthermore, we consider the class of $1$-homogeneous curvature function $$\gamma:\overline{\Gamma}\to[0,\infty),$$
where $\Gamma_+=\set{\lambda\in\rr^n:\lambda_i>0}\subset\Gamma\subset\rr^n$ is a symmetric open cone  and the following properties hold: 
\begin{enumerate}[a)]
\item\label{a)} $\gamma:\Gamma\to(0,\infty)$ is a smooth symmetric, i.e., $\gamma((\lambda_{\sigma(i)}))=\gamma((\lambda_i))$ for every permutation $\sigma$, and positive function. 
\item\label{b)} $\gamma$ is positively $1$-homogeneous, i.e., for every $c>0$, $\gamma(c\lambda)=c\gamma(\lambda)$. 
\item\label{c)} $\gamma$ is increasing on each argument, i.e., $\frac{\partial \gamma}{\partial \lambda_i}(\lambda)>0$.  
\item\label{d} $\gamma$ posses a continuous extension to $0$ at the boundary of $\Gamma$, i.e: there exist a continuous function $\tilde{\gamma}:\overline{\Gamma}\to[0,\infty)$ such that $\restri{\tilde{\gamma}}{\Gamma}=\gamma$ and $\restri{\tilde{\gamma}}{\partial\Gamma}=0$.
\end{enumerate}

To illustrate, in one dimension we only have the curve shortening flow (CSF) in this class of EGFs, where $\gamma(\lambda)=\lambda$. A relevant result  of  CSF is that  simple closed initial curves collapse  into a round point in finite time, i.e., once the curve becomes convex, it deforms into a shrinking circle before collapsing. This phenomenon is known as the development of a singularity under the flow, which is commonly studied with the blow-up rate of a certain extrinsic curvature term of $\Sigma_t$.
  \newline
  
  In higher dimensions, there is no unique analog to CSF due to the presence of multiple principal direction  which a hypersurface in $\rr^{n+1}$ is curved. For this reason, we mention some examples of EGFs that have been of interest due to their diverse applications in geometry and topology: The mean curvature flow (MCF), where the $ \gamma=H=\lambda_1+\dots+\lambda_n$, is the gradient flow of the area functional and it is also an analog of the heat equation for immersions.   The $\alpha$-Gauss curvature flow  ($\alpha$-GCF), where $\gamma=	K^\alpha=(\lambda_1\ldots\lambda_n)^\alpha$ with $\alpha\in\rr$, has been used to deform convex hypersurfaces into ones that attains an isopermetric inequalities such  as Minkowski type of problems. 
  \newline

The class of $\gamma$-flows considered in this article can be viewed as fully nonlinear analogs of the MCF, as the curvature functions are $1$-homogeneous and belong to the class of contractive flows. More specifically, a flow is called contractive when strictly convex (or convex, in the case of the MCF) closed (compact without boundary) initial data collapse to a round point in finite time. Unlike the CSF, in higher dimensions, a normalization of the $\gamma$-flow possesses a subfamily that contracts as spheres toward its center. For a comprehensive introduction to the EGFs, we refer interested readers to the book \cite{chow2020extrinsic}.
  \newline
  
  In this paper, we are interested in a particular type of evolution given by translations in a fixed unit direction, where the solutions are known as translating solitons\footnote{A term from fluid physics, referring to single waves that maintain their shape and velocity over long propagation through a medium.}, or $\gamma$-translators for short. These are self-similar solutions to Eq. \eqref{gamma-flow}, given by
   \begin{align*}
  	F(x,t)=F_0(x)+e_{n+1}t, \mbox{ for all }t\in\rr,
  \end{align*}
  up to tangential diffeomorphisms of $\Sigma_0$, and $e_{n+1}=(0,\ldots,0,1)\in\mathbb{R}^{n+1}$.  
  \newline
  
The interests in $\gamma$-translators for a curvature function $\gamma : \overline{\Gamma} \to [0, \infty)$, as found in the literature, lies in the following topics: From the perspective of EDP, a $\gamma$-translator can furthermore be studied by elliptic PDE theory, since each time slice satisfies a curvature constrain in terms of an angle function of $\Sigma_t$, i.e.,
	\begin{align}\label{gamma-trans}
		\gamma(\lambda) = \langle \nu, e_{n+1} \rangle.
	\end{align}
	
	More recently, as demonstrated in \cite{lynch2023differential}, for this class of $1$-homogeneous curvature functions, $\gamma$-translators serve as models for Type II singularities that may arise under the evolution of their respective $\gamma$-flow,
	as in the case of $2$-convex closed initial data, see Fig. \ref{Dumbbell}. We refer the reader to \cite{cogo2023rotational} for the classification of this singularities.
	\begin{figure}
		\begin{center}
			\includegraphics[width=.4\textwidth, height=2.5cm]{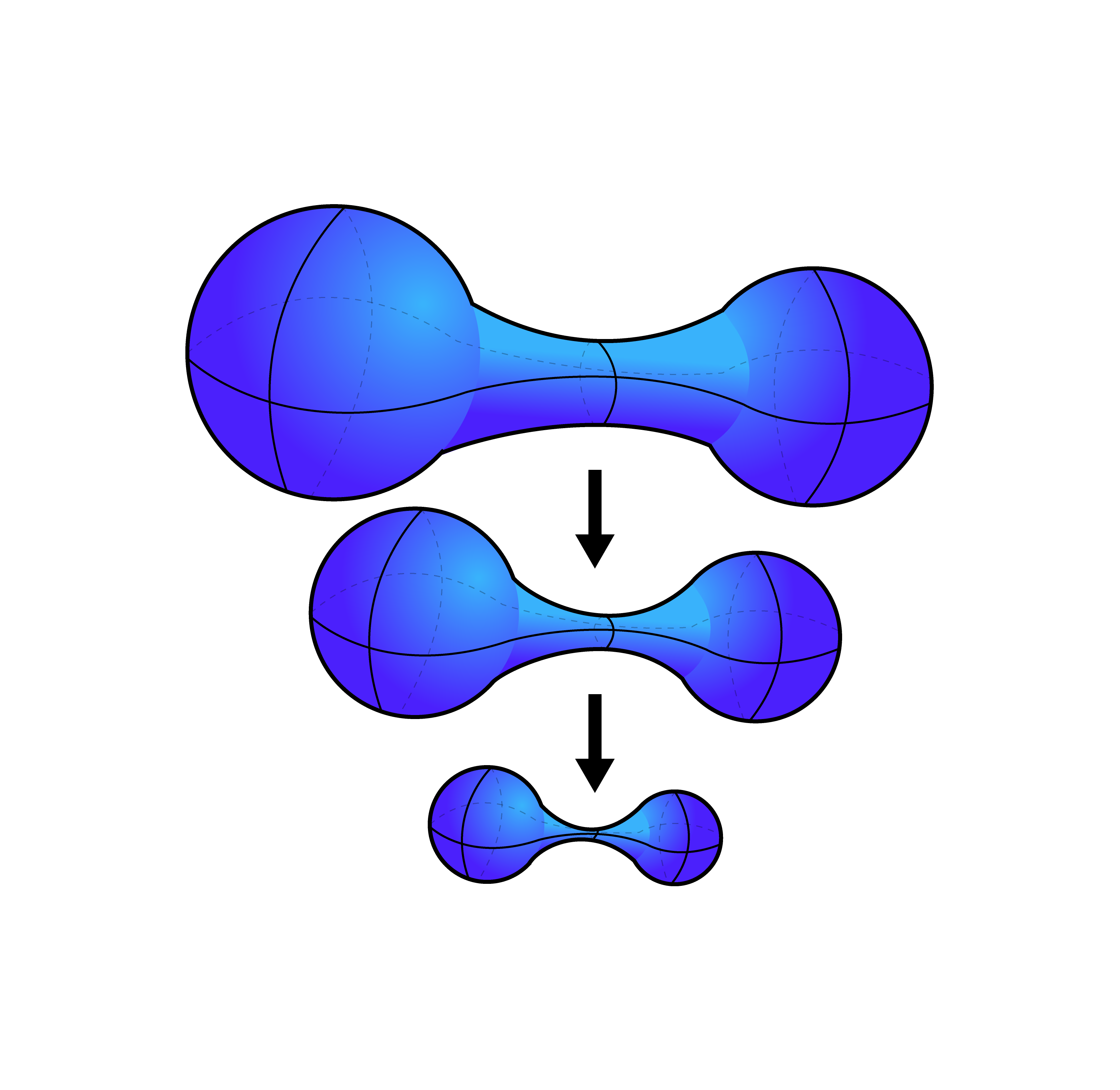}
			\caption{The evolution of a dumbbell under the MCF presenting a type II singularity in its neck.}
			\label{Dumbbell}
		\end{center}
	\end{figure}
  Additionally, $\gamma$-translators serve as candidates for limits of non-compact ancient solutions, i.e., $\Sigma_t$ is an ancient solution to Eq. \eqref{gamma-flow} if $t \in (-\infty, a)$ for some $a \in \mathbb{R}$, and the limit it is considered as $a \to \infty$.
These topics have been studied more extensively for $\gamma=H$ in $\rr^{n+1}$. We refer the reader to the survey \cite{hoffman2017notes} for more details. For recent advances where $\gamma$ is a $\alpha$-homogeneous curvature functions, we suggest the following references: \cite{rengaswami2023ancient} for the construction of ancient solution, so-called ancient pancake that are asymptotic to Grim-Reaper cylinders (see blow); \cite{Shati-Yo} for the asymptotic behavior of bowl-type solutions and their applications; and \cite{Yo} for a uniqueness result of graph solutions smoothly asymptotic to round cylinders. For powers of Gaussian curvature, see \cite{choi2020uniqueness}, and for powers of mean curvature, see \cite{lou2023translating}.
	\newline
	
The main result of this paper generalizes the results of Spruck and Xiao in \cite[Theorem 1.1]{spruck2020complete} and \cite[Theorem 1.1]{xie2023convexity} to nonlinear curvature functions.
	
	\begin{theorem}\label{Generalization of SX}
	Let  $\gamma:\overline{\Gamma}\to[0,\infty)$ be a curvature function and $n\geq 2$. Let $\Sigma\subset\rr^{n+1}$ be a complete $\gamma$-translator. Assume further:
		\begin{enumerate}
			\item  If $n=2$:
			
			\begin{enumerate}
				\item The principal curvatures of $\Sigma$ belong to 
				\begin{align*}
					\Gamma_{\alpha}=\set{\lambda\in\Gamma: \alpha|A|\leq H},\mbox{ with } \alpha\in (0,1).
				\end{align*}
				\item Either  $\gamma(\lambda)$ is concave and $(0,1)\in\Gamma$, or $\gamma(\lambda)$ is convex.
			\end{enumerate}
		\item If $n\geq 3$:
		\begin{enumerate}
		\item The principal curvatures of $\Sigma$ belongs to $$\tilde{\Gamma}= \set{\lambda\in\Gamma: H>0,\mbox{ and }\lambda_1+\lambda_2>0}.$$
			\item  Either  $H\leq \gamma$ and it is strictly convex in off-radial directions, or $\gamma\leq H$ and it  is strictly concave in off-radial directions (see Def. \ref{def}).
 		\end{enumerate}	
		\end{enumerate}
		Then, $\Sigma$ is convex. 
	\end{theorem}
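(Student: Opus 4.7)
The plan is to reduce the conclusion to showing that the smallest principal curvature $\lambda_1$ is nonnegative everywhere on $\Sigma$, via a strong maximum principle applied to a pinching function $f$ whose negativity quantifies non-convexity. The scheme mirrors the linear mean-curvature case but with the Laplacian replaced by the linearized operator $\mathcal{L} = \gamma^{ij}\nabla_i\nabla_j$, where $\gamma^{ij} = \partial\gamma/\partial h_{ij}$ evaluated at the second fundamental form. The first step is to establish a translator Simons-type identity: differentiating $\gamma(\lambda) = \pI{\nu, e_{n+1}}$ twice tangentially and using Codazzi, one obtains
\begin{equation*}
\mathcal{L} h_{ij} = \pI{e_{n+1}^\top, \nabla h_{ij}} + Q_{ij}(A) + \gamma^{pq,rs}\nabla_i h_{pq}\nabla_j h_{rs},
\end{equation*}
where $Q_{ij}$ is quadratic in the second fundamental form and the last term is the nonlinear correction whose sign is governed by the concavity/convexity of $\gamma$, and is precisely what hypotheses \textit{(1b)}/\textit{(2b)} are designed to control.

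Next I would choose a pinching function $f$ encoding possible non-convexity. For $n = 2$ under the $\alpha$-pinching $\alpha|A| \leq H$, the natural candidate is $f = \lambda_1/|A|$ (or a variant tuned to $\alpha$), whose negativity certifies the failure of convexity; for $n \geq 3$ under $2$-convexity, the natural choice is $f = \lambda_1/H$, bounded below by $-1/(n-1)$ and satisfying $f \geq 0$ if and only if $\lambda_1 \geq 0$. Computing $\mathcal{L} f$ at an interior (or asymptotic) minimum where $\lambda_1$ is simple, the off-radial convexity or concavity of $\gamma$ assigns a definite sign to $\gamma^{pq,rs}\nabla_i h_{pq}\nabla_j h_{rs}$, while in case $(2)$ the comparison $H \leq \gamma$ or $\gamma \leq H$ controls the translator drift contributed by $\pI{e_{n+1}^\top, \nabla f}$; together these yield a differential inequality of the form $\mathcal{L} f + \pI{V, \nabla f} \geq 0$ (or the reverse) for a bounded drift $V$ depending on $\nabla\log H$ or $\nabla\log|A|$.

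Finally, because $\Sigma$ is complete and possibly non-compact, I would invoke an Omori--Yau type maximum principle for $\mathcal{L}$, whose hypotheses should follow from the a priori bound $\gamma(\lambda) \leq 1$ inherent in \eqref{gamma-trans} together with $1$-homogeneity. If $\inf_\Sigma f < 0$ is attained, the strong maximum principle forces $f$ to be constant, and equality in the Simons identity combined with the pinching/$2$-convexity forces a Riemannian splitting of $\Sigma$ along a line, incompatible with the translator equation; if the infimum is realized only at infinity, the Omori--Yau version yields the same contradiction asymptotically. The main obstacle is expected to sit between the Simons identity and the scalar inequality for $f$: the function $f$ fails to be smooth at points where $\lambda_1 = \lambda_2$, so a small perturbation or viscosity-subsolution argument is needed for $\mathcal{L} f$ to be meaningful at the minimum, and the algebraic signs coming from the off-radial condition must dominate the Kato-type gradient terms produced by Codazzi. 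The precise pairing of $\gamma$ vs.\ $H$ with convex vs.\ concave in hypothesis \textit{(2b)} is exactly what makes this cancellation succeed, and identifying the correct analog of this pairing in the case $n=2$ is the subtlest point of the argument.
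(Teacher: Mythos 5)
Your broad strategy — a translator Simons identity for the shape operator, a pinching function certifying non-convexity, strong maximum principle on the interior, Omori--Yau for the non-compact case, with the off-radial Hessian sign controlling the nonlinear gradient term — is exactly the paper's framework, so the architecture is correct. However, there are three places where your plan diverges from or under-specifies what the paper actually does, and at least the first of these is substantive.

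First, the choice of pinching quantity. You propose $\lambda_1/|A|$ (for $n=2$) and $\lambda_1/H$ (for $n\geq3$). The paper instead normalizes by $\gamma$: for $n=2$ it uses $h=\gamma/\lambda_2$ (concave case) and $j=\lambda_1/\gamma$ (convex case), and for $n\geq3$ it uses $\lambda_1/\gamma$ (convex case) and, non-obviously, $\gamma/(H-\lambda_1)$ (concave case). Normalizing by $\gamma$ is not cosmetic: $\gamma$ satisfies $\Delta_\gamma\gamma+\nabla_{n+1}\gamma+|A|_\gamma^2\gamma=0$ with \emph{no} second-derivative-of-$\gamma$ gradient term, whereas $H$ satisfies \eqref{H} which carries the extra term $\partial^2\gamma^{ab;cd}\nabla_iA_{ab}\nabla_iA_{cd}$. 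Your ratio would introduce this extra contribution with a prefactor $\lambda_1/H^2<0$, and verifying that its sign still cooperates with the $\nabla_1A$ term and the Codazzi/Kato term is an additional step you have not carried out; the paper's normalization avoids this issue by design. In the concave $n\geq3$ case the paper's $\gamma/(H-\lambda_1)$ is tailored so that the denominator never degenerates on $\tilde\Gamma$ and the ratio stays in $(0,1]$ — this is not recoverable from $\lambda_1/H$ alone.

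Second, you correctly flag the non-smoothness of $\lambda_1$ at umbilic points but leave it as an ``expected obstacle.'' The paper resolves it explicitly: for $n=2$ the principal curvatures are automatically simple on $\Sigma^-=\{\lambda_1<0\}$ since $\lambda_1<0<\lambda_2$; for $n\geq3$ the argument runs on the open dense subset $\Sigma_A^-$ where the eigendistributions of $A$ are locally constant, and the ratio is composed with the smooth cutoff $f(r)=-r^4e^{-1/r^2}$ for $r<0$, $f(r)=0$ for $r\geq0$, which both regularizes and localizes to the non-convex region. Without this device the maximum principle cannot be applied as written.

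Third, your treatment of the at-infinity case is too compressed. The paper's Omori--Yau step is not a one-line reduction: after extracting a minimizing Omori--Yau sequence $p_k$, one translates the surface by $p_k$, uses the uniform second-fundamental-form bound (coming from $\gamma=\langle\nu,e_{n+1}\rangle\leq1$ and uniform ellipticity on the pinching cone) to pass to a complete limit translator $\Sigma_\infty'$ containing the origin, identifies $\Sigma_\infty'$ as a vertical hyperplane, then tracks the limits of the scaled quantities $\nabla\gamma/\lambda_2$, $\nabla\lambda_i/\lambda_2$ (or their $\gamma$-normalized analogs) to extract a strictly negative limit on the right side of the differential inequality, contradicting $\delta\geq0$. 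This compactness/blow-down step is where the uniform ellipticity (Lemma \ref{Elliptic estimate n=2} and Lemma \ref{Eliptic estimate}) is actually used, and your sketch should name it rather than assert that the contradiction ``yields the same'' asymptotically.
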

 \begin{remark}
The families of cones $\Gamma_\alpha$ are preserved under $\gamma$-flow. Moreover,  we observe that the points $p\in\Sigma$ such that the principal curvatures $\lambda\in\Gamma_{\alpha}$ with $\alpha\geq1$, are points where the principal curvatures are nonnegative since $\lambda_1+\lambda_2>0,$ and $\lambda_1\lambda_2\geq0$. The condition $(0,1)\in\Gamma$ for concave curvature functions includes the family of $\gamma_t(\lambda)=tH+(1-t)\gamma(\lambda)$, where $t\in (0,t_0]$ for some $t_0\in (0,1]$, and $\gamma(\lambda)$ is a concave curvature function.  Furthermore, this condition is sharp in the sense that if $\gamma(0,1)=0$, then $\Gamma=\Gamma_+$.   Example of strictly concave in off-radial directions curvature functions include the family of Hessian  functions  $\sqrt[k]{S_{k}}$ supported in  	$\Gamma_k=\set{\lambda\in\rr^n: S_i(\lambda)>0,\mbox{ for }i=1,\ldots,k}$,	 where $S_k(\lambda)$ denotes the symmetric elemental polynomial of order $k$ in $n$-the variables, i.e., $S_0=1$, $S_{i}=0$ if $i>n$ and $S_k(\lambda)=\sum\limits_{i\leq i_{1}<\ldots<i_k\leq n}\lambda_{i_{1}}\ldots\lambda_{i_{k}}$.

\end{remark}
	
	In the remainder of the paper, we focus on characterizations of the family of Grim Reaper cylinders in $\rr^{n+1}$ for curvature functions satisfying the non-degeneracy condition  $\gamma(1,0,\ldots,0)=1$, using maximum principle arguments. 
\newline

More precisely, a Grim Reaper cylinder is a graphical hypersurface that is isometric to a Cartesian product of a Euclidean factor $\rr^{n-1}$ with the planar Grim Reaper curve $\left(x,-\ln(\sin(x))\right)$. Additionally, this curve is the unique solution (up to vertical translations) to the one-dimensional version of Eq. \eqref{gamma-trans}.

	\begin{remark}
We note that without the normalization condition $\gamma(1,0,\ldots,0)=1,$ the Grim Reaper cylinders evolve under translations in a non-unitary direction when their associated $\gamma$-flow  is applied.
	\end{remark}
	
The non-existence result obtained in this article is an application of the tangential principle presented in \cite{Yo}, applied to the Grim Reaper cylinders family.
	\begin{theorem}\label{non-existence}
		Let  $\gamma:\overline{\Gamma}\to[0,\infty)$ be a non-degenerate, normalized curvature function. Then, there does not exist a complete, strictly convex $\gamma$-translator $\Sigma\subset\rr^{n+1}$ with compact boundary contained in any non-vertical cylinder.
	\end{theorem}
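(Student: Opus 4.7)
The plan is to derive a contradiction by using a one-parameter family of Grim Reaper cylinders as barriers and invoking the tangential principle of \cite{Yo}. Thanks to the normalization $\gamma(1,0,\ldots,0)=1$ together with the $1$-homogeneity of $\gamma$, every Grim Reaper cylinder---obtained from the planar Grim Reaper curve by Cartesian product with a horizontal $\rr^{n-1}$-factor and affine translations---is itself a $\gamma$-translator. Assume for contradiction that a complete, strictly convex $\gamma$-translator $\Sigma$ with compact boundary $\partial\Sigma$ exists, contained in a non-vertical cylinder $C$ with axis direction $v=v_h+v_z e_{n+1}$ and nonzero horizontal component $v_h$.

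I would then fix a reference Grim Reaper cylinder $G_0$ whose curved directrix lies in the vertical $2$-plane spanned by $v_h$ and $e_{n+1}$, and whose $(n-1)$ generators span the horizontal hyperplane orthogonal to $v_h$. Consider the two-parameter family $G_{s,\tau}=G_0+s\,v_h/|v_h|+\tau e_{n+1}$ of translates. The non-verticality of $C$ implies that the horizontal projection of $\Sigma$ lies in a strip-like region transverse to the generators of $G_0$, so for $\tau$ sufficiently large the cylinder $G_{s,\tau}$ avoids the compact boundary $\partial\Sigma$, and for $s$ sufficiently far in the direction of $v_h/|v_h|$ it is disjoint from $\Sigma$ entirely. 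Sliding $s$ back toward $\Sigma$, a continuity argument produces first-contact parameters $(s_0,\tau_0)$ and an interior contact point $p_0\in G_{s_0,\tau_0}\cap\Sigma$. At $p_0$ the two tangent planes coincide and $G_{s_0,\tau_0}$ lies locally on one side of $\Sigma$, so the tangential principle of \cite{Yo} forces $\Sigma\equiv G_{s_0,\tau_0}$ in a neighborhood of $p_0$, and then on the connected interior component of $\Sigma\setminus\partial\Sigma$ by unique continuation. This contradicts the strict convexity of $\Sigma$, since the Grim Reaper cylinder has $n-1$ vanishing principal curvatures along its generators.

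The main obstacle will be the careful execution of the first-contact step. One must verify that (i) the non-vertical hypothesis on $C$ genuinely permits the Grim Reaper barrier to escape $\Sigma$ by translation---this is precisely where the hypothesis is used and fails for vertical cylinders; (ii) the compactness of $\partial\Sigma$, combined with a sufficiently large vertical lift $\tau e_{n+1}$, pushes the contact into the interior of $\Sigma$; and (iii) the hypotheses of the tangential principle of \cite{Yo}---notably the correct one-sidedness at $p_0$ and ellipticity of the linearization---are satisfied, using that $\Sigma$ is strictly convex and that the principal curvature vector of the Grim Reaper cylinder lies in $\overline{\Gamma}$ by the non-degeneracy of $\gamma$.
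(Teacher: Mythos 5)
Your high-level strategy -- use the family of Grim Reaper cylinders as barriers and invoke the tangential principle from \cite{Yo} -- is exactly the paper's, and steps (iii) about the applicability of that principle are fine. The genuine gap is in the first-contact argument, specifically in your escape claim. The paper keeps the Grim Reaper cylinder inside a \emph{fixed} vertical slab $S$ in a horizontal direction transverse to the cylinder axis, chosen so that $S\cap\partial\Sigma=\emptyset$, and then translates it \emph{vertically} downward from above. This works because non-verticality of $C$ makes $S\cap C$, and hence $S\cap\Sigma$, compact, so for $t$ large $G^\pi+te_{n+1}$ lies strictly above this compact set; the first contact is interior precisely because the slab already avoids $\partial\Sigma$. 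You instead fix a large vertical offset $\tau$ and slide \emph{horizontally} in direction $v_h/|v_h|$, claiming that for $s$ large $G_{s,\tau}$ is disjoint from $\Sigma$. That claim does not hold in general: if $C$ is horizontal ($v_z=0$), the heights occupied by $C$ (hence by $\Sigma$) in the slab $\{s<\langle x,v_h/|v_h|\rangle<s+\pi\}$ are independent of $s$, so either $\tau$ is so large that $G_{s,\tau}$ never meets $\Sigma$ for any $s$ (no first contact) or $\tau$ admits intersections for every $s$ in the relevant range (no escape, and the slide may also hit $\partial\Sigma$). For tilted $C$ ($v_z\neq 0$) only one of the two $\pm v_h$ directions can possibly produce escape -- the one along which the cylinder chunk descends below the fixed floor $\tau$ -- and you have not checked the sign. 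The clean fix is to do what the paper does: do not move the Grim Reaper cylinder horizontally at all; rather, choose the slab far enough along $v_h$ (past the compact $\partial\Sigma$) so that it misses $\partial\Sigma$ while still meeting $\Sigma$, note $S\cap\Sigma$ is compact by non-verticality, and then translate vertically down to a first interior touching point.
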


The following characterizations obtained in this article in terms of curvature constrains generalize the results presented in \cite[Lemma 5.1]{spruck2020complete}, \cite[Thm. B, Thm. B]{Paco_2014,martinez2022mean}, to this family of non-degenerate curvature functions.
	\begin{theorem}\label{convex}
		Let  $\gamma:\overline{\Gamma}\to[0,\infty)$ be a convex, non-degenerate, normalized curvature function.
		Let $\Sigma\subset\rr^{n+1}$ be a complete, convex, non-totally geodesic $\gamma$-translator such that there exists a point $p_0\in\Sigma$ that satisfies  $\lambda_{2}(p_0)=\ldots=\lambda_{n}(p_0)=0$, where the principal curvatures of $\Sigma$ are ordered by $\lambda_n\leq\ldots\leq\lambda_1$. 
		
		Then, $\Sigma$ is a Grim Reaper cylinder. 
		In particular, if $\Sigma$ contains a straight line, then $\Sigma$ is a Grim Reaper cylinder. 
	\end{theorem}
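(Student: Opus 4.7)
\medskip

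\noindent\textbf{Proof plan.}
My overall strategy is to show that the shape operator $A$ of $\Sigma$ has rank at most $1$ everywhere; once this is established, $\Sigma$ splits isometrically as a cylinder over a planar curve, and the normalization $\gamma(1,0,\ldots,0)=1$ identifies that curve as the Grim Reaper. The principal obstacle I foresee is the first step: setting up an elliptic strong maximum principle (SMP) for the second fundamental form that exploits convexity of $\gamma$ with the correct sign, while handling the first-order terms produced by differentiating the Gauss map in the translator equation.

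For the SMP step, I would differentiate the translator equation $\gamma(\lambda)=\langle\nu,e_{n+1}\rangle$ twice, using the Weingarten and Codazzi identities, to obtain a Simons-type equation for the components $h_{kl}$ of the second fundamental form of the schematic form
\begin{equation*}
L h_{kl} \,=\, -\,\gamma^{ij,pq}\,\nabla_k h_{ij}\,\nabla_l h_{pq} \,+\, \text{(lower order in $h$)},
\end{equation*}
where $L=\gamma^{ij}\nabla_i\nabla_j$, and $\gamma^{ij}$, $\gamma^{ij,pq}$ are the first and second partial derivatives of $\gamma$ with respect to its argument. Testing on a null vector $\xi\in\ker h(p_0)$ produces a nonnegative scalar $\phi=h_{kl}\xi^k\xi^l\geq 0$ vanishing at $p_0$; convexity of $\gamma$ ensures the Hessian term contributes with the correct sign, so that $\phi$ is a supersolution of a degenerate elliptic equation. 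Applying Hamilton's strong maximum principle for tensors (equivalently Bony's SMP for degenerate elliptic operators) to the PSD tensor $h$ with its $(n-1)$-dimensional kernel at $p_0$ then gives $\mathrm{rank}(h)\leq 1$ throughout $\Sigma$.

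With $\mathrm{rank}(h)\leq 1$ globally, $\ker h$ is a smooth integrable distribution whose leaves are totally geodesic in $\rr^{n+1}$ (hence flat Euclidean $(n-1)$-spaces) by Codazzi, and completeness gives an isometric splitting $\Sigma=\Gamma\times\rr^{n-1}$ with $\Gamma\subset\rr^2$ a complete convex planar curve. Since translations along the $\rr^{n-1}$-factor preserve $\Sigma$, the component of $e_{n+1}$ along that factor leaves the translator equation invariant, so after an orthogonal decomposition we may assume $e_{n+1}$ lies in the $2$-plane containing $\Gamma$. The translator equation on $\Gamma$ then reads
\begin{equation*}
\lambda_1 \,=\, \gamma(\lambda_1,0,\ldots,0) \,=\, \langle \nu_\Gamma, e_{n+1}\rangle,
\end{equation*}
by the normalization hypothesis; this is exactly the one-dimensional Grim Reaper ODE whose only complete solution up to vertical translation is $(x,-\ln\sin x)$, so $\Sigma$ is a Grim Reaper cylinder.

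For the final ``in particular'' statement, if $\Sigma$ contains a straight line $L$, then $L$ is an ambient geodesic lying in $\Sigma$, hence $h(\dot L,\dot L)\equiv 0$ along $L$, producing a null direction of $h$ at every point of $L$. When $n=2$ this directly yields $\lambda_2(p_0)=0$ at any $p_0\in L$, so the first part of the theorem applies. For higher $n$, the convexity of $\Sigma$ (nonnegative sectional curvature $K_{ij}=\lambda_i\lambda_j\geq 0$) combined with the Cheeger--Gromoll splitting theorem gives an isometric splitting $\Sigma=\tilde\Sigma\times L$, and $\tilde\Sigma\subset\rr^n$ inherits the structure of a complete convex non-totally-geodesic $\tilde\gamma$-translator for $\tilde\gamma(\mu):=\gamma(\mu,0)$, which is still convex, normalized and non-degenerate; an induction on $n$ anchored at the $n=2$ case just treated then completes the proof.
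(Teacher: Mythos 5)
Your main argument tracks the paper's proof closely: the paper applies the scalar strong maximum principle to each diagonal entry $A_{ii}$ in the Simons-type equation \eqref{A_ij}, using convexity of $\gamma$ to control the sign of $\partial^2\gamma^{ab;cd}\nabla_iA_{ab}\nabla_iA_{cd}$ and $|A|_\gamma^2A_{ii}$, concludes that $\lambda_2\equiv\cdots\equiv\lambda_n\equiv0$, and then cites Proposition \ref{prop} (which invokes Hartman--Nirenberg) for the cylindrical splitting. Your tensor-SMP phrasing and de Rham splitting are equivalent variants of the same idea, so the first part of your proof is essentially the paper's. Two points, however, do not work as written. The assertion ``after an orthogonal decomposition we may assume $e_{n+1}$ lies in the $2$-plane containing $\Gamma$'' is false: $e_{n+1}$ will generically have a nonzero component along the flat $\rr^{n-1}$ factor, and it cannot be rotated away without changing the translator equation. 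What is actually true is that $\nu\perp\rr^{n-1}$, so only the orthogonal projection of $e_{n+1}$ onto the $2$-plane enters, but this projection has length $\cos\theta\leq 1$, which rescales the solution to a Grim Reaper of width $\omega=\pi\sec\theta\geq\pi$. The paper's Proposition \ref{prop} handles this by writing $\Sigma$ as a graph $x_{n+1}=u(x_1)+ax_n$ and absorbing the tilt into the width; your argument needs the same correction.

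The more serious gap is your treatment of the ``in particular'' clause. After the Cheeger--Gromoll splitting $\Sigma=\tilde\Sigma\times L$, the factor $\tilde\Sigma\subset\rr^{n}$ is a convex $\tilde\gamma$-translator for $\tilde\gamma(\mu)=\gamma(\mu,0)$, but there is no reason for $\tilde\Sigma$ to contain a line or to have a point of corank $n-2$, so the induction hypothesis cannot be applied and the induction does not close. Worse, for $n>2$ this step cannot be repaired: if $\tilde\Sigma\subset\rr^n$ is a strictly convex bowl-type $\tilde\gamma$-translator (as constructed in the reference \cite{Shati} cited by the paper; note $\tilde\gamma$ is still normalized since $\gamma(1,0,\ldots,0)=1$), then $\Sigma=\tilde\Sigma\times\rr e_n\subset\rr^{n+1}$ is a complete, convex, non-totally-geodesic $\gamma$-translator containing the line $\rr e_n$, yet it has no point where $\lambda_2=\cdots=\lambda_n=0$ and is not a Grim Reaper cylinder. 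So the implication ``contains a line $\Rightarrow$ hypothesis of the theorem holds'' is genuinely false in higher dimensions; the paper leaves the ``in particular'' unproved, and it is really only immediate when $n=2$, where a line tangent to $\Sigma$ forces $\lambda_2(p_0)=0$ directly.
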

	
	\begin{theorem}\label{T1}
		Let $\gamma:\overline{\Gamma}\to[0,\infty)$ be a concave, non-degenerate, normalize curvature function. Let $\Sigma\subset\rr^{n+1}$ be a complete, convex, non-totally geodesic $\gamma$-translator. Then, $\Sigma$ is a Grim Reaper cylinder if, and only if, the function $|A|^2\gamma^{-2}$ attains a local maximum on $\Sigma$.
	\end{theorem}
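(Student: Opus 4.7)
The ``only if'' direction is a direct verification: on a Grim Reaper cylinder, all principal curvatures but $\lambda_1$ vanish, and the normalization $\gamma(1,0,\ldots,0)=1$ together with $1$-homogeneity forces $\gamma(\lambda)=\lambda_1$, so $|A|^2\gamma^{-2}\equiv 1$ and every point is a global maximum.

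For the converse, the plan is to apply the strong maximum principle to $\varphi=|A|^2\gamma^{-2}$ with respect to the drift elliptic operator
\begin{align*}
L\psi=\gamma^{ij}\nabla_i\nabla_j\psi-\langle e_{n+1}^{T},\nabla\psi\rangle,
\end{align*}
naturally adapted to $\gamma$-translators, where $e_{n+1}^T$ denotes the tangential component of $e_{n+1}$ along $\Sigma$. The key step is to derive an inequality of the form $L\varphi\geq\langle X,\nabla\varphi\rangle$ for some bounded tangent field $X$. I would compute $L|A|^2$ via Simons' identity on $\Sigma$ and $L\gamma^{2}$ by differentiating the translator equation \eqref{gamma-trans} twice and contracting with $\gamma^{ij}$, then combine these using the quotient rule. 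After absorbing $\nabla\varphi$-terms into the drift, two non-negative contributions should remain: a Kato-type gradient square completed using the identity coupling $\nabla\gamma$ to $A$ through \eqref{gamma-trans}, and the concavity term $-\varphi\,\gamma^{ij,kl}\nabla_{p}h_{ij}\nabla_{p}h_{kl}\geq 0$, non-negative precisely because $\gamma$ is concave.

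Once this inequality is established, the strong maximum principle applied at the local maximum forces $\varphi\equiv c$ on $\Sigma$ by connectedness. Tracking equality, both non-negative terms must vanish identically. Vanishing of the concavity contribution, together with convexity of $\Sigma$ and a Hamilton-type strong maximum principle for the second fundamental form, should yield that at every point exactly one principal curvature is non-zero. Then $\Sigma$ splits isometrically as $\rr^{n-1}\times\Gamma$ with $\Gamma$ a convex planar curve, and the one-dimensional $\gamma$-translator equation $\lambda_{1}=\langle\nu,e_{n+1}\rangle$ pins $\Gamma$ down as the classical Grim Reaper curve, yielding a Grim Reaper cylinder.

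The main obstacle will be the sharp algebraic manipulation in deriving $L\varphi\geq\langle X,\nabla\varphi\rangle$: one must rearrange the mixed gradient cross-terms into a perfect square so that the concavity sign dominates, and this completion relies essentially on the translator equation itself coupling $\nabla\gamma$ to the second fundamental form (via the Codazzi identity). A secondary subtlety is the splitting step, where only plain concavity is assumed, so possible null directions of the Hessian of $\gamma$ have to be handled via the convexity of $\Sigma$ and the non-totally-geodesic hypothesis rather than a purely algebraic argument.
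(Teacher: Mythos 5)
Your outline follows the paper's proof in its broad structure: the same test function $f=|A|^2\gamma^{-2}$, the same drift operator $\Delta_\gamma +\nabla_{n+1}$ (supplemented with gradient cross-terms), and the same two-term decomposition into a non-negative completed square plus a concavity contribution of the right sign because $\Sigma$ is convex (the weight is $\lambda_i\geq 0$, not $|A|^2$ as you wrote, but this is cosmetic). The maximum principle then forces $f$ constant and both terms to vanish, exactly as you say.

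Where you diverge, and where I think your sketch has a real gap, is the rank-one step. You propose to extract rank one from the vanishing of the concavity term together with a Hamilton-type tensor maximum principle. As you yourself flag, plain concavity of $\gamma$ has null directions, and Hamilton's strong maximum principle for $A\geq 0$ only controls the \emph{nullity} distribution of $A$; it does not by itself force the positive part of $A$ to have rank one. The paper gets rank one purely algebraically from the \emph{other} vanishing term, the completed square $Q^2=\|\gamma\nabla A-\nabla\gamma\otimes A\|_\gamma^2=0$. Together with the Codazzi symmetry this gives $(\nabla_i\gamma)A_{jk}=(\nabla_j\gamma)A_{ik}$ for all $i,j,k$; choosing a frame with $\tau_1=\nabla\gamma/|\nabla\gamma|$ in the open set where $\nabla\gamma\neq 0$ then kills all $A_{jk}$ with $j\geq 2$ directly, no tensor maximum principle needed. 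You also need the separate Case 1, $\gamma\equiv\text{const}$, which is ruled out by Eq.~\eqref{gamma} and the non-totally-geodesic hypothesis. Finally, the splitting you obtain this way is only local on the set where $\nabla\gamma\neq 0$; the paper closes the global statement by identifying a Grim Reaper neighborhood inside $\Sigma$ and then invoking the tangency principle (Theorem~\ref{Tangency}) to conclude $\Sigma$ itself is a Grim Reaper cylinder. I'd encourage you to replace the Hamilton-type step with the $Q^2$ argument and to add the Case 1 dichotomy and the tangency-principle step to make the proof complete.
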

	
Finally, as a direct application of Thm.\ref{Generalization of SX} and the above characterizations of the Grim Reaper cylinders, we obtain the following result.
	
	\begin{corollary}\label{Coro}
		Let $\gamma:\overline{\Gamma}\to [0,\infty)$ be a normalized non-degenerate (strictly convex in off-radial directions, for $n>2$) convex/concave  curvature function, and let $\Sigma\subset\rr^{n+1}$ be a complete not totally geodesic  $\gamma$-translator as in Theorem \ref{Generalization of SX}.  Then, $\Sigma$ is a Grim Reaper cylinder or it is strictly convex. 
	\end{corollary}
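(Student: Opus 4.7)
The plan is to combine Theorem \ref{Generalization of SX} with the characterization Theorems \ref{convex} and \ref{T1}. First, applying Theorem \ref{Generalization of SX} under the relevant branch of its hypothesis yields that $\Sigma$ is convex, reducing the problem to the case when $\Sigma$ is convex but not strictly convex: then there is a point $p_0 \in \Sigma$ at which the smallest principal curvature vanishes, $\lambda_n(p_0) = 0$, while the second fundamental form does not vanish identically.

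In the convex-$\gamma$ case, I would invoke the final clause of Theorem \ref{convex} --- that a complete, convex, non-totally-geodesic $\gamma$-translator containing a straight line is a Grim Reaper cylinder. To produce such a line I would apply a Hamilton-type strong maximum principle to $\lambda_n$: via the Codazzi equations, the translator identity \eqref{gamma-trans}, and the convexity of $\gamma$, $\lambda_n$ satisfies a degenerate elliptic inequality on $\Sigma$, and an interior zero of this nonnegative function forces its null-eigenspace to be parallel, producing an isometric splitting $\Sigma = \rr \times \Sigma'$ and hence a straight line in $\Sigma$.

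In the concave-$\gamma$ case, I would instead verify the hypothesis of Theorem \ref{T1} that $|A|^2 \gamma^{-2}$ attains a local maximum on $\Sigma$. The analogous strong maximum principle, combined with the off-radial strict concavity of $\gamma$, yields iterated splittings $\Sigma = \rr^k \times \Sigma''$ until the factor $\Sigma''$ is either one-dimensional --- in which case $\Sigma''$ is the Grim Reaper curve by the normalization $\gamma(1,0,\ldots,0) = 1$ and $|A|^2 \gamma^{-2} \equiv 1$ attains its maximum everywhere --- or strictly convex of dimension at least two, in which case one invokes the elliptic equation satisfied by $|A|^2 \gamma^{-2}$ on $\Sigma$, whose constancy along the flat factor together with the inequality $\gamma \leq H$ still locates a local maximum. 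Theorem \ref{T1} then identifies $\Sigma$ as a Grim Reaper cylinder.

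The principal obstacle I anticipate is the careful execution of the strong maximum principle in both branches: the elliptic operator governing $\lambda_n$ is only semi-definite due to its own vanishing, and in the concave branch the iterated splitting must be pushed through the off-radial strict concavity condition to ensure that the local maximum of $|A|^2 \gamma^{-2}$ is genuinely attained rather than merely a supremum.
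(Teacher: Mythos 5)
Your overall strategy coincides with the paper's: Theorem \ref{Generalization of SX} gives convexity, and the non-strictly-convex case is resolved by feeding a degenerate point into Theorem \ref{convex} (convex $\gamma$) or Theorem \ref{T1} (concave $\gamma$). But the paper's concave-case argument is different in substance from yours: rather than arguing constancy of $|A|^2\gamma^{-2}$ along a one-dimensional factor or appealing to iterated splittings, the paper directly computes the gradient of the $0$-homogeneous symmetric function $f(\lambda)=|\lambda|^2\gamma(\lambda)^{-2}$ at $\lambda=(1,0,\dots,0)$. Using the Euler identity together with $\gamma(1,0,\dots,0)=1$ one finds $\partial_1 f(1,0,\dots,0)=0$ and $\partial_i f(1,0,\dots,0)=-2\partial_i\gamma(1,0,\dots,0)<0$ for $i\geq 2$; since $\Sigma$ is convex the admissible perturbations keep $\lambda_i\geq 0$, so a point $p_0$ with $\lambda(p_0)$ proportional to $(1,0,\dots,0)$ is automatically a local maximum of $|A|^2\gamma^{-2}$, and Theorem \ref{T1} applies. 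This is cleaner and more explicit than your proposal.

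The genuine gap in your write-up is the sub-case in which the split factor "$\Sigma''$ is strictly convex of dimension at least two." The claim that "constancy along the flat factor together with the inequality $\gamma\leq H$ still locates a local maximum" of $|A|^2\gamma^{-2}$ is unsupported and in fact cannot be true as stated: if $|A|^2\gamma^{-2}$ attained a local maximum on such a $\Sigma$, Theorem \ref{T1} (an equivalence) would force $\Sigma$ to be a Grim Reaper cylinder, contradicting the presence of a strictly convex factor of dimension $\geq 2$. What the argument actually needs — and what the paper implicitly assumes without spelling out — is that the splitting produced by the strong maximum principle yields a degenerate point whose principal curvature vector is $(\lambda_1,0,\dots,0)$, i.e.\ a $1$-dimensional strictly convex factor, so that either the "straight line" clause of Theorem \ref{convex} or the gradient computation above can fire. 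The cone condition $\lambda_1+\lambda_2>0$ in $\tilde\Gamma$ restricts the nullity of $A$ to at most one, which controls the number of flat directions but not the dimension of the strictly convex factor; ruling out the higher-dimensional factor is precisely the nontrivial point, and "one invokes the elliptic equation\dots still locates a local maximum" does not close it.
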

	
This article is organized as follows: Section \ref{Sec:Preliminaries} reviews the key equations. Sections \ref{Sec Generalization of SX} and \ref{Generalization of SX} prove Theorem \ref{Generalization of SX} for $n=2$ and $n>2$, respectively. Section \ref{Grim} covers Theorems \ref{non-existence}-\ref{T1}, and Section \ref{Conclusion} proves Corollary \ref{Coro}.
\newline	
\textbf{Acknowledgments:} I would like to thank Ignacio McManus for his contributions of images to this work.
		\section{\textbf{Preliminaries}}	\label{Sec:Preliminaries}
In this section, we focus on the equations related to the extrinsic geometry of $\gamma$-translators in $\rr^{n+1}$ with $\gamma:\overline{\Gamma}\to[0,\infty)$ is a fixed curvature function.
\newline

Firstly, we note that the principal curvatures of a hypersurface $\Sigma$ in $\rr^{n+1}$ are the eigenvalues of the shape operator 	$\W_p:T_p\Sigma\to T_p\Sigma$ defined by $\W_p(X)=-\nabla_X\nu$, where $T_p\Sigma$ is the tangent space at  $p\in\Sigma$, $\nu$ is the positively oriented unitary normal vector field of $\Sigma$ in $\rr^{n+1}$. With this orientation, the principal curvatures of a convex graph are non-negative.
\newline

Furthermore, in local coordinates, the shape operator is represented by the matrix $A^i_j=g^{ik}A_{kj}$, where
$g^{ij}$ denotes the coefficients of the inverse of the metric tensor $g=(g_{ij})$ of $\Sigma$, and $A=(A_{ij})$ denotes the coefficients of the second fundamental form of $\Sigma$. We note that Einstein's notation of repeated indexes is used, and will be used throughout this paper as well. To elaborate, the second fundamental form corresponds to the bilinear symmetric form related to the Shape operator, i.e: $A(X,Y)=\pI{-\nabla_X\nu, Y}$. In local coordinate the entries are given by  $A_{ij}=\pI{-\nabla_i\nu,\tau_j}$, where $\set{\tau_i}$ denotes a local frame of  $T_p\Sigma$ and $\nabla_{i}\nu=\nabla_{\tau_i}\nu$.
\newline

Next, we describe how the properties defining a curvature function influence the differential operators in the main equations related to $\gamma$-translators in $\mathbb{R}^{n+1}$:
\begin{enumerate}
	\item Property \ref{a)} implies the existence of a smooth function $$\boldsymbol{\gamma}:\set{A\in\mbox{Sym}(n):\lambda(A)\in\Gamma}\to\rr$$ such that $\boldsymbol{\gamma}(A)=\gamma(\lambda)$ when $A=\mbox{diag}(\set{\lambda_1,\ldots,\lambda_n})$. Here $\mbox{Sym}(n)$ denotes the vector space of real symmetric matrices. 
	\item Property \ref{b)} allows us to write for any diagonal matrix $A=\mbox{diag}(\set{\lambda_1,\ldots,\lambda_n})$ that $\dfrac{\partial\boldsymbol{\gamma}}{\partial A_{ab}}(A) = \delta_{ab}\dfrac{\partial\gamma}{\partial\lambda_a}(\lambda)$, where $\delta_{ab}=1$, if $a=b$, and $\delta_{ab}=0$ otherwise,
	\newline
	Furthermore, if  the principal curvature are simple, i.e., $\lambda_1<\ldots<\lambda_n$, then 
	\begin{align*}
		\dfrac{\partial^2\boldsymbol{\gamma}}{\partial A_{ab}\partial A_{cd}}(A)T_{ab}T_{cd}=\dfrac{\partial^2\gamma}{\partial\lambda_a\partial \lambda_b}(\lambda)T_{aa}T_{bb}+2\sum_{a<b}\dfrac{\frac{\partial\gamma}{\partial\lambda_b}(\lambda)-\frac{\partial\gamma}{\partial\lambda_a}(\lambda)}{\lambda_b-\lambda_a}|T_{ab}|^2,
	\end{align*}
	for every symmetric matrix $T_{ab}$. In particular, if $\gamma$ is convex (resp. concave), the above term has a non-negative (resp. non-positive) sign.
\end{enumerate}

	Throughout this paper, we will abuse notation by using the same symbol for smooth symmetric functions evaluated at both $\lambda\in\Gamma$ and $A\in\set{Sym(n):\lambda(A)\in\Gamma}$.

\begin{lemma}\label{equations}
	Let $p\in\Sigma$ be a $\gamma$-translator, and choose an orthonormal frame of principal directions $\set{\tau_i}_{i=1}^n$ of $T_p\Sigma$, then the following equations holds at $p$:
	\begin{align}
		\label{gamma}
		&\Delta_\gamma\gamma+\nabla_{n+1}\gamma+|A|_\gamma^2 \gamma=0,
		\\
		\label{A_ij}
		&\Delta_\gamma A_{ij}+\partial^2\gamma^{ab;cd}\nabla_iA_{ab}\nabla_jA_{cd}+\nabla_{n+1} A_{ij}+|A|_\gamma^2A_{ij}=0,
		\\ \label{H}
		&\Delta_\gamma H+\partial^2\gamma^{ab;cd}\nabla_i A_{ab}\nabla_i A_{cd}+\nabla_{n+1} H+|A|_\gamma^2H=0,
		\\
		&\Delta_\gamma|A|^2+2\lambda_i\partial^2\gamma^{ab;cd}\nabla_iA_{ab}\nabla_i A_{cd}-2\norm{\nabla A}_\gamma^2+\nabla_{n+1}|A|^2+2|A|_\gamma^2|A|^2=0. 
	\end{align}
	where  
	\begin{align*}
		&\Delta_\gamma=\dfrac{\partial \gamma}{\partial A_{ij}}\nabla_i\nabla_j,\:
		\partial^2\gamma^{ab;cd}\nabla_i A_{ab}\nabla_i A_{cd}=	\dfrac{\partial^2\gamma}{\partial A_{ab}\partial A_{cd}}(A)\nabla_i A_{ab}\nabla_iA_{cd},
		\\
		&\:\norm{X}_\gamma^2=\pI{X,X}_\gamma,
		\pI{X,Y}_\gamma=\dfrac{\partial \gamma}{\partial A_{ij}}X^iY^j, 
	\nabla_{n+1}f=\pI{\nabla f, (e_{n+1})^{\top}}, |A|^2_\gamma=\dfrac{\partial\gamma}{\partial\lambda_i}\lambda_i^2,
	\end{align*}
	and $(\cdot)^\top$ is the orthogonal projection onto $T_p\Sigma$. 
\end{lemma}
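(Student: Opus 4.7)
The plan is to derive all four identities from the translator equation $\gamma(\lambda)=\pI{\nu,e_{n+1}}$ by successive differentiation, using the Codazzi equation, the Ricci identity coupled with the Gauss equation in $\rr^{n+1}$, and Euler's relation $\frac{\partial\gamma}{\partial \lambda_i}\lambda_i=\gamma$ from $1$-homogeneity. I would proceed in the order \eqref{gamma}, \eqref{A_ij}, \eqref{H}, and then the $|A|^2$-identity, since the latter two follow from \eqref{A_ij} by tracing and a Leibniz-rule computation respectively.

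For \eqref{gamma}, first compute $\nabla_i\gamma$ from $\gamma=\pI{\nu,e_{n+1}}$ via Weingarten, obtaining $\nabla_i\gamma=-A_{i}^{\,k}(e_{n+1})_{k}$ where $(e_{n+1})_k$ denotes the frame components of the tangential projection $(e_{n+1})^\top$. Writing $e_{n+1}=(e_{n+1})^\top+\gamma\nu$ and applying the Gauss formula yields $\nabla_i(e_{n+1})_k=\gamma A_{ik}$, so that
\[
\nabla_i\nabla_j\gamma=-\nabla_j A_{ik}(e_{n+1})^k-\gamma A_{ik}A_j^{\,k}.
\]
Contracting with $\frac{\partial\gamma}{\partial A_{ij}}$ and invoking Codazzi rewrites the first term as $-\nabla_{n+1}\gamma$, since $\frac{\partial\gamma}{\partial A_{ij}}\nabla_j A_{ik}=\frac{\partial\gamma}{\partial A_{ij}}\nabla_k A_{ij}=\nabla_k\gamma$. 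Euler's relation in the principal frame identifies the second term with $-\gamma|A|_\gamma^2$, yielding \eqref{gamma}.

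For \eqref{A_ij}, the key input is Simons' identity: applying Codazzi twice together with the Ricci identity and the Gauss equation $R_{aibj}=A_{ab}A_{ij}-A_{aj}A_{ib}$ in $\rr^{n+1}$, one obtains
\[
\nabla_a\nabla_b A_{ij}=\nabla_i\nabla_j A_{ab}+A_{ab}A_i^{\,m}A_{mj}-A_a^{\,m}A_{ib}A_{mj}+A_{aj}A_i^{\,m}A_{bm}-A_a^{\,m}A_{ij}A_{bm}.
\]
Separately, the chain rule gives $\nabla_i\nabla_j\gamma=\partial^2\gamma^{ab;cd}\nabla_i A_{ab}\nabla_j A_{cd}+\frac{\partial\gamma}{\partial A_{ab}}\nabla_i\nabla_j A_{ab}$, which combined with the direct Hessian from the previous step solves for $\frac{\partial\gamma}{\partial A_{ab}}\nabla_i\nabla_j A_{ab}$. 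Substituting both expressions into $\Delta_\gamma A_{ij}=\frac{\partial\gamma}{\partial A_{ab}}\nabla_a\nabla_b A_{ij}$ and evaluating in the principal frame, Euler's relation collapses the four quartic-in-$A$ terms: the first contributes $\gamma\lambda_i^2\delta_{ij}$, the middle two are equal and cancel by sign, and the last contributes $A_{ij}|A|_\gamma^2$. The $\gamma\lambda_i^2\delta_{ij}$ contribution cancels the $-\gamma A_{ik}A_j^{\,k}$ from the direct Hessian, producing \eqref{A_ij}.

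Equation \eqref{H} is immediate by contracting \eqref{A_ij} with $g^{ij}=\delta^{ij}$. The $|A|^2$-identity follows by applying the Leibniz rule $\Delta_\gamma(A_{ij}A^{ij})=2A^{ij}\Delta_\gamma A_{ij}+2\norm{\nabla A}_\gamma^2$ and using \eqref{A_ij}, after observing that $2A^{ij}\partial^2\gamma^{ab;cd}\nabla_i A_{ab}\nabla_j A_{cd}=2\lambda_i\partial^2\gamma^{ab;cd}\nabla_i A_{ab}\nabla_i A_{cd}$ in the principal frame. The main obstacle is the bookkeeping in \eqref{A_ij}: matching the four quartic contractions from Simons' identity against the direct Hessian of $\gamma$ requires the full force of $1$-homogeneity together with the diagonal structure of the principal frame, and this clean cancellation is precisely where hypothesis (b) enters essentially.
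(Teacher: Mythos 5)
Your proposal is correct and follows essentially the same route as the paper's own proof: derive \eqref{gamma} by differentiating $\gamma=\pI{\nu,e_{n+1}}$ twice with Weingarten and Codazzi; derive \eqref{A_ij} by comparing the direct Hessian of $\gamma$ against the chain-rule expansion $\nabla_i\nabla_j\gamma=\partial^2\gamma^{ab;cd}\nabla_iA_{ab}\nabla_jA_{cd}+\partial\gamma^{ab}\nabla_i\nabla_jA_{ab}$ together with the Simons-type commutation identity, letting Euler's relation produce the cancellations in the principal frame (where the $\gamma(A^2)_{ij}$ terms from both Hessian expressions cancel, the two mixed cubic terms cancel each other, and the remaining one gives $|A|_\gamma^2A_{ij}$); and obtain \eqref{H} and the $|A|^2$ equation by tracing and by the Leibniz rule exactly as you indicate. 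The only small slip is a sign in your verbal description of the last quartic contraction, which contributes $-A_{ij}|A|_\gamma^2$ rather than $+A_{ij}|A|_\gamma^2$, but this does not affect the structure of the argument and the final identity comes out with the correct sign.
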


\begin{proof}
Let  $p\in\Sigma$ be fixed, and let us choose an adapted orthonormal frame of principal directions centered at $p\in\Sigma$, denoted by $\set{\tau_i}\subset T_p\Sigma$, i.e., $A^i_j(p)=A_{ij}(p)=\delta_{ij}\lambda_i(p)$.  Then, we have 
	\begin{align*}
		\nabla_i\gamma&=\nabla_i\pI{\nu,e_{n+1}}=\pI{\nabla_i\nu, \tau_l}\pI{\tau_l,e_{n+1}}=-A_{il}\pI{\tau_l,e_{n+1}},
		\\
		\nabla_j\nabla_i\gamma&=-\nabla_jA_{il}\pI{\tau_l,e_{n+1}}-A_{il}\pI{\nabla_j\tau_l,\nu}\pI{\nu,e_{n+1}}=-\nabla_jA_{il}\pI{\tau_l,e_{n+1}}-	A_{il}A_{lj}\gamma,
		\\
		\Delta_\gamma\gamma&=-\pI{\nabla \gamma,e_{n+1}}-|A|_\gamma^2\gamma.
	\end{align*}
For the remainder terms, we observe that
	\begin{align*}
		\nabla_i\gamma=&\dfrac{\partial\gamma}{\partial A_{ab}}\nabla_iA_{ab},
		\\
		\nabla_j\nabla_i\gamma
		=&\partial^2\gamma^{ab;cd}\nabla_iA_{ab}\nabla_jA_{cd}+\partial\gamma^{ab}\nabla_j\nabla_iA_{ab}
		\\
		=&\partial^2\gamma^{ab;cd}\nabla_iA_{ab}\nabla_jA_{cd}
		\\
		&+\partial\gamma^{ab}\left( \nabla_a\nabla_b A_{ji}+A_{ji}A_{al}A_{bl}-A_{jl}A_{ai}A_{lb}+A_{jb}A_{al}A_{li}-A_{jl}A_{ab}A_{li}\right).
	\end{align*}
	Therefore, we see that
	\begin{align*}
		\Delta_\gamma A_{ij}&=-\partial^2\gamma^{ab;cd}\nabla_iA_{ab}\nabla_jA_{cd}-|A|_\gamma^2A_{ij}+\gamma|A|^2+\nabla_i\nabla_j\gamma
		\\
		&=-\partial^2\gamma^{ab;cd}\nabla_iA_{ab}\nabla_jA_{cd}-|A|_\gamma^2A_{ij}-\nabla_{n+1}A_{ij}.		
	\end{align*}
	Finally, the equations for 
	$H$  and $|A|^2$ follow by taking the trace of the above equation and noting that $\nabla_i\nabla_j|A|^2=2A_{pq}\nabla_{i}\nabla_{j}A_{pq}+2\nabla_i A_{pq}\nabla_j A_{pq}$.
\end{proof}

\begin{proposition}
	Let $\Sigma\subset\mathbb{R}^{n+1}$ be a $\gamma$-translator. Then, at  a point $p\in\Sigma$ where the principal curvatures are simple, the following equations hold:
	\begin{align}\label{Dif lambda}
		\nabla \lambda_i=(\nabla A)(\tau_i,\tau_i)=\nabla A_{ii}\mbox{ and }\Delta_\gamma\lambda_i=\Delta_\gamma A_{ii}-2\sum_{i\neq j}\dfrac{\norm{\nabla A_{ij}}_\gamma^2}{\lambda_j-\lambda_i}, 
	\end{align}  
	where $\set{\tau_i}\subset T_p\Sigma$ is an orthonormal frame of principal directions.
\end{proposition}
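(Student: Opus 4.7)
The plan is to exploit the hypothesis of simple eigenvalues to smoothly diagonalise the second fundamental form in a neighbourhood of $p$, and then differentiate twice the scalar identity $\lambda_i = A(\tau_i,\tau_i)$. Since the principal curvatures are pairwise distinct at $p$, the implicit function theorem (or the smooth spectral calculus for families of symmetric matrices with simple spectrum) produces smooth functions $\lambda_i$ and a smooth orthonormal frame of principal directions $\{\tau_i\}$ defined in an open neighbourhood of $p$ on which $A(\tau_i,\tau_j) = \delta_{ij}\lambda_i$.

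First, I would obtain the gradient formula. Differentiating $\lambda_i = A(\tau_i,\tau_i)$ along an arbitrary tangent direction and using the Leibniz rule for the covariant derivative of the $(0,2)$-tensor $A$, one gets $\nabla_k\lambda_i = (\nabla_k A)(\tau_i,\tau_i) + 2A(\nabla_k\tau_i,\tau_i)$. At the point $p$ one has $A(\nabla_k\tau_i,\tau_i) = \lambda_i\langle\nabla_k\tau_i,\tau_i\rangle$, and since $|\tau_i|\equiv 1$ the factor $\langle\nabla_k\tau_i,\tau_i\rangle$ vanishes. This yields $\nabla\lambda_i=(\nabla A)(\tau_i,\tau_i)=\nabla A_{ii}$, which is the first identity.

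Next, I would iterate the Leibniz rule to compute $\nabla_j\nabla_k\lambda_i$. Expanding produces five terms: $(\nabla_j\nabla_k A)(\tau_i,\tau_i)$, two mixed terms of the form $(\nabla_\bullet A)(\nabla_\bullet\tau_i,\tau_i)$, and two terms involving $\nabla\tau_i$ and $\nabla\nabla\tau_i$. The key auxiliary identity is that $\langle\nabla_k\tau_i,\tau_l\rangle = (\lambda_i-\lambda_l)^{-1}\,\nabla_k A_{il}$ whenever $l\neq i$; this comes from differentiating the off-diagonal relation $A(\tau_i,\tau_l)=0$ at $p$, using once more that $\langle\nabla_k\tau_i,\tau_i\rangle=0$ and $\langle\nabla_k\tau_i,\tau_l\rangle=-\langle\tau_i,\nabla_k\tau_l\rangle$. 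Together with the second derivative of $|\tau_i|^2=1$, which gives $\langle\nabla_j\nabla_k\tau_i,\tau_i\rangle=-\langle\nabla_k\tau_i,\nabla_j\tau_i\rangle$, the terms with $\nabla\nabla\tau_i$ combine with the pure $\nabla\tau_i\otimes\nabla\tau_i$ term to collapse into a single sum over $l\neq i$ of $(\lambda_l-\lambda_i)^{-1}\,\nabla_j A_{il}\,\nabla_k A_{il}$.

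Collecting the contributions, I expect the intermediate formula
\[
\nabla_j\nabla_k\lambda_i \;=\; \nabla_j\nabla_k A_{ii} \;+\; 2\sum_{l\neq i}\frac{\nabla_j A_{il}\,\nabla_k A_{il}}{\lambda_i-\lambda_l}.
\]
Finally, contracting with $\partial\gamma/\partial A_{jk}$, which in the principal frame at $p$ is diagonal by property \ref{b)} (cf.\ the preliminaries), turns the quadratic form $\nabla_j A_{il}\nabla_k A_{il}$ into $\|\nabla A_{il}\|_\gamma^2$ and yields precisely the stated expression for $\Delta_\gamma\lambda_i$. The main obstacles are purely bookkeeping: tracking the Leibniz terms carefully, using the two consequences of $|\tau_i|=1$ (first and second order), and justifying the passage to smooth principal directions via simplicity of the eigenvalues; no deeper analytic input beyond these standard facts is required.
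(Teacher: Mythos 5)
Your argument is correct and is exactly the standard proof (the one in Xie--Yu, which the paper simply cites as a ``straightforward adaptation''): smoothly diagonalise using simplicity of the spectrum, differentiate $\lambda_i=A(\tau_i,\tau_i)$ twice, use $\langle\nabla_k\tau_i,\tau_i\rangle=0$ and $\langle\nabla_k\tau_i,\tau_l\rangle=(\lambda_i-\lambda_l)^{-1}\nabla_k A_{il}$ to collapse the Leibniz terms, and contract against $\partial\gamma/\partial A_{jk}$, which is diagonal in the principal frame. Your intermediate identity $\nabla_j\nabla_k\lambda_i=\nabla_j\nabla_k A_{ii}+2\sum_{l\neq i}(\lambda_i-\lambda_l)^{-1}\nabla_j A_{il}\nabla_k A_{il}$ reproduces the stated formula after noting $(\lambda_i-\lambda_l)^{-1}=-(\lambda_l-\lambda_i)^{-1}$.
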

\begin{proof}
	The proof is a straightforward adaptation of \cite[Lemma 2.2]{xie2023convexity}. 
\end{proof}

\begin{definition}
Let $\gamma:\overline{\Gamma}\to[0,\infty)$ be a curvature function. We say that $\gamma$ is uniformly eliptic if there exists a positive constant $C=C(n,\gamma,\Gamma)$ such that 
	\begin{align*}
		&C^{-1}\delta_{ab}\dfrac{\partial\gamma}{\partial\lambda_a}(\lambda)\leq \dfrac{\partial\gamma}{\partial{A_{ab}}}(A)\leq C\delta_{ab}\dfrac{\partial\gamma}{\partial\lambda_a}(\lambda).
	\end{align*}
\end{definition}

\begin{lemma}\label{Omori-Yau}
	Let $\gamma:\overline{\Gamma}\to[0,\infty)$ be a uniformly elliptic curvature function. Let $\Sigma$ be a complete $\gamma$-translator with principal curvatures belonging to $\Gamma$ and with $|A| \leq \tilde{C}$ for some constant $\tilde{C} > 0$. Then, for a smooth function $f:\Sigma\to\rr$ that is bounded from below, there exists a sequence $p_m\in\Sigma $ satisfying the Omori-Yau maximum principle related to $\Delta_\gamma$, i.e.,
	\begin{align}\label{Omo}
		f(p_m)\to\inf_{\Sigma}f,\:|\nabla f(p_m)|\to0\mbox{ and }\Delta_\gamma f(p_m)\to\delta\geq 0 \:, \mbox{ as }m\to\infty. 
	\end{align}
	The theorem also holds if we replace the assumption that $f$ is smooth with the assumption that $f$ is smooth in $\{f < a\}$ for some $a>\inf\limits_\Sigma f$.
\end{lemma}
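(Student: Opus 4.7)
My plan is to reduce the statement to the classical Omori--Yau maximum principle on $\Sigma$ via Yau's barrier argument, using the uniform ellipticity of $\gamma$ to pass from $\Delta$ to $\Delta_\gamma$. The starting observation is that $|A|\leq\tilde{C}$ forces, via the Gauss equation $R_{ijkl}=A_{ik}A_{jl}-A_{il}A_{jk}$, a uniform bound on the sectional (and hence Ricci) curvature of $\Sigma$. Completeness together with this bound is precisely the bounded-geometry setup in which Yau's barrier method succeeds, so the main task is to construct a barrier whose $\Delta_\gamma$ is controlled.

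I would follow Yau's proof directly. Fix $p_0\in\Sigma$, let $r(p)=d_\Sigma(p_0,p)$, and set $\psi=\sqrt{1+r^2}$. By the Hessian comparison theorem under the sectional curvature bound (with Calabi's trick to smooth across the cut locus), $\psi$ can be arranged to be $C^2$ with $|\nabla\psi|\leq C_1$ and $\nabla^2\psi\leq C_2\,g$ pointwise. The uniform ellipticity hypothesis then yields
\begin{align*}
\Delta_\gamma\psi=\dfrac{\partial\gamma}{\partial A_{ab}}\nabla_a\nabla_b\psi\leq C_2\sum_a\dfrac{\partial\gamma}{\partial\lambda_a}(\lambda)\leq C_3,
\end{align*}
where the last inequality uses that $|A|\leq\tilde{C}$, together with property (d) of $\gamma$, confines the principal curvatures to a compact subset of $\Gamma$ on which the $0$-homogeneous quantities $\partial_{\lambda_a}\gamma$ remain bounded.

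With this barrier in hand the argument is standard. For each integer $m\geq 1$, set $f_m=f+\tfrac{1}{m}\psi$; since $f$ is bounded below and $\psi$ is proper, $f_m$ attains its infimum at some $p_m\in\Sigma$. The first- and second-derivative tests give
\begin{align*}
\nabla f(p_m)=-\tfrac{1}{m}\nabla\psi(p_m),\qquad \Delta_\gamma f(p_m)\geq -\tfrac{1}{m}\Delta_\gamma\psi(p_m)\geq -\tfrac{C_3}{m}.
\end{align*}
The sandwich $f(p_m)\leq f_m(p_m)\leq\inf_\Sigma f+\tfrac{1}{m}\psi(q)$ for any fixed $q\in\Sigma$ forces $f(p_m)\to\inf_\Sigma f$, while the display above yields $|\nabla f(p_m)|\to 0$ and $\liminf_m\Delta_\gamma f(p_m)\geq 0$; passing to a subsequence produces the $\delta\geq 0$ in \eqref{Omo}. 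The final clause is handled by noting that for $m$ large the minimizer of $f_m$ is forced into $\{f<a\}$, where $f$ is smooth and the above computations remain valid.

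The main obstacle will be justifying the bound $\Delta_\gamma\psi\leq C_3$, since uniform ellipticity controls $\gamma^{ab}$ only up to the factors $\partial_{\lambda_a}\gamma$, and these are $0$-homogeneous, not bounded a priori on all of $\overline{\Gamma}$. The argument above is clean provided the principal curvatures stay in a compact subset of the open cone $\Gamma$, which follows from $|A|\leq\tilde{C}$ together with property (d); should the curvatures be allowed to accumulate on $\partial\Gamma$, one would replace $\psi$ by a slower-growing exhaustion (e.g.\ a smoothing of $\log(1+r)$) and carry out the same perturbation scheme, or appeal to a general Omori--Yau type theorem for second order elliptic operators on complete manifolds of bounded geometry.
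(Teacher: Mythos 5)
Your proposal is mathematically sound, but it takes a more self-contained route than the paper does. The paper disposes of this lemma in a few lines by citing \cite[Corollary 3]{alias2013hypersurfaces}, which is an Omori--Yau principle for trace-type second-order operators $L=\mathrm{tr}(P\circ\nabla^2)$; the paper only needs to verify the hypotheses of that result, namely that the tensor $P^i_j=\partial\gamma/\partial A^i_j$ has bounded trace (from uniform ellipticity) and that the Ricci curvature of $\Sigma$ is bounded below (from the Gauss equation $R_{ij}=HA^i_j-A^i_kA^k_j$ together with $|A|\leq\tilde{C}$). You instead reprove the underlying result directly via Yau's barrier construction: build the proper exhaustion $\psi=\sqrt{1+r^2}$, control $\nabla^2\psi$ by Hessian comparison under the sectional-curvature bound, perturb $f$ by $\frac1m\psi$, and read off the Omori--Yau sequence from the first- and second-derivative tests. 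Both arguments rest on the same two ingredients---bounded geometry from $|A|\leq\tilde{C}$ and bounded symbol trace from uniform ellipticity---so they buy the same thing; yours is longer but does not outsource the core estimate, while the paper's is shorter at the cost of relying on a black-boxed reference.

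One small inaccuracy to flag: you attribute the bound on $\sum_a\partial\gamma/\partial\lambda_a$ to ``$|A|\leq\tilde{C}$ together with property (d) of $\gamma$ confining the principal curvatures to a compact subset of $\Gamma$.'' This is not quite right---$|A|\leq\tilde{C}$ bounds $\lambda$ away from infinity but not away from $\partial\Gamma$, and property (d) only tells you $\gamma$ vanishes there; it does not prevent $\lambda$ from approaching $\partial\Gamma$ (indeed $\gamma=\langle\nu,e_{n+1}\rangle$ can approach $0$). The correct source of the bound is the uniform ellipticity hypothesis itself, which (as used in Lemmas \ref{Elliptic estimate n=2} and \ref{Eliptic estimate}) is precisely what guarantees $C^{-1}\leq\partial\gamma/\partial\lambda_a\leq C$ and hence a uniform bound on $\mathrm{tr}(\gamma^{ab})$ over $\Sigma$. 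Your closing caveat shows you sensed the issue; replacing the appeal to property (d) by the uniform ellipticity assumption closes the gap cleanly. The handling of the final clause---forcing the minimizer of $f+\frac1m\psi$ into $\{f<a\}$ for $m$ large---is correct.
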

\begin{proof}
	This is a direct consequence of \cite[Corollary 3]{alias2013hypersurfaces}. Indeed, we choose $P:T\Sigma\to T\Sigma$, where in a local moving frame the coefficients are given by $\left(\frac{\partial\gamma}{\partial A^i_j}\right)$, where $A^i_j$ is the local expression of the coefficients of the shape operator. Furthermore, the Ricci condition is satisfied by the Gauss-Codazzi equations. Specifically, we have that the Ricci curvature satisfies $R_{ij}=HA^i_{j}-A^i_{k}A^k_{j}$ and $|A|\leq \tilde{C}$. Finally, the limits in \eqref{Omo} hold since  $\gamma$ is a uniformly elliptic curvature function.
\end{proof}

\begin{lemma}\label{Elliptic estimate n=2}
	Let $\gamma:\overline{\Gamma}\subset\rr^2\to[0,\infty)$ be a curvature function. Then, for every $\alpha<1$, the restriction of $\gamma$ onto $\Gamma_{\alpha}=\set{\lambda\in\Gamma: \alpha |A|\leq H}$ is uniformly elliptic.		
\end{lemma}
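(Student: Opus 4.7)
The plan is to reduce the uniform ellipticity estimate to a compactness statement by exploiting the scaling invariance of the derivatives of $\gamma$.

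First, I would note that since $\gamma$ is positively $1$-homogeneous (property (b)), each partial derivative $\partial\gamma/\partial\lambda_i$ is positively $0$-homogeneous, and so is the matrix-valued derivative $\partial\gamma/\partial A_{ab}$ viewed as a function of $A$. Consequently, to verify the uniform ellipticity inequality on $\Gamma_\alpha$ it suffices to establish uniform positive bounds for $\partial\gamma/\partial\lambda_i$ on the slice $\Gamma_\alpha\cap\{|\lambda|=1\}$; scaling then propagates these bounds to the entire cone.

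Second, for $n=2$ I would describe $\Gamma_\alpha\cap\{|\lambda|=1\}$ explicitly. Parametrizing $\lambda=(\cos\theta,\sin\theta)$, the defining inequality $\alpha|\lambda|\leq H=\lambda_1+\lambda_2$ becomes $\alpha\leq\sqrt{2}\cos(\theta-\pi/4)$, which confines $\theta$ to a closed interval $I_\alpha\subset(-\pi/4,3\pi/4)$; the inclusion is strict since $\alpha<1<\sqrt{2}$. The slice is therefore a compact arc of the unit circle on which $H\geq\alpha>0$, uniformly bounded away from the hyperplane $\{H=0\}$.

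Third, I would verify that this compact arc is contained in the open cone $\Gamma$. Its interior points lie in $\Gamma$ by the definition of $\Gamma_\alpha$, and the two endpoints, where $\alpha|\lambda|=H$, lie in $\overline{\Gamma}$ by continuity. Using that $\Gamma$ is a symmetric open cone containing $\Gamma_+$ and that the endpoints are mirror images across the diagonal and sit in $\{H>0\}$, one argues that the condition $\alpha<1$ ensures they remain in the interior of $\Gamma$. Since the derivatives $\partial\gamma/\partial\lambda_i$ are continuous and strictly positive on this compact subset of $\Gamma$ (properties (a), (c)), they attain a uniform positive lower bound $c_\alpha$ and a finite upper bound $C_\alpha$; by $0$-homogeneity the bounds persist on all of $\Gamma_\alpha$, furnishing the constant $C=C(n,\gamma,\Gamma,\alpha)$ that implements the uniform ellipticity inequality.

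The main obstacle I expect lies in step three: ensuring that the two endpoints of the compact arc $\Gamma_\alpha\cap\{|\lambda|=1\}$ remain inside $\Gamma$ rather than on $\partial\Gamma$. The dimensional restriction $n=2$ makes this manageable because the endpoints admit an explicit one-parameter description, symmetric about the diagonal; for $n\geq3$ the corresponding slice is a higher-dimensional manifold and such geometric control is harder to obtain, which is presumably why Theorem~\ref{Generalization of SX} for $n\geq 3$ is stated on the narrower cone $\tilde{\Gamma}$ rather than on $\Gamma_\alpha$.
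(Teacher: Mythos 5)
Since the paper establishes this lemma only by citing Andrews--Langford--McCoy, your argument is an independent attempt, and Steps 1--2 are sound: by $0$-homogeneity of $\partial\gamma/\partial\lambda_i$ it suffices to bound those derivatives on the slice $\Gamma_\alpha\cap\Sp^1$, which for $n=2$ is a closed circular arc. The gap is exactly the one you flag as ``the main obstacle'' but then wave past: nothing in the hypothesis $\alpha<1$ forces $\overline{\Gamma_\alpha\cap\Sp^1}\subset\Gamma$. Concretely, take $\Gamma=\Gamma_+$ and $\gamma(\lambda)=\sqrt{\lambda_1\lambda_2}$, which satisfies properties \ref{a)}--\ref{d}. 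On $\Gamma_+$ one has $H>|\lambda|$, so $\Gamma_\alpha=\Gamma_+$ for every $\alpha\le 1$; the closure of the slice meets $\partial\Gamma_+$ at $(1,0)$ and $(0,1)$, where $\partial\gamma/\partial\lambda_1=\tfrac12\sqrt{\lambda_2/\lambda_1}\to\infty$, so the restriction is not uniformly elliptic. Adding the hypothesis $(0,1)\in\Gamma$ does not repair this for all $\alpha<1$ either: for $\gamma=\sqrt{(\lambda_1+\epsilon\lambda_2)(\epsilon\lambda_1+\lambda_2)}$ on $\Gamma=\set{\lambda_1+\epsilon\lambda_2>0,\ \epsilon\lambda_1+\lambda_2>0}$ the closure of the slice again reaches $\partial\Gamma$ once $\alpha\le(1-\epsilon)/\sqrt{1+\epsilon^2}$.

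What actually makes Step 3 go through is the convexity or concavity hypothesis, which the lemma as isolated does not record but the cited Andrews--Langford--McCoy result assumes. For \emph{convex} $\gamma$, Euler's relation together with the supporting-hyperplane inequality at $(1,1)$ gives $\gamma(\lambda)\ge a\,H$ with $a=\partial_1\gamma(1,1)>0$; combined with $\gamma|_{\partial\Gamma}=0$ this forces $\partial\Gamma\subset\set{H\le 0}$, hence $\Gamma\supset\set{H>0}$, and then $\overline{\Gamma_\alpha\cap\Sp^1}\subset\set{H\ge\alpha}\cap\Sp^1$ is compactly contained in $\Gamma$ for every $\alpha>0$, at which point your continuity/compactness conclusion is correct. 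For \emph{concave} $\gamma$ the reverse inequality $\gamma\le CH$ gives $\Gamma\subset\set{H>0}$, and compact containment of $\Gamma_\alpha$ in $\Gamma$ is not automatic over all of $\alpha\in(0,1)$; it needs $\alpha$ bounded below in terms of the opening angle of $\Gamma$. So the reduction you propose is the right one, but the crucial inclusion in Step 3 must be \emph{derived} from the sign of $\mathrm{Hess}(\gamma)$ and the geometry of $\Gamma$, not asserted from $\alpha<1$ alone.
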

\begin{proof}
We refer the reader to \cite[Corollary 2.4]{andrews2015convexity} for a proof of this result.
\end{proof}

\begin{definition}\label{def}
Let $\gamma:\overline{\Gamma}\to[0,\infty)$ be a curvature function. We say that $\gamma(\lambda)$ is strictly convex (resp. concave) in off-radial directions if 
\begin{align*}
\pI{\mbox{Hess}(\gamma)(\lambda)\xi,\xi}\leq 0 \:(\mbox{resp.}\geq 0),
\end{align*} 
for all $\lambda\in\Gamma\mbox{ and }\xi\in\rr^n$, with equality holding if, and only if, $\lambda$ is a scalar multiple of $\xi$. 
\end{definition}

\begin{lemma}\label{Eliptic estimate}
	Let $\gamma:\overline{\Gamma}\to [0,\infty)$ be a strictly convex/concave in off-radial direction curvature function. Let $\Gamma'$ be a symmetric closed cone such that is compactly supported in $\set{\lambda\in\Gamma:\lambda_1<0}$, i.e., $\overline{\Gamma'}\cap\Sp^{n-1}$ is compact in $\set{\lambda\in\Gamma:\lambda_1<0}$. Then, $\gamma$ is uniformly elliptic in $\Gamma'$ and for the second order terms it follows
	\begin{itemize}
		\item $\partial^2\gamma^{ab,cd}(A)T_{iab}T_{icd}\leq -C\dfrac{|T|^2}{\mbox{Tr}(A)}$ when $\gamma$ is strictly concave. 
		
		\item $\partial^2\gamma^{ab,cd}(A)T_{iab}T_{icd}\geq C\dfrac{|T|^2}{\mbox{Tr}(A)}$ when $\gamma$ is strictly convex. 
	\end{itemize}
	where $A$ is a diagonal matrix with eigenvalues $\lambda\in\Gamma'$ and $T_{iab}$ is a totally symmetric tensor.
\end{lemma}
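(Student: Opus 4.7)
The plan is to reduce every assertion to a compactness argument on the unit sphere via the homogeneity of $\gamma$, and then to extract the quadratic-form bound directly from the decomposition of $\partial^2\boldsymbol{\gamma}$ recorded in property \ref{b)} of the preliminaries.

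Since $\gamma$ is $1$-homogeneous, $\partial\gamma/\partial\lambda_i$ is $0$-homogeneous, while $\partial^2\gamma/\partial\lambda_a\partial\lambda_b$ and the difference quotients $(\partial_b\gamma-\partial_a\gamma)/(\lambda_b-\lambda_a)$ are $(-1)$-homogeneous. Together with the $1$-homogeneity of $\mbox{Tr}(A)$ in $\lambda$, this makes each of the asserted inequalities scale-invariant, so it suffices to verify them on the compact set $K:=\overline{\Gamma'}\cap\Sp^{n-1}\subset\{\lambda\in\Gamma:\lambda_1<0\}$. Uniform ellipticity follows at once: property \ref{c)} gives $\partial\gamma/\partial\lambda_i>0$ pointwise on $\Gamma$, so by continuity and compactness there are constants $0<c\leq C$ with $c\leq\partial\gamma/\partial\lambda_i\leq C$ on $K$, and the identity $\partial\boldsymbol{\gamma}/\partial A_{ab}=\delta_{ab}\,\partial\gamma/\partial\lambda_a$ valid at diagonal matrices, extended by orthogonal invariance, yields the claimed matrix bound.

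For the second-order estimate, fix a totally symmetric tensor $T$ and apply the formula from property \ref{b)} for each index $i$ to obtain
\begin{align*}
\partial^2\gamma^{ab;cd}(A)T_{iab}T_{icd}=\pI{\mbox{Hess}(\gamma)(\lambda)v_i,v_i}+2\sum_{a<b}\frac{\partial_b\gamma-\partial_a\gamma}{\lambda_b-\lambda_a}|T_{iab}|^2,
\end{align*}
where $v_i:=(T_{i11},\ldots,T_{inn})\in\rr^n$. Convexity (resp.\ concavity) of $\gamma$ forces every difference-quotient weight to have a uniform sign, and by $(-1)$-homogeneity plus compactness of $K$ its modulus is comparable to $1/|\mbox{Tr}(A)|$. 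The Hessian term carries the essential information: Euler's identity applied twice gives $\mbox{Hess}(\gamma)(\lambda)\lambda=0$, and the strict off-radial hypothesis combined with continuity on $K$ produces a uniform $\kappa>0$ such that $\pI{\mbox{Hess}(\gamma)(\lambda)\xi,\xi}\geq\kappa|\xi-\Pi_\lambda\xi|^2$ in the convex case (with the reverse inequality and opposite sign in the concave case) for all $\lambda\in K$ and $\xi\in\rr^n$, where $\Pi_\lambda$ denotes orthogonal projection onto the line $\rr\lambda$.

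The main obstacle is closing the gap along the radial direction, where the Hessian piece is identically zero yet $|T|^2$ still receives a contribution. I would exploit the total symmetry of $T$ as follows: the identity $T_{iaa}=T_{aai}$ shows that the matrix $V_{ia}:=T_{iaa}$ is symmetric, and its off-diagonal entries $V_{ab}$ with $a\neq b$ are precisely the entries $T_{abb}$ appearing in the off-diagonal sum for the slice $i=b$. Writing $v_i=\alpha_i\lambda+w_i$ with $w_i\perp\lambda$, the symmetry of $V$ forces the system $\{w_i=0:i=1,\ldots,n\}$ to imply $V=\beta\lambda\lambda^{T}$ for a single scalar $\beta$, which in turn lets one control the radial coefficients $\alpha_i$ linearly by the off-diagonal entries of $V$. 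Hence the radial contribution to $|T|^2$ is absorbed into the off-diagonal sum, which the difference-quotient weights already control at rate $1/|\mbox{Tr}(A)|$. Combining the Hessian bound on the off-radial part with the off-diagonal bound on the radial part, summing over $i$, and gathering constants by compactness of $K$ produces the advertised inequality $\pm C|T|^2/\mbox{Tr}(A)$.
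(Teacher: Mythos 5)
The paper does not give its own argument for Lemma \ref{Eliptic estimate}; it simply refers to \cite{lynch2020convexity} for the concave case and states that the convex case is analogous, so a line-by-line comparison with the paper's proof is not possible. Your overall strategy (reduce by homogeneity to the compact slice $K=\overline{\Gamma'}\cap\Sp^{n-1}$, split $\partial^2\boldsymbol{\gamma}$ into the Hessian piece and the difference-quotient piece, observe that the Hessian annihilates the radial direction, and isolate the purely diagonal contributions as the potential gap) is the right shape of argument, and the uniform-ellipticity part is fine. The mechanism you use to close the radial gap, however, is not. You assert that the matrix $V_{ia}:=T_{iaa}$ is symmetric because $T_{iaa}=T_{aai}$. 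That identity is a tautology: total symmetry of $T$ gives $T_{iaa}=T_{aia}=T_{aai}$, but it says nothing that relates $T_{iaa}$ to $T_{aii}$; these are values of $T$ on the distinct index multisets $\set{i,a,a}$ and $\set{a,i,i}$. So $V$ is not symmetric in general, the inference that $\set{w_i=0}$ forces $V=\beta\lambda\lambda^{T}$ does not follow, and the claimed linear control of the radial coefficients $\alpha_i$ by the off-diagonal entries of $V$ collapses.

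The entries that genuinely escape both your pieces at first glance are the purely diagonal ones $T_{iii}$: they never appear as an off-diagonal entry of any slice, and they feed directly into the radial component of $v_i$. What actually controls them is simpler than the route you attempted. For $a\neq i$, $T_{iaa}$ is the $(i,a)$ off-diagonal entry of the $a$-th slice, hence controlled by the difference-quotient sum, once one checks that the weights $\frac{\partial_b\gamma-\partial_a\gamma}{\lambda_b-\lambda_a}$ are uniformly bounded away from zero on $K$ (by Rolle plus strict off-radial definiteness when $\lambda_a\neq\lambda_b$, and by the limiting value when $\lambda_a=\lambda_b$). This gives $v_i=T_{iii}e_i+u_i$ with $u_i$ controlled. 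Because $\overline{\Gamma'}\cap\Sp^{n-1}$ is compact in the open set $\set{\lambda\in\Gamma:\lambda_1<0}$, the point $\lambda/|\lambda|$ is uniformly bounded away from every coordinate axis, so $|e_i-\Pi_\lambda e_i|\geq c_0>0$ on $K$. The Hessian term then bounds $|w_i|=|v_i-\Pi_\lambda v_i|\geq|T_{iii}|\,|e_i-\Pi_\lambda e_i|-|u_i|$, which is what controls $T_{iii}$. Finally, note that the second-derivative formula you quote from property \ref{b)} is stated in the preliminaries only for simple eigenvalues; to run the compactness argument over all of $K$, you need to justify the continuity (and definite sign) of the difference-quotient terms at points where eigenvalues collide, which your write-up leaves implicit.
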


\begin{proof}
We refer the reader to \cite{lynch2020convexity} for a proof of the concave case. The proof for the convex case follows analogously from the concave case.
\end{proof}
		
	\section{\textbf{Proof of Theorem \ref{Generalization of SX} for $n=2$}} \label{Sec Generalization of SX}
	In this section we will prove Thm. \ref{Generalization of SX} for $n=2$. Therefore, we assume that $\Sigma\subset\rr^3$ is  a complete $\gamma$-translator with 	$\lambda\in\Gamma_\alpha=\set{\lambda\in\Gamma: \alpha|A|\leq  H}$ with $\alpha\in (0,1)$. In addition, through this section we label the principal curvatures as $\lambda_1\leq\lambda_2$. 
	\newline
	
	Then, by Lemma \ref{Elliptic estimate n=2}, $\gamma$ is an uniformly elliptic curvature function, i.e., there exists a positive constant $C=C(\alpha,\gamma,\Gamma)$ such that $C^{-1}H\leq \gamma\leq CH$. This implies that $\Sigma$ has uniformy bounded second fundamental form,  since $\gamma=\pI{\nu,e_{n+1}}\leq 1$ and $|A|\leq C\alpha^{-1}\gamma$ in $\Sigma$.
	\newline
	
Moreover,  since $\alpha<1$, we have
\begin{align}\label{c(a)}
-1<c(\alpha)=\alpha^2-1<\dfrac{\lambda_1}{\lambda_2}<0,
\end{align}
 for every  $p\in\Sigma^{-}=\set{p\in\Sigma:\lambda_1(p)<0}$.
\newline	
	
We begin by proving Thm. \ref{Generalization of SX} for the case where $\gamma:\overline{\Gamma}\to[0,\infty)$ is a concave curvature function with $(0,1)\in\Gamma$. For this purpose, we consider the function $h:\Sigma\to(0,\infty)$ given by 
	\begin{align*}
		h(p)&=\dfrac{\gamma(\lambda)}{\lambda_2}=\gamma\left(\frac{\lambda_1}{\lambda_2},1\right).
	\end{align*}
We note that $h(p)$ is smooth in the open set  $\Sigma^{-}$, as it does not contain any umbilical points of $\Sigma$. Moreover, since $\gamma$ is strictly increasing in each argument, $h(p)\in (\gamma(c(\alpha),1),\gamma(1,1)]$.
\newline

Subsequently, we may write
\begin{align*}
	\Sigma^-=\set{p\in\Sigma: h(p)\in\left(\gamma(c(\alpha),1),\gamma(0,1)\right)}.
\end{align*}
Therefore, the convexity of $\Sigma$ will follow by establishing that
\begin{align*}
	I=\inf\limits_{p\in\Sigma} h(p)\geq \gamma(0,1).
\end{align*}
In the following lemmas, we will argue by contradiction that  $I\in (\gamma(c(\alpha),1),\gamma(0,1))$ cannot occur.
	
	\begin{lemma}\label{Step 1}
	The function $h(p)$ cannot attain an interior minimum in $\Sigma^{-}$.
	\end{lemma}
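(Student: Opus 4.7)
The plan is to recast $h$ as a subsolution to a linear, uniformly elliptic operator on $\Sigma^-$ and then invoke the strong minimum principle. First, the standing hypothesis $\alpha>0$ forces $H=\lambda_1+\lambda_2>0$, so on $\Sigma^-$ one has $\lambda_2>0>\lambda_1$ and the principal curvatures are simple at every point. This lets me choose a smooth local orthonormal frame of principal directions near any $p_0\in\Sigma^-$ and regard $\lambda_1,\lambda_2$ as smooth functions there.

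Next, I would compute $\Delta_\gamma h$ by the quotient rule. Inserting equation \eqref{gamma} for $\Delta_\gamma\gamma$, and equation \eqref{A_ij} combined with \eqref{Dif lambda} for $\Delta_\gamma\lambda_2$, and then rewriting the gradient--gradient cross terms via the identity $\nabla\gamma=\lambda_2\nabla h+h\nabla\lambda_2$ (which is immediate from $h=\gamma/\lambda_2$), the $|A|_\gamma^2\gamma$ contributions cancel, the $\norm{\nabla\lambda_2}_\gamma^2$ contributions cancel, and the $\nabla_{n+1}$-derivatives collapse into $-\nabla_{n+1}h$, leaving the clean identity
\[
\Delta_\gamma h + \nabla_{n+1}h + \frac{2}{\lambda_2}\pI{\nabla h,\nabla\lambda_2}_\gamma = \frac{\gamma}{\lambda_2^2}\,\partial^2\gamma^{ab;cd}\nabla_2 A_{ab}\nabla_2 A_{cd} - \frac{2\gamma\,\norm{\nabla A_{12}}_\gamma^2}{\lambda_2^2(\lambda_2-\lambda_1)}.
\]
Since $\gamma$ is concave the first term on the right is non-positive, and since $\lambda_2>\lambda_1$ the second term is non-positive as well. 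Writing $L:=\Delta_\gamma+\nabla_{n+1}+\frac{2}{\lambda_2}\pI{\nabla\lambda_2,\nabla\cdot}_\gamma$, Lemma \ref{Elliptic estimate n=2} guarantees that $L$ is uniformly elliptic, and the above identity reads $Lh\leq 0$.

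If $h$ attained a local minimum at some $p_0\in\Sigma^-$, the strong minimum principle for $L$ would force $h$ to be constant on the connected component of $\Sigma^-$ containing $p_0$. I would then rule out the constant case as follows: constancy of $h=\gamma(\lambda_1/\lambda_2,1)$, together with the strict monotonicity of $\gamma$, forces $\lambda_1/\lambda_2\equiv k$ for some constant $k\in(c(\alpha),0)$. Returning to the displayed identity, whose right-hand side must now vanish, the equality case yields simultaneously $\partial^2\gamma^{ab;cd}\nabla_2 A_{ab}\nabla_2 A_{cd}\equiv 0$ and $\nabla A_{12}\equiv 0$. The Codazzi identities $\nabla_k A_{ij}=\nabla_i A_{jk}$ then give $\nabla_2 A_{11}=\nabla_1 A_{12}=0$ and $\nabla_1 A_{22}=\nabla_2 A_{12}=0$, while $\lambda_1=k\lambda_2$ yields $\nabla A_{11}=k\nabla A_{22}$; together these force $\nabla A\equiv 0$ on the component. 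Thus the component has parallel shape operator and is a piece of a plane, a round sphere, or a right circular cylinder, each of which contradicts the simultaneous conditions $\lambda_1(p_0)<0<\lambda_2(p_0)$ and $\gamma=\pI{\nu,e_{n+1}}>0$.

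The main obstacle I anticipate is the bookkeeping in the second step: one has to verify that each cancellation really occurs (most delicately the $|A|_\gamma^2$ and $\norm{\nabla\lambda_2}_\gamma^2$ cancellations), and then package the surviving first-order terms as the drift of a linear elliptic operator. A secondary technical point is that the strong minimum principle by itself only yields constancy of $h$; extracting $\nabla A\equiv 0$ from that constancy requires the equality-case analysis above, and it is this step (rather than the elliptic inequality) that ultimately produces the contradiction.
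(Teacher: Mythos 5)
Your proposal is correct and follows essentially the same route as the paper: derive the elliptic inequality $Lh\le 0$ (your displayed identity is the paper's Eq.~\eqref{h} after multiplying the parenthesized bracket by $h/\lambda_2=\gamma/\lambda_2^2$), invoke the strong minimum principle to force $h$ constant, and then extract a contradiction from the equality case $\norm{\nabla A_{12}}_\gamma^2\equiv 0$. The only cosmetic difference is the last step: the paper observes directly that $\nabla A_{12}\equiv 0$ and $\lambda_1\ne\lambda_2$ make the principal frame parallel, hence the induced metric is flat and $\lambda_1\lambda_2=0$, whereas you additionally use the constancy of $\lambda_1/\lambda_2$ to upgrade to $\nabla A\equiv 0$ and then quote the classification of surfaces with parallel second fundamental form; both reach the same contradiction with $\lambda_1(p_0)<0<\lambda_2(p_0)$.
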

	
	\begin{proof}
		We proceed by contradiction, assuming that there exists a point $p_0\in \Sigma^{-}$ such that  $h(p_0)=I$. Then, by choosing an adapted frame of principal directions $\tau_i$ around a fixed point $p\in\Sigma^{-}$, the equations in Lemma \ref{equations} hold. Therefore, we have the following equations related to $h$:
		\begin{align}\label{gradient h}
			\nabla h=&\dfrac{\nabla \gamma}{\lambda_2}-h\dfrac{\nabla \lambda_2}{\lambda_2},
		\end{align}	
		\begin{align*}	
			\Delta_\gamma h=&\dfrac{\Delta_\gamma \gamma}{\lambda_2}-\pI{\dfrac{\nabla \gamma}{\lambda_2},\dfrac{\nabla\lambda_2}{\lambda_2}}_\gamma-\pI{\nabla h,\dfrac{\nabla\lambda_2}{\lambda_2}}_\gamma+ h\norm{\dfrac{\nabla\lambda_2}{\lambda_2}}_\gamma^2-\dfrac{h}{\lambda_2}\Delta_\gamma\lambda_2
			\\\notag
			=&-2\pI{\nabla h,\dfrac{\nabla\lambda_2}{\lambda_2}}_\gamma-\nabla_3h+\dfrac{h}{\lambda_2}\left(\partial^2\gamma^{ab;cd}\nabla_2A_{ab}\nabla_2A_{cd}-2\dfrac{\norm{\nabla A_{12}}_\gamma^2}{\lambda_2-\lambda_1}\right).
		\end{align*}
	
		Then,  since at $p\in \Sigma^-$ the principal curvatures are simple, Property \ref{c)} implies 
		\begin{align*}
			\norm{\nabla A_{12}}^2_\gamma=(\nabla_1A_{12},\nabla_2 A_{12})\begin{pmatrix}
			\dfrac{\partial\gamma}{\partial A_{11}}& \dfrac{\partial\gamma}{\partial A_{12}}
			\\
			\dfrac{\partial\gamma}{\partial A_{21}}& \dfrac{\partial\gamma}{\partial A_{22}}
			\end{pmatrix}
		\begin{pmatrix}
				\nabla_ 1 A_{12}
				\\
				\nabla_2 A_{12}
			\end{pmatrix} \geq0.
		\end{align*}
Moreover, since $\gamma$ is a concave curvature function, we observe that
		\begin{align*}
			\partial^2\gamma^{ab;cd}\nabla_2A_{ab}\nabla_2A_{cd}=\dfrac{\partial^2\gamma}{\partial\lambda_a\partial\lambda_b}\nabla_2 A_{aa}\nabla_2 A_{bb}+2\sum_{a<b}\dfrac{\frac{\partial\gamma}{\partial\lambda_b}-\frac{\partial\gamma}{\partial\lambda_a}}{\lambda_b-\lambda_a}|\nabla_2 A_{ab}|^2\leq 0.
		\end{align*}
		
		 Therefore, we obtain the following differential inequality
		\begin{align}\label{h}
		\Delta_\gamma h+2\pI{\nabla h,\dfrac{\nabla \lambda_2}{\lambda_2}}_\gamma+\nabla_3 h=\dfrac{h}{\lambda_2}\left(\partial^2\gamma^{ab;cd}\nabla_2A_{ab}\nabla_2A_{cd}-2\dfrac{\norm{\nabla A_{12}}_\gamma^2}{\lambda_2-\lambda_1}\right)\leq 0. 
		\end{align}
		Consequently, since $h(p)$  reaches its minimum at $p_0\in\Sigma^{-}$, by the strong maximum principle, we conclude that $h$ is constant on $\Sigma^{-}$. 
		\newline
		
		Now, since $h$ is constant on $\Sigma^{-}$,  from Eq. \eqref{h}, we obtain 
			\begin{align*}
				0=\norm{\nabla A_{12}}_\gamma^2\mbox{ in }\Sigma^-.
			\end{align*}
		This implies that $0=\nabla_1 A_{12}=\nabla_2 A_{12}$, and by the Gauss-Codazzi equations, we observe that  $\nabla_{2}A_{11}=\nabla_1A_{22}=0$ in  $\Sigma^-$. In other words,  $\set{\tau_i}$ is a parallel orthonormal frame of $\Sigma^-$, and therefore, $\lambda_1\lambda_2=0$ in $\Sigma^-$, gives a contradiction since $\lambda_1(p_0)<0<\lambda_2(p_0)$.
	\end{proof}

	Next, we consider the case where the infimum is attained at infinity in $\Sigma^-$, i.e., there is an unbounded sequence  $p_k \in \Sigma^-$ such that $\lim\limits_{k\to\infty}h(p_k) = I$. To address this, we apply Lemma \ref{Omori-Yau} on $\Sigma^-$ (possibly after considering a subsequence) to obtain a sequence $p_k \in \Sigma^-$ such that
	\begin{align}\label{Omori} 
		h(p_k) \to I, \:|\nabla h(p_k)| \to 0, \text{ and } \Delta_\gamma h(p_k) \to \delta \geq 0 \text{ as } k \to \infty. 
	\end{align}

	Then, the sequence of surfaces
	\begin{align*}
		\Sigma_k=\Sigma-p_k=\set{p-p_k\in\rr^3:p\in\Sigma}
	\end{align*}
	satisfies the following: $\Sigma_k$ is  a $\gamma$-translator, $0\in\Sigma_k$, and possess uniformly bounded second fundamental form for every $k$. Therefore, by compactness, the sequence $\Sigma_k$ possesses a subsequence that converge smoothly on compact subset to a surface $\Sigma_\infty$, which is not necessarily connected. However, we note that smooth convergence on compact sets implies that $\Sigma_\infty$ is a complete $\gamma$-translator.
	\newline
	
	We claim that  the connected component $\Sigma_\infty'$  of $\Sigma_\infty$ that contains the origin is a vertical plane. Indeed, if this were not the case, then $\gamma>0$ on $\Sigma_\infty'$, and consequently, the function $h(p)$ attains a positive infimum at the origin of $\Sigma_\infty'$ contradicting Lemma \ref{Step 1}. 
	
	\begin{lemma}The infimum $I$ cannot be attained at infinity on $\Sigma$.
	\end{lemma}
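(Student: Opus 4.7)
The plan is to argue by contradiction: assume $I$ is attained at infinity, so Lemma \ref{Omori-Yau} applied to $h$ on $\Sigma^-$ furnishes a sequence $p_k\in\Sigma^-$ with $h(p_k)\to I$, $|\nabla h(p_k)|\to 0$, and $\Delta_\gamma h(p_k)\to\delta\geq 0$. As explained just before the lemma, translating by $p_k$ and extracting a smooth subsequential limit produces a complete $\gamma$-translator $\Sigma_\infty$ whose connected component $\Sigma_\infty'$ through the origin is a vertical plane; the task is then to rule out this vertical plane scenario. From the smooth convergence at $0$, together with the preceding claim, I would first extract the asymptotics $|A(p_k)|\to 0$, $\lambda_2(p_k)\to 0^+$, $\gamma(p_k)\to 0$, and $r_k=\lambda_1(p_k)/\lambda_2(p_k)\to r^*\in(c(\alpha),0)$ with $\gamma(r^*,1)=I$, so that although $\Sigma_k\to$ vertical plane, the curvature ratio at $p_k$ stays non-degenerate.

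The next step is a tailored blow-up. Parametrize $\Sigma_k$ near $0$ as a graph $(x,u_k(x,z),z)$ over the limit vertical plane and set $v_k=u_k/\lambda_2(p_k)$, so that $D^2 v_k(0)$ has eigenvalues $(r_k,1)\to(r^*,1)$, bounded and non-trivial. Using the $1$-homogeneity of $\gamma$ and $u_k\to 0$ in $C^\infty_{loc}$, the $\gamma$-translator graph equation for $u_k$ transcribes into the same $\gamma$-translator equation for $v_k$ modulo errors of order $\lambda_2(p_k)^2$. Interior estimates for this uniformly elliptic quasilinear equation (Lemma \ref{Elliptic estimate n=2}) upgrade the smooth convergence of $u_k$ into uniform $C^\infty_{loc}$ bounds on $v_k$, yielding, along a subsequence, $v_k\to v$ in $C^\infty_{loc}(\rr^2)$ and an entire $\gamma$-translator $\Sigma_v\subset\rr^3$ inheriting the cone constraint $\alpha|A|\leq H$ and the convexity/concavity hypotheses of $\gamma$.

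To finish, since the function $h$ is scale invariant by $1$-homogeneity of $\gamma$ and $h\geq I$ on every $\Sigma_k$, the limit satisfies $h\geq I$ on $\Sigma_v$, while at the origin $h(0)=\gamma(r^*,1)=I$ with $r^*<0$, so $0\in\Sigma_v^-$ is an interior minimum of $h$, contradicting Lemma \ref{Step 1} applied to $\Sigma_v$. The main obstacle is justifying the uniform $C^\infty_{loc}$ bounds on the renormalized graphs $v_k$: the identity $D^2 u_k(0)=O(\lambda_2(p_k))$ must be propagated to higher derivatives on compact subsets of $\rr^2$ via the quasilinear structure of the translator equation, rather than being available only on shrinking neighborhoods of the origin, and the error terms of order $\lambda_2(p_k)^2$ must be shown to vanish uniformly in the limit so that $\Sigma_v$ is a bona fide $\gamma$-translator to which Lemma \ref{Step 1} applies.
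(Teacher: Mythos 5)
Your approach diverges from the paper's in an essential way, and the divergence introduces a gap that is more serious than the one you flagged.

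The paper does not pass to a rescaled limit at all. After obtaining the Omori--Yau sequence $p_k$ and observing that $\Sigma_k=\Sigma-p_k$ subconverges to a vertical plane through the origin, the paper works directly with limits of quantities along $p_k$. The key step is the identity $\nabla\gamma = A(e_3^\top,\cdot)$, which, since $e_3^\top\to e_3\neq 0$ (the plane is vertical and contains $e_3$), shows that $\tfrac{\nabla\gamma}{\lambda_2}(p_k)\to\vec v\neq 0$. Combined with $|\nabla h(p_k)|\to 0$ and the gradient formula for $h$, this pins down the limits of $h\tfrac{\nabla\lambda_2}{\lambda_2}$ and $\partial_1\gamma\, h\tfrac{\nabla\lambda_1}{\lambda_2}$, and hence a strictly negative limit for the right-hand side of the (multiplied by $h$) evolution inequality for $h$. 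Against this, $h\Delta_\gamma h(p_k)\to\delta I\geq 0$ and the first-order terms vanish, giving the contradiction. No blow-up surface is constructed and Lemma \ref{Step 1} is never invoked again.

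The gap in your proposal is that the anisotropic rescaling $v_k = u_k/\lambda_2(p_k)$ does not produce a $\gamma$-translator in the limit. Writing the graphical translator equation as $\gamma\big(\lambda((I+Du_k\otimes Du_k)^{-1}D^2u_k)\big)=-\partial_z u_k$ and substituting $u_k=\lambda_2(p_k)v_k$, the $1$-homogeneity gives $\gamma\big(\lambda((I+\lambda_2(p_k)^2\,Dv_k\otimes Dv_k)^{-1}D^2 v_k)\big)=-\partial_z v_k$. As $\lambda_2(p_k)\to 0$ the metric correction disappears and the limit $v$ solves the \emph{linearized} equation $\gamma(\lambda(D^2 v))=-\partial_z v$, not the geometric translator equation $\gamma\big(\lambda((I+Dv\otimes Dv)^{-1}D^2 v)\big)=-\partial_z v$. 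These genuinely differ: $Dv$ cannot be expected to vanish, since $|\partial_z u_k|\sim\gamma(p_k)\sim\lambda_2(p_k)$ forces $|\partial_z v_k|$ to stay bounded away from zero. Thus $\Sigma_v$ is not a $\gamma$-translator, and Lemma \ref{Step 1} (whose proof relies on the Simons-type equations for translators in Lemma \ref{equations}) does not apply to it. This is distinct from, and in addition to, the regularity-propagation issue you raised. To salvage the blow-up route you would have to reprove an interior-minimum principle for the linearized operator $\partial\gamma^{ab}\partial_a\partial_b$ acting on the ratio of eigenvalues of $D^2 v$, which amounts to redoing the core computation rather than citing Lemma \ref{Step 1}; the paper's direct limit computation sidesteps this entirely.
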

	\begin{proof}
		Firstly, we note that the quadratic form given by
		\begin{align}\label{quadratic}
			\dfrac{1}{\lambda_2(p_k)}A(\cdot,\cdot)(p_k)
		\end{align}
 will converge to a quadratic form in $\Sigma_\infty'$ with eigenvalues $1$ and $a<0$. To see this, we first note that the $1$-homogeneity of $\gamma$ implies that
		\begin{align*}
			\gamma(\lambda)=\partial_1\gamma(\lambda)\lambda_1+\partial_2\gamma(\lambda)\lambda_2.
		\end{align*}
		  Then, for every $p_k\in \Sigma^{-}$, we observe that
		  \begin{align*}
		  -\dfrac{\lambda_1}{\lambda_2}&=\dfrac{\partial_2\gamma}{\partial_1\gamma}-\dfrac{\partial_2\gamma\lambda_2+\partial_1\gamma\lambda_1}{\partial_1\gamma\lambda_2}= \dfrac{\partial_2\gamma-h}{\partial_1\gamma}\geq C\left(\partial_2\gamma(\lambda_1,1)-h\right),
	\end{align*}
in the last inequality we used that $\gamma$ is uniformly elliptic and $\partial_2\gamma(\lambda_1,\lambda_2)$ is decreasing in the second argument do to the concavity of $\gamma(\lambda)$. Consequently, the claim follows since 
\begin{align*}
a=\lim_{k\to\infty}\dfrac{\lambda_1}{\lambda_2}
\leq C(I-\partial_2\gamma(0,1))=C(I-\gamma(0,1))<0,	
\end{align*}
where  $\lim\limits_{k\to\infty}h(p_k)=I<\gamma(0,1)$, $\lim\limits_{k\to\infty}\lambda_i(p_k)=0$ and $\gamma(0,1)=\partial_2\gamma(0,1)$. 
\newline		
		
		On the other hand, by taking derivatives of $\gamma$ at $p_k$, we observe that 
		\begin{align}\label{nabla gamma}
			\nabla \gamma=\nabla \pI{\nu,e_3}=A(e_3^\top,\cdot),
		\end{align}
		where $(\cdot)^{\top}$ denotes the orthogonal projection onto $T\Sigma$. At $p_k$, we have that  $e_3^\top\to e_3$ as $k\to\infty$ since $\Sigma_\infty'$ is a vertical plane passing through the origin. This gives 
		\begin{align*}
			\dfrac{\nabla \gamma}{\lambda_2}(p_k)\to \vec{v}\in\rr^3\setminus\set{ 0}, \mbox{ as }k\to\infty,
		\end{align*}
		because the eigenvalues of the quadratic form \eqref{quadratic} are non-zero. Consequently, from equations \eqref{gradient h} and  \eqref{Omori}, we obtain 
		\begin{align*}
		 h(p_k)\dfrac{\nabla\lambda_2(p_k)}{\lambda_2(p_k)}\to \vec{v}, \mbox{ as }k\to\infty.
		\end{align*}
	
		Furthermore, by writing $\nabla h$ differently, we see that 
		\begin{align*}
			\nabla h= \dfrac{\partial_1\gamma\nabla\lambda_1+\partial_2\gamma\nabla\lambda_2 }{\lambda_2}- h\dfrac{\nabla\lambda_2}{\lambda_2}.
		\end{align*}
		Then, by multiplying the above equation by $h$, we have 
		\begin{align*}
			h\nabla h
			&=\left(\partial_2\gamma- h \right)h\dfrac{\nabla\lambda_2}{\lambda_2}+h\partial_1\gamma\dfrac{\nabla\lambda_1}{\lambda_2}.
		\end{align*}
		Consequently, by evaluating at $p_k$ and taking limits, we obtain  
		\begin{align*}
		\partial_1\gamma h\dfrac{\nabla\lambda_1}{\lambda_2}&\to
			-\left(\partial_2\gamma(a,1)-I\right)\vec{v},\:\partial_2\gamma h\dfrac{\nabla\lambda_2}{\lambda_2}&\to
		\partial_2\gamma(a,1)\vec{v}.
	\end{align*}

		On the other hand, by multiplying  Eq. \eqref{h} by $h$, we see that 
		\begin{align}\label{hh}
			h\Delta_\gamma h+2 h\pI{\nabla h,\dfrac{\nabla \lambda_2}{\lambda_2}}_\gamma+h\nabla_3 h\leq -2h^2\dfrac{\norm{\nabla A_{12}}_\gamma^2}{\lambda_2(\lambda_2-\lambda_1)}.
		\end{align}
Moreover, we can write the right-hand side of the above equation as
		\begin{align*}
			\dfrac{h^2\norm{\nabla A_{12}}_\gamma^2}{\lambda_2(\lambda_2-\lambda_1)}
			&=\dfrac{1}{1-\dfrac{\lambda_1}{\lambda_2}}\left( \partial_1\gamma(\lambda)\pare{\dfrac{h|\nabla_2\lambda_1|}{\lambda_2}}^2+\partial_2\gamma(\lambda)\pare{\dfrac{h|\nabla_1\lambda_2|}{\lambda_2}}^2 \right)
			\\
			&\to
				\dfrac{1}{1-a}\left(\dfrac{(\partial_2\gamma(a,1)-I)^2|v_2|^2}{\partial_2\gamma(a,1)}+\dfrac{|v_1|^2}{\partial_2\gamma(a,1)}\right)>0,
				\end{align*}
 as $k\to\infty$, where $v_i=\pI{\vec{v},\tau_i}$ and  $\tau_i=\lim\limits_{k\to\infty}\tau_i(p_k)$ is an orthonormal basis of  $T_0\Sigma_\infty'$.  
 \newline
 
 Consequently, by taking limits in Eq. \ref{hh} at $k\to\infty$, we obtain $0\leq \delta I<0$, and therefore, that infimum of $h$ cannot be attained at infinity,  finalizing the proof.
	\end{proof}
	
Now, we proceed with the case where $\gamma:\overline{\Gamma}\to[0,\infty)$ is a convex curvature function. The proof will follow the same steps as in the concave case but will use a different function. Therefore, we will only highlight the main differences from the previous case.
\newline

We consider the function  $j:\Sigma\to\rr$ given by $	j(p)=\dfrac{\lambda_1}{\gamma(\lambda)}$.  We note that $j(p)$ is continuous and bounded, since 
 \begin{align*}
\dfrac{c(\alpha)}{\gamma(1,1)}\leq \dfrac{\frac{\lambda_1}{\lambda_2}}{\gamma\left(\frac{\lambda_1}{\lambda_2},1\right)}= \dfrac{\lambda_1}{\gamma(\lambda)}\leq \dfrac{C\lambda_1}{H}\leq C.
 \end{align*}
Moreover, as in the previous case, $j(p)$ is smooth in $\Sigma^-$. 
\newline

Consequently, $\Sigma$ will be convex if we show that  $J=\inf\limits_\Sigma j(p)\geq 0$. We will achieve this by assuming the contrary, i.e., 		$J\in\left(\dfrac{c(\alpha)}{\gamma(1,1)},0\right)$.
	\begin{lemma}\label{non minimum}
		The function $j(p)$ cannot attain a local  negative minimum in $\Sigma^-$. 	
	\end{lemma}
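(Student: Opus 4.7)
The plan is to mirror Lemma \ref{Step 1}, replacing $h$ by $j$ and concavity of $\gamma$ by convexity. Since $\lambda_1 < 0 < \lambda_2$ on $\Sigma^-$, the principal curvatures are simple there and $j = \lambda_1/\gamma$ is smooth, so the elliptic PDE machinery from Lemma \ref{equations} applies pointwise in an adapted orthonormal frame of principal directions.

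First, I would derive a differential inequality for $j$ analogous to Eq.~\eqref{h}. Starting from the factorization $\lambda_1 = j\gamma$ and expanding
\begin{equation*}
\Delta_\gamma(j\gamma) = \gamma\Delta_\gamma j + 2\pI{\nabla j,\nabla\gamma}_\gamma + j\Delta_\gamma\gamma,
\end{equation*}
one substitutes Eq.~\eqref{gamma} for $\Delta_\gamma\gamma$, combines Eq.~\eqref{Dif lambda} with Eq.~\eqref{A_ij} for $\Delta_\gamma\lambda_1$, and uses the identity $\nabla_{n+1}A_{11} = \gamma\nabla_{n+1}j + j\nabla_{n+1}\gamma$. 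The $|A|^2_\gamma$ terms cancel because $\lambda_1 = j\gamma$, and the first-order transport terms consolidate into $\gamma\nabla_{n+1}j$, yielding
\begin{equation*}
\gamma\Delta_\gamma j + 2\pI{\nabla j,\nabla\gamma}_\gamma + \gamma\nabla_{n+1}j = -\partial^2\gamma^{ab;cd}\nabla_1 A_{ab}\nabla_1 A_{cd} - 2\frac{\norm{\nabla A_{12}}_\gamma^2}{\lambda_2-\lambda_1}.
\end{equation*}
Convexity of $\gamma$ makes the Hessian term non-negative, and $\lambda_2 - \lambda_1 > 0$ on $\Sigma^-$ together with Property~\ref{c)} makes the Codazzi term non-negative. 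Dividing by $\gamma > 0$, this reads $\mathcal{L}j \leq 0$ for a linear elliptic operator with smooth bounded coefficients on $\Sigma^-$.

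If $j$ attained a local minimum at some $p_0 \in \Sigma^-$ with $j(p_0) < 0$, the strong maximum principle would force $j \equiv j(p_0)$ on the connected component $U \subset \Sigma^-$ containing $p_0$. Equality in the inequality would then give $\nabla_1 A_{12} = \nabla_2 A_{12} = 0$ on $U$; by the Codazzi equations, $\nabla_2 A_{11} = \nabla_1 A_{22} = 0$, so the orthonormal frame of principal directions is parallel on $U$. The Gauss equation then forces the sectional curvature $\lambda_1\lambda_2 \equiv 0$ on $U$, contradicting $\lambda_1(p_0)\lambda_2(p_0) < 0$.

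The main obstacle is the algebraic bookkeeping in the first step: one must see that the $|A|^2_\gamma$ contributions cancel and that the remaining first-order terms reorganize as a pure drift on $j$, so that only the two manifestly sign-definite second-order quantities survive on the right. This is the exact analogue of the identity obtained for $h$ in the concave case, and once it is in hand the rest of the argument is routine.
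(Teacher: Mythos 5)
Your proposal is correct and follows essentially the same approach as the paper: you derive the same differential inequality $\Delta_\gamma j + 2\pI{\nabla j, \nabla\gamma/\gamma}_\gamma + \nabla_{n+1}j \leq 0$ (the paper's Eq.~\eqref{j}), observe that convexity of $\gamma$ and monotonicity make the right-hand side nonpositive, and conclude via the strong maximum principle, the Codazzi equations, and the Gauss equation exactly as the paper does (by mirroring Lemma~\ref{Step 1}). The only cosmetic difference is that you obtain the identity by expanding $\Delta_\gamma(j\gamma)$ with the product rule and cancelling the $|A|_\gamma^2$ and drift terms, whereas the paper differentiates the quotient $j = \lambda_1/\gamma$ directly; both yield the same equation.
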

	\begin{proof}
		Let us assume that there exists $p_0\in\Sigma$ such that  $j(p_0)=J$. Then, in  an  adapted frame of principal directions at $p\in\Sigma$, the following equations hold:
		\begin{align}\label{gradient j}
			\nabla j=&\dfrac{\nabla\lambda_1}{\gamma}-j\dfrac{\nabla \gamma}{\gamma},
		\end{align}
\begin{align*}		
			\Delta_\gamma j=&\dfrac{\Delta_\gamma \lambda_1}{\gamma}-\pI{\dfrac{\nabla \lambda_1}{\gamma^{\frac1\alpha}},\dfrac{\nabla\gamma}{\gamma}}_\gamma-\pI{\nabla j,\dfrac{\nabla\gamma}{\gamma}}_\gamma+ j\norm{\dfrac{\nabla\gamma}{\gamma}}_\gamma^2-j\dfrac{\Delta_\gamma\gamma}{\gamma}
		\\
		=&-2\pI{\nabla j,\dfrac{\nabla\gamma}{\gamma}}_\gamma-\nabla_3j
		-\dfrac{1}{\gamma}\left(\partial^2\gamma^{ab;cd}\nabla_1A_{ab}\nabla_1A_{cd}+2\dfrac{\norm{\nabla A_{12}}_\gamma^2}{\lambda_2-\lambda_1}\right).
			\end{align*}
		Moreover, we observe that 
			\begin{align}\label{j}
			\Delta_\gamma j+2\pI{\nabla j,\dfrac{\nabla\gamma}{\gamma}}_\gamma+\nabla_3j=-\dfrac{1}{\gamma}\left(\dfrac{\partial^2\gamma}{\partial A_{ab}\partial A_{cd}}\nabla_1A_{ab}\nabla_1A_{cd}+2\dfrac{\norm{\nabla A_{12}}_\gamma^2}{\lambda_2-\lambda_1}\right).
		\end{align}
		Therefore, by the convexity of $\gamma$, we have that the right-hand side of Eq. \eqref{j} is nonpositive. Finally, we note that from this point onward, Lemma \ref{non minimum} is satisfied by arguments analogous to those used at the end of Lemma \ref{Step 1}.
	\end{proof}
	
	As in the concave case, by the Omori-Yau maximum principle, we may choose a sequence $p_k\in\Sigma^{-}$ such that 
	\begin{align}\label{Omoris}
		j(p_k)\to J<0,\:|\nabla j(p_k)|\to0\mbox{ and }\Delta_\gamma j(p_k)\to\epsilon\in[0,\infty), \mbox{ as }k\to\infty. 
	\end{align}
	Then, the sequence of $\gamma$-translators $\Sigma_k=\Sigma-p_k$ possesses a subsequence that converges smoothly on compact sets to a surface $\Sigma_\infty$. In addition, as in the concave case, we have that the connected component  $\Sigma_\infty'$ of $\Sigma_\infty$ that contains the origin is a vertical plane.
	\begin{lemma}
	The function $j(p)$ cannot attain a negative infimum at infinity in $\Sigma^-$.
	\end{lemma}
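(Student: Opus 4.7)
The proof is the convex analogue of the previous lemma. I would apply the Omori--Yau principle to $j$ on $\Sigma^-$, extracting from \eqref{Omoris} an unbounded sequence $p_k$ with $j(p_k)\to J<0$, $|\nabla j(p_k)|\to 0$, and $\Delta_\gamma j(p_k)\to\epsilon\ge 0$. Consider the translated surfaces $\Sigma_k=\Sigma-p_k$; by the uniform bound on $|A|$ they subsequentially converge smoothly on compact sets to a complete $\gamma$-translator $\Sigma_\infty$, and exactly as in the concave case the connected component $\Sigma_\infty'$ through the origin must be a vertical plane, since otherwise $\gamma>0$ on $\Sigma_\infty'$ and $j$ would attain a negative local minimum at the origin, contradicting Lemma \ref{non minimum}.

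The next step is to extract the asymptotic behavior of the principal curvatures and their gradients along $p_k$. By $1$-homogeneity, $j=\mu/\gamma(\mu,1)$ with $\mu=\lambda_1/\lambda_2$, and this function is strictly increasing in $\mu$; therefore $\nabla j(p_k)\to 0$ forces $\mu(p_k)\to\mu^*\in[c(\alpha),0)$ and $\nabla\mu(p_k)\to 0$. Using $\nabla\gamma=A(e_3^\top,\cdot)$ and $e_3^\top\to e_3$ on the vertical limit plane, the rescaled form $A/\lambda_2$ converges on $T_0\Sigma_\infty'$ to a quadratic form with eigenvalues $\mu^*,1$, and $\nabla\gamma/\lambda_2\to\vec v\ne 0$. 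Combined with $\nabla\mu\to 0$, this pins down $\nabla\lambda_1/\lambda_2\to J\vec v$ and $\nabla\lambda_2/\lambda_2\to\vec v/\gamma(\mu^*,1)$, while the Codazzi identities give the limits of $\nabla_a A_{12}/\lambda_2$.

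The key step is to pass to the limit in Eq.~\eqref{j} at $p_k$. On the left, $\Delta_\gamma j\to\epsilon\ge 0$, while the terms $\pI{\nabla j,\nabla\gamma/\gamma}_\gamma$ and $\nabla_3 j$ vanish since $\nabla j\to 0$ and $\nabla\gamma/\gamma$ stays bounded by the previous step. For the right-hand side, I would expand $\partial^2\gamma^{ab;cd}\nabla_1A_{ab}\nabla_1A_{cd}$ in the principal-direction frame as in the preliminaries. After rescaling by $1/\lambda_2$, the diagonal Hessian piece converges to the quadratic form $\mbox{Hess}(\gamma)(\mu^*,1)$ evaluated on the radial vector $(\mu^*,1)$, which vanishes by Euler's identity for $1$-homogeneous $\gamma$. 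The surviving off-diagonal piece combines with $2\norm{\nabla A_{12}}_\gamma^2/(\lambda_2-\lambda_1)$ and simplifies to a positive multiple of $\partial_2\gamma(\mu^*,1)\,[(\mu^*)^2 v_2^2+v_1^2]$. Dividing by $\gamma\sim\lambda_2\gamma(\mu^*,1)$ yields a strictly negative finite limit for the right-hand side, contradicting $\epsilon\ge 0$.

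The main obstacle is the algebraic consolidation of the right-hand side in the limit: one must verify that Euler's identity (killing the diagonal Hessian), the Codazzi matching of $\nabla_1 A_{11}$, $\nabla_1 A_{22}$ with $\nabla_a A_{12}$, and the convex-symmetric sign inequality $\partial_2\gamma(\mu,1)>\partial_1\gamma(\mu,1)$ for $\mu<1$ together produce a manifestly positive coefficient. This is precisely where the convexity of $\gamma$ is used in an essential way.
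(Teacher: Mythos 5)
Your proposal is correct and takes essentially the same route as the paper: Omori--Yau applied to $j$ on $\Sigma^-$, blow-up of $\Sigma-p_k$ to a vertical limit plane $\Sigma_\infty'$, extraction of the limits of $\nabla\lambda_i/\gamma$ from $|\nabla j|\to 0$ together with $\nabla\gamma/\gamma\to\vec{w}\neq 0$, and a sign contradiction in Eq.~\eqref{j} between the Omori--Yau limit $\epsilon\ge 0$ on the left and a strictly negative limit on the right. The only minor difference is that you kill the diagonal Hessian piece $\partial^2\gamma\,\nabla_1\lambda\,\nabla_1\lambda$ via Euler's identity (since $\nabla\mu\to0$ makes $\nabla_1\lambda/\gamma$ asymptotically radial), whereas the paper takes the shorter route of first multiplying \eqref{j} by $j<0$ to get \eqref{jj} and then discarding that term outright by its favorable sign under convexity; your Euler observation is a correct refinement, though not needed for the contradiction.
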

	\begin{proof}
		Firstly, we observe that the quadratic form $\gamma^{-1}(p_k)A(\cdot,\cdot)$
	 will converge to a quadratic form in $\Sigma_\infty'$ with eigenvalues $J<0$ and $b\in\left[-J, \dfrac{1}{\gamma(c(\alpha,1))}\right]$, since 
	 \begin{align*}
	b=\lim_{k\to\infty}\dfrac{\lambda_2(p_k)}{\gamma(p_k)}\geq \lim_{k\to\infty}-\dfrac{\lambda_1(p_k)}{\gamma(p_k)}\geq - J>0. 
	 \end{align*}
	 
	In addition, by Eq. \eqref{nabla gamma}, along with the fact that $\Sigma_\infty'$ is a vertical hyperplane passing through the origin, we have
		\begin{align*}
			\dfrac{\nabla \gamma}{\gamma}(p_k)\to \vec{w}\in\rr^3\setminus\set{ 0}, \mbox{ as }k\to\infty. 
		\end{align*}
Then, by combining equations \eqref{gradient j} with \eqref{Omoris} 
		we obtain  $\dfrac{\nabla\lambda_1}{\gamma}(p_k)\to J\vec{w}$ as $k\to\infty$. Furthermore, by  writing $\nabla j$ differently, we get
		\begin{align*}
			\nabla j&=\dfrac{\nabla \lambda_1}{\gamma}-\dfrac{j}{\gamma} (\partial_1\gamma\nabla\lambda_1+\partial_2\gamma\nabla\lambda_2 )
			=\left(1-j\partial_1\gamma \right)\dfrac{\nabla\lambda_1}{\gamma}-j\partial_2\gamma\dfrac{\nabla\lambda_2}{\gamma}.
		\end{align*}

In particular, after taking limit at $p_k$ as $k\to\infty$, we obtain
\begin{align*}
	j\partial_2\gamma\dfrac{\nabla \lambda_2}{\gamma}\to \left(1-J\partial_1\gamma\left(\dfrac{J}{b},1\right)\right)J\vec{w}. 
\end{align*}
Here, we used that 	$\dfrac{\lambda_1(p_k)}{\lambda_2(p_k)}\to \dfrac{J}{b}<0$ and $\partial_1\gamma$ is a $0$-homogeneous function. 
\newline

Then, after multiply Eq. \eqref{j} by $j(p_k)<0$, we have 
		\begin{align}\label{jj}
			j\Delta_\gamma j+2j\pI{\nabla j,\dfrac{\nabla \gamma}{\gamma}}_\gamma+j\nabla_3 j\geq-\dfrac{2j}{\gamma}\dfrac{\norm{\nabla A_{12}}_{\gamma}^2}{\lambda_2-\lambda_1}.
		\end{align}
	Additionally, we note that at  $p_k$,  the term
	\begin{align*}
		\dfrac{2j}{\gamma}\dfrac{\norm{\nabla A_{12}}_{\gamma}^2}{\lambda_2-\lambda_1}&
		=\dfrac{-2}{1-\frac{\lambda_2}{\lambda_1}}\left(\partial_1\gamma\dfrac{|\nabla_2\lambda_1|^2}{\gamma^{2}}+\partial_2\gamma\dfrac{|\nabla_1\lambda_2|^2}{\gamma^{2}}\right)		
		\\
		&\to \dfrac{-2J^2}{1-\frac{b}{J}}\pare{\partial_1\gamma(b^{-1}J,1)|w_2|^2  +\dfrac{(1-J\partial_1\gamma(b^{-1}J,1))}{\partial_2\gamma(b^{-1}J,1)}|w_1|^2}<0,
	\end{align*}
	 as $k\to\infty$. Here, $w_i=\pI{\vec{w},\tau_i}$ and $\tau_i$ is an orthonormal basis of  $T_0\Sigma_\infty'$. Therefore, by evaluating Eq. \eqref{jj}  at $p_k$ and using Eq. \eqref{Omori} to takes limits, we obtain the desire contradiction $0< J\epsilon\leq 0$, finalizing the proof of the Thm \ref{Generalization of SX} for $n=2$.
	\end{proof}
	
	\section{\textbf{Proof of Theorem \ref{Generalization of SX} for $n\geq 3$}}\label{n>2}
	In this section we will prove  Thm. \ref{Generalization of SX} for dimension $n\geq 3$. In this case, we assume that $\Sigma\subset\rr^{n+1}$ is a complete $\gamma$-translator such that the principal curvatures are labeled by $\lambda_1\leq\ldots\leq\lambda_n$ and belong to  
	\begin{align*}
	\tilde{\Gamma}= \set{\lambda\in\Gamma: H>0,\mbox{ and }\lambda_1+\lambda_2>0}.
	\end{align*} 

Firstly, since $n\geq 3$, we have that $S_2(\lambda)>0$, and therefore 
\begin{align*}
|A|^2=H^2-2S_2(\lambda)\leq H^2. 
\end{align*}
Furthermore, we note that $\tilde{\Gamma}$ is compactly supported in $\Gamma$, i.e., $\overline{\tilde{\Gamma}}\cap\Sp^{n-1}$ is compact in $\Gamma\cap\set{\lambda_1<0}$. By Lemma \ref{Eliptic estimate},  there exists  $C=C(n,\tilde{\Gamma}, \gamma)>0$ such that $C^{-1}\leq \dfrac{\partial\gamma}{\partial\lambda_i}\leq C$, for all $i=1,\ldots,n$. Consequently, since $\gamma$ is $1$-homogeneous, we have $\gamma H^{-1}\in[C^{-1},C]$ in $\lambda\in\tilde{\Gamma}\cap\set{\lambda_1<0}$. 
\newline

Next, throughout the proof of Theorem \ref{Generalization of SX}, we will adopt the notation established in \cite{xie2023convexity}. This will allow us to extend the techniques from the previous section to higher dimensions effectively.
	\newline
	
	 Let $\set{E_i}_{i=1}^{m}$ be the eigendistributions associated with the second fundamental form $A(X,Y)=-\pI{\nabla_X\nu,Y}$,where $\nu$ is the positive  unit normal vector field of $\Sigma$. Then, the set
	\begin{align*}
		\Sigma_A^{-}=\set{p\in \Sigma^{-}: E_i(p)  \mbox{ is constant  on a neighborhood of }  p}	
	\end{align*}
	is an open dense subset  of $\Sigma^-=\set{p\in\Sigma:\lambda_1(p)<0}$. 
	\newline
	
	Furthermore, for every $p\in\Sigma_A^{-} $, we choose the associated orthonormal basis of  principal directions
	\begin{align*}
		\tau_1,\ldots,\tau_{d_1}\in E_1,\:\tau_{d_1+1},\ldots,\tau_{d_2}\in E_2,\ldots,\tau_{d_1+d_2+\ldots d_{m-1}},\ldots,\tau_n\in E_m,
	\end{align*}
	associated with  $\lambda_i=A(\tau_i,\tau_i)$, counted without multiplicity.  
	\newline
		
Now, we begin with the case where $\gamma$ is a strictly convex curvature function in off-radial directions, such that $H \leq \gamma$. For this purpose, we consider the function $g:\Sigma\to (-\infty,0)$ given by 
	\begin{align}\label{f}
		g=f\left(\dfrac{\lambda_1}{\gamma(\lambda)}\right),
\mbox{ where }
	f(r)=\begin{cases}
		-r^4e^{\frac{-1}{r^2}}&,\mbox{if }r<0,
		\\
		0&,\mbox{if }r\geq 0.
	\end{cases}
\end{align}	
This function satisfies $f''<0$ and $f'>0$ where
both  are bounded away from $0$ on $[-r_2, -r_1]$, for any $0<r_1 < r_2$. 
\newline

Moreover, we note that $g(p)$ is bounded in $\Sigma$, since
\begin{align*}
\dfrac{-1}{C(n-2)}	\leq \dfrac{-\lambda_1}{CH}\leq\dfrac{|\lambda_1|}{\gamma} <\dfrac{\lambda_2}{\gamma}\leq\ldots\leq\dfrac{\lambda_n}{\gamma}\leq\dfrac{H}{\gamma}\leq 1,
\end{align*}
and it is smooth on $\Sigma^-$. Then, as in the previous section, we have the following equations for $g$ in $\Sigma_A^-$:
	\begin{align}\label{gradient g convex}
		\nabla_i g&=f'\left(\dfrac{\nabla_i \lambda_1}{\gamma} -\dfrac{\lambda_1}{\gamma^{2}}\nabla_i\gamma\right).
\end{align}
\begin{align*}
		L_\gamma g&=	\Delta_\gamma g+2\pI{\nabla g,\dfrac{\nabla\gamma}{\gamma}}_\gamma+\nabla_{n+1}g
		\\ \notag
		&=f''\norm{\nabla \left(\dfrac{\lambda_1}{\gamma}\right)}_\gamma^2 -\dfrac{f'}{\gamma}\left(\partial^2\gamma^{\:ab;cd}\nabla_1A_{ab}\nabla_1A_{cd}+2\dfrac{\norm{\nabla A_{1i}}_\gamma^2}{\lambda_i-\lambda_1}\right)
		\\\notag
		&\leq 0.
	\end{align*}

Next, we assume by contradiction that $\Sigma$ is not convex, i.e.,
	\begin{align*}
		I=\inf_\Sigma \dfrac{\lambda_1}{\gamma}\in \left[\dfrac{-1}{C(n-2)},0\right)\Leftrightarrow \inf_\Sigma g(p)\in [\beta, 0),\mbox{ where } \beta=f\left(I\right).
	\end{align*}

	\begin{lemma}\label{g interior minimum}
The function	$g$ cannot attain a negative minimum on $\Sigma^{-}$.
	\end{lemma}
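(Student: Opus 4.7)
The plan is to argue by contradiction, closely following the strategy used in the two-dimensional concave case (Lemma \ref{Step 1}). Suppose there exists $p_0\in\Sigma^-$ at which $g$ attains its negative infimum.

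First, I would record that the computation preceding the lemma already supplies, on the open dense subset $\Sigma_A^-\subset\Sigma^-$, the linear second-order inequality $L_\gamma g\leq 0$, in which both right-hand summands are separately non-positive thanks to $f'>0$, $f''<0$, the convexity of $\gamma$, and the fact that the Codazzi-type terms $\norm{\nabla A_{1i}}_\gamma^2/(\lambda_i-\lambda_1)$ are manifestly non-negative. By Lemma \ref{Eliptic estimate} applied to the symmetric closed cone $\overline{\tilde{\Gamma}}\cap\Sp^{n-1}\cap\set{\lambda_1\leq 0}$, the operator $L_\gamma$ is uniformly elliptic on $\Sigma^-$, and since $|A|\leq H\leq C\gamma\leq C$ there its first-order coefficients are locally bounded. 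Because $g$ is smooth on $\Sigma^-$ and $\Sigma_A^-$ is dense, the inequality extends by continuity to all of $\Sigma^-$. The classical strong maximum principle then forces $g$ to be constant on the connected component $U\subset\Sigma^-$ containing $p_0$, so $\lambda_1/\gamma\equiv I<0$ on $U$.

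Next, I would exploit the resulting equality $L_\gamma g\equiv 0$ on $U\cap\Sigma_A^-$. Each non-positive summand must vanish: the Codazzi-type term produces $\nabla A_{1i}=0$ for every principal direction $\tau_i$ with $\lambda_i>\lambda_1$, while the strict convexity of $\gamma$ in off-radial directions (Def. \ref{def}) applied to $\partial^2\gamma^{ab;cd}\nabla_1 A_{ab}\nabla_1 A_{cd}=0$ forces $(\nabla_1 A_{aa})_{a=1}^n$ to be a scalar multiple of $(\lambda_a)_{a=1}^n$ and $\nabla_1 A_{ab}=0$ across distinct eigenblocks. Combining these vanishings with Codazzi's identity $\nabla_k A_{ij}=\nabla_i A_{kj}$, the $\lambda_1$-eigendistribution $E_1$ becomes parallel on $U\cap\Sigma_A^-$, exactly mirroring the closing step of Lemma \ref{Step 1}.

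The main obstacle is converting this rigidity into a contradiction in higher dimension. I would aim to show that the parallel splitting $E_1\oplus E_1^\perp$, together with the constancy of $\lambda_1/\gamma$ and the translator equation $\gamma(\lambda)=\pI{\nu,e_{n+1}}$, forces $U$ to split locally as a Riemannian product of a line tangent to $E_1$ and an $(n-1)$-dimensional leaf, along which $\nu$ is parallel. That would yield $\lambda_1\equiv 0$ on $U$, contradicting $U\subset\Sigma^-$. The subtle point is to verify that strict convexity of $\gamma$ in off-radial directions is strong enough to upgrade the Hessian-equality conclusion from the $\lambda_1$-block to all of $\nabla A$ on $U\cap\Sigma_A^-$, so that the local product structure really holds; absent that upgrade, one would instead need to argue directly that the boundary $\partial U$ inside $\Sigma$, where $\lambda_1=0$ and hence $\lambda_1/\gamma=0\neq I$, is non-empty, which requires additionally ruling out $U=\Sigma$ using the hypothesis $H\leq\gamma$ and the structure of $\tilde{\Gamma}$.
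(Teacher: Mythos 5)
Your first half matches the paper's strategy faithfully: strong maximum principle applied to $L_\gamma g\leq 0$ gives $g$ constant on the component containing $p_0$, and then each of the three non-positive summands in $L_\gamma g$ must vanish separately on $\Sigma_A^-$. The paper also deduces from $\norm{\nabla A_{1i}}_\gamma^2=0$ that $\nabla_j A_{1i}=0$ for all $j$ and $i\neq 1$, and hence by Codazzi $\nabla_i\lambda_1=\nabla_1\lambda_i=0$ for $i\neq 1$. Up to this point you and the paper agree.

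Where you diverge is the closing step, and there is a gap in your proposal, but the worry you flag is unfounded and the argument can be closed more cheaply than you think. You write that strict convexity (Def.~\ref{def}) applied to $\partial^2\gamma^{ab;cd}\nabla_1A_{ab}\nabla_1A_{cd}=0$ only forces $(\nabla_1 A_{aa})_a$ to be a scalar multiple of $(\lambda_a)_a$, and you then worry about upgrading this to full vanishing of $\nabla A$ in order to extract a local product structure. But the prior Codazzi vanishings already kill every component $\nabla_1A_{ab}$ with $(a,b)\neq(1,1)$, because $\nabla_1A_{ab}=\nabla_aA_{1b}$ and $\nabla_j A_{1i}=0$ whenever $i\neq 1$. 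So the diagonal vector $(\nabla_1 A_{aa})_a$ is $(\nabla_1\lambda_1,0,\ldots,0)$, and if that is a scalar multiple of $(\lambda_1,\ldots,\lambda_n)$ with $\lambda_2>0$, the scalar must be zero, giving $\nabla_1\lambda_1=0$ as well. The paper instead invokes Lemma~\ref{Eliptic estimate} to conclude $|\nabla_1A_{ab}|=0$ outright; either way one arrives at $|\nabla\lambda_1|=0$ on each component of $\Sigma_A^-$.

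The remaining gap is the final contradiction. You propose a de Rham-type splitting and then claim $\lambda_1\equiv 0$ on $U$, which does not follow (the splitting would instead say $\lambda_2=\ldots=\lambda_n=0$ locally, which contradicts $\lambda_2>0$ on $\Sigma^-$, a correct but convoluted route). The paper's closing is much more direct and does not require any product structure: once $\nabla\lambda_1\equiv 0$ and $g=f(\lambda_1/\gamma)$ is constant with $f'>0$, both $\lambda_1$ and $\gamma$ are locally constant, and then Eq.~\eqref{gamma} forces $|A|_\gamma^2\gamma=0$, which is impossible since $\gamma>0$ and $\lambda_1<0<\lambda_2$. You should replace the product-structure finale with this plug-into-Eq.~\eqref{gamma} argument.
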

	\begin{proof}
	Let assume by contradiction that $g$  attains its minimum at $p_0 \in \Sigma^-$. Then, since $\Sigma_A^-$ is dense in $\Sigma^-$, we have $L_\gamma(g) \leq 0$ on $\Sigma^-$. Therefore, the strong maximum principle implies that $g$ is constant on $\Sigma^-$.
		\\
		
	Moreover, we note that on any connected component of $\Sigma_A^{-}$, the following equation holds:
		\begin{align*}
			\norm{\nabla \left(\dfrac{\lambda_1}{\gamma}\right)}_\gamma^2=\partial^2\gamma^{\:ab;cd}\nabla_1A_{ab}\nabla_1A_{cd}=(1-\delta_{1i})\norm{\nabla A_{1i}}_\gamma^2=0.
		\end{align*}
	Then, since $\gamma(\lambda)$ is strictly increasing, we have at any fixed point $p \in \Sigma_A^-$ that the equations
		\begin{align*}
			0=\norm{\nabla A_{1i}}_\gamma^2=\partial\gamma^{ab}\nabla_aA_{1i}\nabla_bA_{1i}=\dfrac{\partial\gamma}{\partial\lambda_j}|\nabla_j A_{1i}|^2
		\end{align*}
	give that $|\nabla_j A_{1i}|(p) = 0$ for every $j$ and $i \neq 1$. Consequently, by the Codazzi equations, we obtain
		\begin{align*}
			0=\nabla_i\lambda_1=\nabla_1\lambda_i, \mbox{ for }i\neq 1. 
		\end{align*}
		
	Moreover, since $\gamma$ is strictly convex in off-radial directions, Lemma \ref{Eliptic estimate} implies 
	\begin{align*} 
	|\nabla_1 A_{ab}| = 0, \quad \text{for all } a, b. 
	\end{align*} 
Consequently, we have $|\nabla \lambda_1| = 0$ on every connected component of $\Sigma_A^{-}$. This implies that $\lambda_1$ is constant on the connected components of $\Sigma_A^{-}$, as is $\gamma$, since $\dfrac{\lambda_1}{\gamma}$ is a negative constant because $f'(r) > 0$. However, Eq. \eqref{gamma}, implies that $|A|_\gamma^2\gamma=0$, which is not possible since $\lambda_1 < 0 < \lambda_2$.
	\end{proof}	

	Next, we will use the Omori-Yau maximum principle on the sequence of translators given by $\Sigma_k = \set{ p + p_k : p \in \Sigma }$, where $p_k \in \Sigma$ is a sequence such that $\lim\limits_{k \to \infty} g(p_k) = \inf\limits_{\Sigma} g(p) = \beta < 0$. This means that we can assume, by going through another subsequence if necessary, that the sequence ${ p_k }$ satisfies the properties in Lemma \ref{Omori-Yau}.
	\newline
	
	Furthermore, since $|A|\leq H\leq \gamma\leq 1$ in $\Sigma$, by compactness we may go through another subsequence of $\Sigma_k$, if necessary, to obtain a smooth complete $\gamma$-translator $\Sigma_{\infty} $ containing the origin such that $\Sigma_k \to \Sigma_\infty$ in the smooth topology on compact subsets of $\Sigma_\infty$.
	\newline
	
	Let $\Sigma_\infty'$ be the connected component that contains the origin. We observe that at the origin of $\Sigma_\infty'$, $\lambda_1(0) = 0$, because if this were not the case, we would have $\inf\limits_\Sigma g(p) \in [\beta,0)$, and by Lemma \ref{Claim 1 Stric Concave}, we would conclude that $\Sigma_\infty'$ is the empty set, which is not possible. Furthermore, we note that $\gamma(\lambda(0)) = 0$, since 
	\begin{align*}
		\lim_{k\to\infty}g(p_k)\in [\beta,0)\Leftrightarrow\lim\limits_{k\to\infty}\dfrac{\lambda_1(p_k)}{\gamma(p_k)}\in\left(\dfrac{-1}{C(n-2)},0\right).		
	\end{align*}
Consequently, all the principal curvatures at the origin vanish, since $H \leq \gamma$ and $|A| \leq H$ on $\Sigma_{\infty}'$.
	\begin{lemma}\label{not at infinity convex}
The function $g(p)$ cannot be negative at $0 \in \Sigma_\infty'$.

	\end{lemma}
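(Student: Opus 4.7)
The plan is to argue by contradiction, following the ``attained at infinity'' scheme of the concave case. Suppose $g(0) = \beta < 0$. The Omori-Yau sequence gives $g(p_k) \to \beta$, $|\nabla g(p_k)| \to 0$, and $\Delta_\gamma g(p_k) \to \delta \geq 0$, with smooth convergence $\Sigma_k \to \Sigma_\infty'$; moreover $f'$ and $-f''$ are bounded below by positive constants on the compact interval containing the limiting ratios $\lambda_1(p_k)/\gamma(p_k) \to I \in (-1/(C(n-2)),0)$. The first step is to normalize the second fundamental form: set $\tilde{A}^k = \gamma(p_k)^{-1} A|_{p_k}$. Uniform ellipticity on $\tilde{\Gamma}$ (Lemma \ref{Eliptic estimate}) gives $H/\gamma$ bounded, so the rescaled eigenvalues $\tilde{\lambda}_i^k = \lambda_i(p_k)/\gamma(p_k)$ are bounded. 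Passing to a subsequence, $\tilde{A}^k$ converges to a quadratic form $\tilde{A}_\infty$ on $T_0\Sigma_\infty'$ whose smallest eigenvalue is $\tilde{\lambda}_1 = I < 0$.

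Next, I would identify the limiting tangent geometry at the origin. Since $\gamma(0) = \langle \nu(0), e_{n+1}\rangle = 0$, the unit normal at $0$ is horizontal, so $e_{n+1} \in T_0\Sigma_\infty'$ and $(e_{n+1})^\top(p_k) \to e_{n+1}$. Combining this with the identity $\nabla\gamma = A((e_{n+1})^\top,\cdot)$ from \eqref{nabla gamma} and dividing by $\gamma(p_k)$ gives
\[
\frac{\nabla\gamma}{\gamma}(p_k) = \tilde{A}^k\bigl((e_{n+1})^\top,\cdot\bigr) \longrightarrow \tilde{A}_\infty(e_{n+1},\cdot),
\]
a nonzero covector provided $e_{n+1}$ has nontrivial projection onto the $\tau_1$-eigenspace of $\tilde{A}_\infty$. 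Plugging $|\nabla g(p_k)| \to 0$ into $\nabla g = f'\bigl(\nabla\lambda_1/\gamma - (\lambda_1/\gamma)(\nabla\gamma/\gamma)\bigr)$ and using $f' \geq c > 0$ then yields
\[
\frac{\nabla\lambda_1}{\gamma}(p_k) \longrightarrow I\,\tilde{A}_\infty(e_{n+1},\cdot),
\]
a nonzero vector after an appropriate choice of limiting principal frame.

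Finally, I would exploit the differential inequality for $g$. Using the lower bound of Lemma \ref{Eliptic estimate} together with $\mbox{Tr}(A) = H \leq \gamma$, the second-order term in the inequality satisfies
\[
\frac{f'}{\gamma}\,\partial^2\gamma^{ab;cd}\nabla_1 A_{ab}\nabla_1 A_{cd} \;\geq\; \frac{C\,f'\,|\nabla_1 A|^2}{\gamma H} \;\geq\; C\left|\frac{\nabla_1\lambda_1}{\gamma}\right|^2,
\]
whose limit is strictly positive by the previous step. On the other hand, $\langle \nabla g, \nabla\gamma/\gamma\rangle_\gamma$ and $\nabla_{n+1}g$ vanish in the limit (since $|\nabla g(p_k)| \to 0$ and $\nabla\gamma/\gamma$ stays bounded), so $L_\gamma g(p_k) \to \delta \geq 0$. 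Combined with the upper bound $L_\gamma g \leq -(f'/\gamma)\bigl(\partial^2\gamma^{ab;cd}\nabla_1A_{ab}\nabla_1 A_{cd} + 2\|\nabla A_{1i}\|_\gamma^2/(\lambda_i-\lambda_1)\bigr)$ from Lemma \ref{g interior minimum}, this yields $0 \leq \delta \leq -C' < 0$, the desired contradiction. The main obstacle is the nonvanishing of the limit of $\nabla_1\lambda_1/\gamma$: it could fail if $e_{n+1}$ happens to lie in an eigenspace of $\tilde{A}_\infty$ orthogonal to $\tau_1$, in which case one must instead extract strict negativity from the off-diagonal companion term $\|\nabla A_{1i}\|_\gamma^2/(\lambda_i - \lambda_1)$ by an analogous limit analysis of $\nabla\lambda_i/\gamma$ for a suitable $i \geq 2$, which requires a careful bookkeeping of the eigenspace decomposition of $\tilde{A}_\infty$.
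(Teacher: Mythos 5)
Your argument runs the paper's ingredients in the reverse direction, which is a legitimate alternative. The paper's proof takes the Omori--Yau limits as the engine: since $L_\gamma g \leq 0$ with each contribution of one sign, the second-order and off-diagonal terms must tend to zero, which via Lemma \ref{Eliptic estimate} forces $\nabla\lambda_1/\gamma(p_k) \to 0$; the gradient formula and $|\nabla g(p_k)|\to 0$ then yield $\langle\tau_i(0),e_{n+1}\rangle = 0$ for every $i$, and combined with $\gamma(0)=\langle\nu(0),e_{n+1}\rangle=0$ this gives the absurdity $e_{n+1}=0$. You instead start from the geometry: $\tilde{A}_\infty$ is nonsingular (the constraint $\lambda_1+\lambda_2>0$ forces every limiting ratio $\lambda_i/\gamma$ away from zero) and $e_{n+1}$ is a nonzero tangent vector at $0$, so $\nabla\gamma/\gamma \to \tilde{A}_\infty(e_{n+1},\cdot)\neq 0$ \emph{unconditionally} — your ``provided $e_{n+1}$ has nontrivial projection onto the $\tau_1$-eigenspace'' clause is spurious, nonsingularity suffices. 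Then $|\nabla g|\to 0$ gives $\nabla\lambda_1/\gamma \to I\,\tilde{A}_\infty(e_{n+1},\cdot)\neq 0$, the second-order term in $-L_\gamma g$ has a strictly positive lower limit, and this contradicts $L_\gamma g(p_k)\to\delta\geq 0$. Both proofs use the same facts; yours is a contrapositive arrangement and is no less valid.

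The one real gap is the step you flag yourself, but your proposed workaround over-complicates matters. The displayed bound degrades $|\nabla_1 A/\gamma|^2$ to $|\nabla_1\lambda_1/\gamma|^2$, and the latter can indeed vanish if $e_{n+1}$ has no $\tau_1$-component. However, the correct fix is already sitting in your intermediate inequality: by Codazzi, $\nabla_1 A_{1i}=\nabla_i\lambda_1$, so
\begin{align*}
|\nabla_1 A|^2 \;\geq\; \sum_i|\nabla_1 A_{1i}|^2 \;=\; \sum_i|\nabla_i\lambda_1|^2 \;=\; |\nabla\lambda_1|^2,
\end{align*}
and hence $\dfrac{f'}{\gamma}\,\partial^2\gamma^{ab;cd}\nabla_1 A_{ab}\nabla_1 A_{cd} \geq C|\nabla\lambda_1/\gamma|^2 \to C\,I^2|\tilde{A}_\infty(e_{n+1},\cdot)|^2 > 0$. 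The single second-order term already captures the \emph{entire} gradient $\nabla\lambda_1$; there is no need to invoke the off-diagonal companion term $\|\nabla A_{1i}\|_\gamma^2/(\lambda_i-\lambda_1)$ or to track the eigenspace decomposition of $\tilde{A}_\infty$. With this one-line correction the proof closes.
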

	
	\begin{proof}
	Firstly, we note that for a fixed point $p \in \Sigma_A^{-}$, Eq. \eqref{gradient g convex} can be expressed as
		\begin{align*}
			\nabla_i g=f'\left(\dfrac{\nabla_i\lambda_1}{\gamma}+ \dfrac{\lambda_1\lambda_i}{\gamma^2}\pI{\tau_i,e_{n+1}} \right).
		\end{align*}
		Moreover, we note that 
		\begin{align*}
-\dfrac{1}{C(n-2)}\leq			\lim_{k\to\infty}\dfrac{\lambda_1(p_k)}{\gamma(p_k)}<0<\lim_{k\to\infty}\dfrac{\lambda_2(p_k)}{\gamma(p_k)}\leq \ldots\leq \lim_{k\to\infty}\dfrac{\lambda_n(p_k)}{\gamma(p_k)}\leq 1.
		\end{align*}
	Then, by density and going through a diagonal subsequence, we may assume that $p_k\in\Sigma_A^{-}$, and therefore $\dfrac{\nabla_i\lambda_1(p_k)}{\gamma(p_k)}$ is bounded. 
		\newline
		
		Next, we will show that 
		\begin{align}\label{Last step g convex}
			\lim_{k\to\infty}	\dfrac{\nabla_i\lambda_1(p_k)}{\gamma(p_k)}=0, \forall i=1,\ldots,n.  
		\end{align}
	In this way, we would have
		\begin{align*}
			0=\lim_{k\to\infty}\pI{\tau_i(p_k),e_{n+1}}=\pI{\tau_i(0),e_{n+1}}.	
		\end{align*}
	Then, since $0=\gamma(0)=\pI{\nu(0),e_{n+1}}$, where $0\in\Sigma_{\infty}'$, we would obtain that $e_{n+1}=0$, which is not possible, finalizing the proof.  
		\newline
		
		Subsequently, to show \eqref{Last step g convex}, we will use the fact that 
		 $p_k$ satisfies the equations from Lemma \ref{Omori-Yau}. Therefore, the following holds at $p_k$:
		\begin{align*}
			\lim_{k\to\infty}L_\gamma g=&	\Delta_\gamma g+2\pI{\nabla g,\dfrac{\nabla\gamma}{\gamma}}_\gamma+\nabla_{n+1}g\geq \delta\in[0,\infty), 
		\end{align*}
		here we used that $\dfrac{\nabla \gamma}{\gamma}(p_k)$ is bounded.  
		\newline

		Furthermore, since 
		\begin{align*}
			L_\gamma g&=f''\norm{\nabla \left(\dfrac{\lambda_1}{\gamma}\right)}_\gamma^2 +\dfrac{f'}{\gamma}\left(-\partial^2\gamma^{\:ab;cd}\nabla_1A_{ab}\nabla_1A_{cd}-2\dfrac{\norm{\nabla A_{1i}}_\gamma^2}{\lambda_i-\lambda_1}\right)\leq 0,
		\end{align*}
		we note that $\lim\limits_{k\to\infty}L_\gamma g(p_k)=0$. 
		\newline
		Then, the term 
		\begin{align*}
			\restri{\left(\dfrac{\partial^2\gamma^{\:ab;cd}\nabla_1A_{ab}\nabla_1A_{cd}}{\gamma}+2\dfrac{\norm{\nabla A_{1i}}_\gamma^2}{\gamma(\lambda_i-\lambda_1)}\right)}{p_k}\to 0\mbox{ as }k\to \infty.
		\end{align*}
		In particular, since both terms in the above equation are positive at $p_k$,  we obtain 
		\begin{align*}
			\lim_{k\to\infty}\dfrac{\partial^2\gamma^{\:ab;cd}\nabla_1A_{ab}\nabla_1A_{cd}}{\gamma}=\lim_{k\to\infty}\dfrac{\norm{\nabla A_{1i}}_\gamma^2}{\gamma(\lambda_i-\lambda_1)}=0, \mbox{ for }i\neq 1. 
		\end{align*}
		
	In the following, each equation will be evaluated at 
	$p_k$. By Lemma \ref{Eliptic estimate}, it follows that
		\begin{align*}
			\dfrac{\partial^2\gamma^{\:ab;cd}\nabla_1A_{ab}\nabla_1A_{cd}}{\gamma}\geq \dfrac{|\nabla_1 A_{ab}|^2}{CH\gamma}\geq \left|\dfrac{\nabla_1 A_{ab}}{\gamma}\right|^2,\mbox{for all } a,b. 
		\end{align*}
Furthermore,  since $n\geq 3$, we have $\lambda_i-\lambda_1\leq H\leq \gamma$, and
		\begin{align*}
			\dfrac{\norm{\nabla A_{1i}}_\gamma^2}{\gamma(\lambda_i-\lambda_1)}\geq \dfrac{1}{C^2}\left|\dfrac{\nabla_iA_{1a}}{\gamma}\right|^2, \mbox{ for every }a\mbox{ and }i\neq 1.
		\end{align*}
Thus, we finally obtain that $\dfrac{\nabla \lambda_1}{\gamma}\to 0$ as $k\to\infty$, thereby  concluding the proof of Lemma \ref{not at infinity convex}.
	\end{proof}
	
Now, we continue with the case where  $\gamma$ is strictly concave in off-radial directions such that $\gamma\leq H$. For this purpose, we consider the function $\tilde{g}:\Sigma\to (-\infty,0)$ given by 
	\begin{align*}
		\tilde{g}(p)=\tilde{f}\left(\dfrac{\gamma(\lambda(p))}{H(p)-\lambda_1(p)}\right),
	\end{align*}
where $\tilde{f}(r)=f(r-1)$ and $f$ is given in Eq. \eqref{f}. We note that $\tilde{f}''<0$ and $\tilde{f}'>0$, and
both  are bounded away from $0$ on $[-r_2, -r_1]$, for any $0<r_1 < r_2<1$. 
\newline

	Firstly, since $\lambda\in\tilde{\Gamma}$, we observe that 
	\begin{align*}
		0<\dfrac{\gamma}{H-\lambda_1}\leq\dfrac{H}{H-\lambda_1}=1-\dfrac{\lambda_1}{H-\lambda_1}\leq2.  
	\end{align*}
	Therefore, we obtain $\tilde{g}(p)\in(\tilde{f}(0),0]$ for every $p\in\Sigma$. 	
	\newline

	Moreover, if $\Sigma$ were not convex, we would have that $g(p)$ is smooth on 
	\begin{align*}
		\Sigma^-=\set{p\in\Sigma: \tilde{g}(p)\in [\tilde{\beta}, \tilde{f}(1))}, 
	\end{align*}
	where $\tilde{\beta}=\tilde{f}(b) $, since $\lambda_1<0<\lambda_2$ and 
	\begin{align*}
		0<b=\dfrac{C^{-1}}{1+\frac{1}{n-2}}\leq\dfrac{\frac{\gamma}{H}}{1-\frac{\lambda_1}{H}}=\dfrac{\gamma}{H-\lambda_1}\leq \dfrac{\gamma}{H}\leq 1.
	\end{align*}

	Then, we have the following equations 
	\begin{align}\label{gradient g2}
		\nabla_i\tilde{g}&=\tilde{f}'\left(\dfrac{\nabla_i \gamma}{H-\lambda_1} -\dfrac{\gamma}{(H-\lambda_1)^{2}}\nabla_i(H-\lambda_1)\right),
	\end{align}
\begin{align*}	
		L_\gamma \tilde{g}&=	\Delta_\gamma \tilde{g}+2\pI{\nabla \tilde{g},\dfrac{\nabla(H-\lambda_1)}{H-\lambda_1}}_\gamma+\nabla_{n+1}\tilde{g}
		\\ \notag
		&=\tilde{f}''\norm{\nabla \left(\dfrac{\gamma}{H-\lambda_1}\right)}_\gamma^2 +\dfrac{\tilde{f}'(1-\delta_{1i})}{H-\lambda_1}\left(\partial^2\gamma^{\:ab;cd}\nabla_1A_{ab}\nabla_1A_{cd}-2\dfrac{\norm{\nabla A_{1i}}_\gamma^2}{\lambda_i-\lambda_1}\right)
		\\ \notag
		&\leq 0.
	\end{align*}
	
	Next, we will assume by contradiction that $\Sigma$ is not convex, i.e., 
	\begin{align*}
		J=\inf_\Sigma \dfrac{\gamma}{H-\lambda_1}\in [b,1)\Leftrightarrow \inf_\Sigma \tilde{g}(p)\in [\tilde{\beta}, 0).
	\end{align*}
	\begin{lemma}\label{Claim 1 Stric Concave}
	The function	$\tilde{g}$ cannot attain a negative minimum in $\Sigma^{-}$. 
	\end{lemma}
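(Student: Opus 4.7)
The plan is to adapt the strategy of Lemma \ref{g interior minimum} to the strictly concave setting, with the key new ingredient being a direct use of the evolution equations \eqref{gamma} and \eqref{H} to close the argument. First, I would suppose for contradiction that $\tilde{g}$ attains a negative minimum at some $p_0 \in \Sigma^-$. Since $L_\gamma \tilde{g} \leq 0$ on the dense open subset $\Sigma_A^-$ of $\Sigma^-$, the strong maximum principle will force $\tilde{g}$ to be constant on the connected component of $\Sigma^-$ containing $p_0$.

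Next, I would use constancy of $\tilde{g}$, together with $\tilde{f}'' < 0$, $\tilde{f}' > 0$, and the positivity of $1/(H-\lambda_1)$, to force each non-positive piece in the expression for $L_\gamma \tilde{g}$ to vanish on $\Sigma_A^-$. This yields three conditions: (i) $\nabla\!\left(\gamma/(H-\lambda_1)\right) = 0$, so $\gamma = c(H-\lambda_1)$ for some constant $c \in [b,1)$; (ii) $\norm{\nabla A_{1i}}_\gamma^2 = 0$ for every $i \neq 1$, whence by Codazzi $\nabla_j A_{1i} = 0$ for all $j$ and $i \neq 1$, and in particular $\nabla_i \lambda_1 = 0$ for $i \neq 1$; and (iii) $\partial^2\gamma^{ab;cd}\nabla_1 A_{ab}\nabla_1 A_{cd} = 0$, which by Lemma \ref{Eliptic estimate} (strict concavity in off-radial directions) forces $\nabla_1 A_{ab} = 0$ for all $a,b$, and in particular $\nabla_1 \lambda_1 = 0$. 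Together, $\nabla \lambda_1 \equiv 0$, so $\lambda_1$ is constant on the component.

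With $\lambda_1$ constant and $\gamma = c(H - \lambda_1)$, I would then pass to the derivatives $\nabla \gamma = c\,\nabla H$, $\Delta_\gamma \gamma = c\,\Delta_\gamma H$, $\nabla_{n+1}\gamma = c\,\nabla_{n+1}H$. Substituting these relations into \eqref{gamma} and subtracting the resulting identity from \eqref{H} should yield, after a short algebraic manipulation,
\begin{equation*}
\partial^2\gamma^{ab;cd}\nabla_i A_{ab}\nabla_i A_{cd} \;=\; -|A|_\gamma^2\,\lambda_1.
\end{equation*}
The right-hand side is strictly positive because $\lambda_1 < 0$ and $|A|_\gamma^2 > 0$, whereas the left-hand side is non-positive by the concavity of $\gamma$. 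This is the desired contradiction.

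The main obstacle will be this last step: unlike in the convex case of Lemma \ref{g interior minimum}, the constancy of $\lambda_1$ does not immediately force $\gamma$ (equivalently $H$) to be constant, since step (i) only delivers a proportionality between $\gamma$ and $H-\lambda_1$ rather than direct constancy. The resolution is to avoid asking for $\gamma$ constant at all and instead exploit this proportionality to transfer the zeroth-order identity \eqref{gamma} into a sign-conflicting identity for the second-order term appearing in \eqref{H}.
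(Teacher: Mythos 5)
There is a gap in step (iii): the term $\partial^2\gamma^{ab;cd}\nabla_1 A_{ab}\nabla_1 A_{cd}$ does not actually appear in $L_\gamma\tilde{g}$, so constancy of $\tilde{g}$ cannot force it to vanish. When one forms $\Delta_\gamma(H-\lambda_1)$ from \eqref{H} and \eqref{Dif lambda}, the $i=1$ contribution to the second-order term in $\Delta_\gamma H$ cancels exactly against that coming from $\Delta_\gamma\lambda_1$; the residual second-order piece is $\sum_{i\neq 1}\partial^2\gamma^{ab;cd}\nabla_i A_{ab}\nabla_i A_{cd}$. (The paper's first displayed formula for $L_\gamma\tilde{g}$, written with $\nabla_1A_{ab}$, is a typographical slip; the later appearances carry $\nabla_iA_{ab}$ together with the $(1-\delta_{1i})$ factor.) Consequently what strict concavity and Lemma \ref{Eliptic estimate} correctly yield is $\nabla_i A_{ab}=0$ for $i\neq 1$ and all $a,b$ --- which is exactly what the paper extracts --- and not $\nabla_1 A_{ab}=0$, so your derivation of $\nabla_1\lambda_1=0$ does not go through as written.

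The conclusion $\nabla\lambda_1\equiv 0$ is nevertheless recoverable by a lighter route. From step (ii) and Codazzi one has $\nabla_1\lambda_a=\nabla_a\lambda_1=0$ for $a\neq1$, hence $\nabla_1(H-\lambda_1)=0$; step (i) then gives $\nabla_1\gamma=w\,\nabla_1(H-\lambda_1)=0$, and since $\nabla_1\gamma=\partial_1\gamma\,\nabla_1\lambda_1$ (the other terms vanish), property \ref{c)} forces $\nabla_1\lambda_1=0$. Together with $\nabla_i\lambda_1=0$ for $i\neq1$ this gives $\nabla\lambda_1\equiv0$ on the component. With that repair your proportionality trick is correct: substituting $\gamma=c(H-\lambda_1)$ and $\nabla\lambda_1=0$ into \eqref{gamma} and $c$ times \eqref{H} and subtracting yields $\sum_i\partial^2\gamma^{ab;cd}\nabla_i A_{ab}\nabla_i A_{cd}=-|A|_\gamma^2\lambda_1>0$, contradicting concavity. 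The paper closes differently: it uses strict concavity to get $\nabla_iA_{ab}=0$ for $i\neq1$, hence $|\nabla(H-\lambda_1)|=0$, hence $\gamma$ constant (combining with step (i)), and then Eq. \eqref{gamma} directly gives $|A|_\gamma^2\gamma=0$, impossible since $\lambda_1<0<\lambda_2$. Your corrected route is thus a genuinely different way to finish, and has the notable feature of not invoking strict concavity in off-radial directions for this particular lemma.
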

	\begin{proof}
		As in the proof of Lemma \ref{g interior minimum}, we assume by contradiction $\tilde{g}$ attains its minimum at $p_0\in\Sigma^-$. Then, since  $L_\gamma(\tilde{g})\leq 0$ on $\Sigma^-$, the strong maximum principle implies that $\tilde{g}$ is constant in $\Sigma^-$.
		\\
		
		Moreover, we note that in any connected component of $\Sigma_A^{-}$, the following equations holds
		\begin{align*}
			\norm{\nabla \left(\dfrac{\gamma}{H-\lambda_1}\right)}_\gamma^2=(1-\delta_{1i})\partial^2\gamma^{\:ab;cd}\nabla_iA_{ab}\nabla_iA_{cd}=(1-\delta_{1i})\norm{\nabla A_{1i}}_\gamma^2=0.
		\end{align*}
		Then, at any fixed point $p\in\Sigma_A^-$ the following equation
		\begin{align*}
			0=\norm{\nabla A_{1i}}_\gamma^2=\partial\gamma^{ab}\nabla_aA_{1i}\nabla_bA_{1i}=\dfrac{\partial\gamma}{\partial\lambda_j}|\nabla_j A_{1i}|^2
		\end{align*}
	which implies that $|\nabla_j A_{1i}|(p)=0$ for every $j$ and $i\neq 1$. Consequently,  by the Codazzi equations, we obtain 
		\begin{align*}
			0=\nabla_1\lambda_i=\nabla_i\lambda_1 \mbox{ for }i\neq 1. 
		\end{align*}
		
		In addition, since $\gamma$ is strictly concave in off-radial directions, we have by Lemma \ref{Eliptic estimate} that
		\begin{align*}
			|\nabla_iA_{ab}|=0, \mbox{for } i\neq 1. 
		\end{align*}
Particularly, by the Codazzi equation, we obtain
		\begin{align*}
			\nabla_i\lambda_{a}=0\mbox{ for }i\neq 1 \mbox{ and }a=1,\ldots,n. 
		\end{align*}
		Therefore, on every connected component of $\Sigma_A^{-}$,  we get  $|\nabla (H-\lambda_1)|=0$. This implies that $\gamma$ is constant in any connected components of $\Sigma_A^-$, since $\tilde{g}$ is constant in $\Sigma_A^{-}$ and $\tilde{g}'(r)>0$.  However, as in the proof of Lemma \ref{g interior minimum}, this cannot occurs.
	\end{proof}
	
	Next, as in the convex case, we will use the Omori-Yau maximum principle on the sequence of translators given by $\Sigma_k=\set{p+p_k:p\in\Sigma}$, where $p_k\in\Sigma$ is a sequence such that $\lim\limits_{k\to\infty}\tilde{g}(p_k)=\tilde{f}(J)<\tilde{f}(1)=0$.  
Then, by going through another subsequence if necessary, we have that there exists a smooth complete $\gamma$-translators  $\Sigma_{\infty }\subset\rr^{n+1}$ containing the origin such that $\Sigma_k\to\Sigma_\infty$ in the smooth topology on compact subsets of $\Sigma_\infty$.
	\newline
	
	Let $\Sigma_\infty'$ be the connected component that contains the origin of $\Sigma_\infty$. Then,  we observe that at the origin  of $\Sigma_\infty'$, $	\gamma(0)=\lim\limits_{k\to\infty}\gamma(p_k)=0$, because if it were not so, we would have  $\inf\limits_\Sigma \tilde{g}(p)\in (\tilde{\beta},0)$ and by Claim \ref{Claim 1 Stric Concave}, we would obtain that  $\Sigma_\infty'$ is the empty set, which is not possible. 
	\newline
	
	Furthermore, we also note that $H(0)=\lim\limits_{k\to\infty}H(p_k)= 0$ since $\gamma H^{-1}\in[C^{-1},C]$. In particular, $\lambda_i(0)=\lim\limits_{k\to\infty}\lambda_i(p_k)=0$ since $|A|\leq H$.   
	
	\begin{lemma}\label{not at infinity concave}
	The function $\tilde{g}(p)$ cannot be negative at $0\in\Sigma_{\infty}'$. 
	\end{lemma}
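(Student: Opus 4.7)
The plan is to mirror, step for step, the proof of Lemma \ref{not at infinity convex}, with the convex function $g = f(\lambda_1/\gamma)$ replaced by $\tilde{g} = \tilde{f}(\gamma/(H-\lambda_1))$ and the role of strict convexity in off-radial directions replaced by strict concavity via Lemma \ref{Eliptic estimate}. I argue by contradiction, assuming $\tilde{g}(0) < 0$, i.e., $J := \lim_{k\to\infty}\gamma(p_k)/(H-\lambda_1)(p_k) \in [b,1)$. From the earlier observation, all $\lambda_i(0) = 0$ and $\gamma(0) = 0$, so the translator equation gives $\nu(0)\perp e_{n+1}$ and hence $e_{n+1}\in T_0\Sigma_\infty'$. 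The goal is to prove $\pI{\tau_i(0),e_{n+1}} = 0$ for every $i$, which together with $\nu(0)\perp e_{n+1}$ forces $e_{n+1} = 0$, contradicting $|e_{n+1}|=1$.

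First I would show $L_\gamma\tilde{g}(p_k)\to 0$. The Omori--Yau output gives $\Delta_\gamma\tilde{g}(p_k)\to\delta\geq 0$, $|\nabla\tilde{g}(p_k)|\to 0$, and $\nabla_{n+1}\tilde{g}(p_k)\to 0$ trivially. For the drift term $2\pI{\nabla\tilde{g},\nabla(H-\lambda_1)/(H-\lambda_1)}_\gamma$, I would check that $\nabla(H-\lambda_1)/(H-\lambda_1)$ stays bounded along $p_k$: solving the identity $\nabla(\gamma/(H-\lambda_1)) = \nabla\gamma/(H-\lambda_1) - (\gamma/(H-\lambda_1))\nabla(H-\lambda_1)/(H-\lambda_1)$ for the last factor and using $\gamma/(H-\lambda_1)\to J\neq 0$, $\nabla(\gamma/(H-\lambda_1)) = \nabla\tilde{g}/\tilde{f}'\to 0$, and $\nabla_i\gamma/(H-\lambda_1) = -(\lambda_i/(H-\lambda_1))\pI{\tau_i,e_{n+1}}$ uniformly bounded in $\tilde{\Gamma}$, delivers the bound. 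Since $L_\gamma\tilde{g}\leq 0$, the squeeze then forces $L_\gamma\tilde{g}(p_k)\to 0$.

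Next, since the two summands appearing in the decomposition of $-L_\gamma\tilde{g}$ are both non-negative, each must vanish in the limit. The strict-concavity summand together with Lemma \ref{Eliptic estimate} and the comparability $H\sim(H-\lambda_1)$ inside $\tilde{\Gamma}$ yields $|\nabla_i A_{ab}|/(H-\lambda_1)\to 0$ for every $i\neq 1$ and all $a,b$; the $\|\nabla A_{1i}\|_\gamma^2/(\lambda_i-\lambda_1)$-summand, combined with the uniform ellipticity of $\gamma$ on $\tilde{\Gamma}$ and the estimate $\lambda_i-\lambda_1\leq H-\lambda_1$, yields $|\nabla_j A_{1i}|/(H-\lambda_1)\to 0$ for all $j$ and every $i\neq 1$. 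The Codazzi identities then propagate these into $|\nabla_i(H-\lambda_1)|/(H-\lambda_1)\to 0$ for every $i$. Substituting into
\begin{align*}
\frac{\nabla_i\tilde{g}}{\tilde{f}'} \;=\; -\frac{\lambda_i}{H-\lambda_1}\pI{\tau_i,e_{n+1}} \;-\; \frac{\gamma}{H-\lambda_1}\cdot\frac{\nabla_i(H-\lambda_1)}{H-\lambda_1}
\end{align*}
and using $|\nabla\tilde{g}(p_k)|\to 0$ with $\tilde{f}'$ bounded away from zero, I obtain $\mu_i\pI{\tau_i(0),e_{n+1}} = 0$ for every $i$, where $\mu_i := \lim_k\lambda_i(p_k)/(H-\lambda_1)(p_k)$.

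The hardest part will be upgrading this to $\pI{\tau_i(0),e_{n+1}}=0$ for every $i$, not only for those with $\mu_i\neq 0$. Since $\sum_{i\neq 1}\mu_i = 1$ and $\mu_1\leq 0$, not all $\mu_i$ vanish, and the $\tilde{\Gamma}$-condition $\mu_1+\mu_2\geq 0$ together with $J = \gamma(\mu) \leq H(\mu) = 1+\mu_1$ pins $\mu_1\in[J-1,0]$, covering the first direction whenever $\mu_1 < 0$ strictly. The degenerate alternative where $e_{n+1}$ lies in $\mathrm{span}\{\tau_i(0):\mu_i = 0\}$ is the subtle point, and I would try to rule it out by exploiting that $\Sigma_\infty'$ is a smooth complete $\gamma$-translator together with the equations in Lemma \ref{equations} at $0$, in direct analogy with the way the convex case of Lemma \ref{not at infinity convex} is concluded. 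Once all $\pI{\tau_i(0),e_{n+1}}$ vanish, the contradiction $e_{n+1}=0$ follows.
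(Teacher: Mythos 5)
Your proposal mirrors the paper's proof almost step for step: the squeeze $L_\gamma\tilde g(p_k)\to 0$, the decomposition of the second-order term into two non-positive pieces, the use of Lemma~\ref{Eliptic estimate} and uniform ellipticity on $\tilde\Gamma$ to force $|\nabla_iA_{ab}|/(H-\lambda_1)\to 0$ for $i\neq 1$ and $|\nabla_j A_{1i}|/(H-\lambda_1)\to 0$ for $i\neq 1$, and then Codazzi to conclude $\nabla(H-\lambda_1)/(H-\lambda_1)\to 0$, after which the rewritten gradient formula yields $\mu_i\pI{\tau_i(0),e_{n+1}}=0$ where $\mu_i:=\lim_k\lambda_i(p_k)/(H-\lambda_1)(p_k)$. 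This is exactly what the paper does; you just spell out the substitution that the paper leaves implicit with ``the same contradiction as in the convex case.''

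The ``hardest part'' you flag --- upgrading $\mu_i\pI{\tau_i(0),e_{n+1}}=0$ to $\pI{\tau_i(0),e_{n+1}}=0$ for every $i$ --- is precisely what the paper claims to handle in the preliminary paragraph of its proof: after listing the inequalities $\tfrac{-1}{n-1}\le \tfrac{\lambda_1}{H-\lambda_1}<0$ and $0<1-\tfrac{\gamma}{H-\lambda_1}\le\tfrac{-\lambda_1}{H-\lambda_1}<\tfrac{\lambda_2}{H-\lambda_1}\le\cdots\le\tfrac{\lambda_n}{H-\lambda_1}\le 1$, it asserts that ``since $J<1$, the limits $\lim_k\lambda_i/(H-\lambda_1)$ are all bounded away from zero,'' i.e. all $\mu_i\neq 0$, so no degenerate subspace $\mathrm{span}\{\tau_i(0):\mu_i=0\}$ can appear. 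You should therefore fill the gap by invoking that preliminary claim rather than appealing to the regularity of $\Sigma_\infty'$ as you suggest. Note, however, that your caution is not unreasonable: the middle inequality in the paper's chain, $1-\tfrac{\gamma}{H-\lambda_1}\le 1-\tfrac{H}{H-\lambda_1}$, is equivalent to $\gamma\ge H$, whereas the standing hypothesis in the concave case is $\gamma\le H$; so the justification in the paper of the ``bounded away from zero'' step is not as clean as it looks, and your worry that $\mu_1$ might tend to $0$ (and with it $\mu_2$) is a real subtlety, not one that the displayed chain obviously resolves.
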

	
	\begin{proof}
		Firstly, we note that for a fixed point $p\in\Sigma_A^{-}$, Eq. \eqref{gradient g convex} can be expressed as  
		\begin{align*}
			\nabla_i \tilde{g}=\tilde{f}'\left(\dfrac{\lambda_i\pI{\tau_i,e_{n+1}}}{H-\lambda_1}+ \dfrac{\gamma\nabla(H-\lambda_1)}{(H-\lambda_1)^2} \right).
		\end{align*}
		Moreover, since $\lambda_2+\lambda_1>0$. at $p_k$, we have 
		\begin{align*}
		&\dfrac{-1}{n-1}\leq\dfrac{\lambda_1}{H-\lambda_1}<0,
		\\
		&0<1-\dfrac{\gamma}{H-\lambda_1}\leq 1-  \dfrac{H}{H-\lambda_1}=\dfrac{-\lambda_1}{H-\lambda_1}<\dfrac{\lambda_2}{H-\lambda_1}\leq \ldots\leq \dfrac{\lambda_n}{H-\lambda_1}\leq1.
		\end{align*}
	Then,  since $J<1$, the limits $\lim\limits_{k\to\infty}\dfrac{\lambda_i}{H-\lambda_1}$ are all bounded away from zero. Therefore, by density, we may assume that  $p_k\in\Sigma_A^{-}$, and hence $\dfrac{\nabla\gamma}{H-\lambda_1}$ is uniformly bounded at $p_k$. 
		\newline
		
		Considering the above, we will show that 
		\begin{align}\label{Last step g concave}
			\lim_{k\to\infty}	\dfrac{\nabla_i(H(p_k)-\lambda_1(p_k))}{H(p_k)-\lambda_1\gamma(p_k)}=0, \forall i=1,\ldots,n.  
		\end{align}
		In this way, we would obtain the same contradiction as in the convex case, finalizing the proof.  
		\newline
		
		Subsequently, we observe that Lemma \eqref{Omori-Yau} implies
		\begin{align*}
			\lim_{k\to\infty}L_\gamma \tilde{g}=&	\Delta_\gamma \tilde{g}+2\pI{\nabla \tilde{g},\dfrac{\nabla\gamma}{\gamma}}_\gamma+\nabla_{n+1}\tilde{g}\geq \delta\in[0,\infty).
		\end{align*}
	On the other hand, 
		\begin{align*}
			L_\gamma \tilde{g}&=\tilde{f}''\norm{\nabla \left(\dfrac{\gamma}{H-\lambda_1}\right)}_\gamma^2 +\dfrac{\tilde{f}'(1-\delta_{1i})}{H-\lambda_1}\left(\partial^2\gamma^{\:ab;cd}\nabla_iA_{ab}\nabla_iA_{cd}-2\dfrac{\norm{\nabla A_{1i}}_\gamma^2}{\lambda_i-\lambda_1}\right)
			\\
			&\leq 0.
		\end{align*}
Therefore, we obtain  $\lim\limits_{k\to\infty}L_\gamma \tilde{g}(p_k)=0$. 
\newline

Then, the term 
		\begin{align*}
			\restri{\left(\dfrac{\partial^2\gamma^{\:ab;cd}\nabla_iA_{ab}\nabla_iA_{cd}}{H-\lambda_1}-\dfrac{2\norm{\nabla A_{1i}}_\gamma^2}{(H-\lambda_1)(\lambda_i-\lambda_1)}\right)}{p_k}\to 0\mbox{ as }k\to \infty, \mbox{ for }i\neq 1.
		\end{align*}
	In particular, since both terms in the above equation are non-positive, we have
		\begin{align*}
			\lim_{k\to\infty}\dfrac{\partial^2\gamma^{\:ab;cd}\nabla_1A_{ab}\nabla_1A_{cd}}{H-\lambda_1}=\lim_{k\to\infty}\dfrac{\norm{\nabla A_{1i}}_\gamma^2}{(H-\lambda_1)(\lambda_i-\lambda_1)}=0, \mbox{ for }i\neq 1. 
		\end{align*}
Furthermore,  by Lemma \ref{Eliptic estimate}, we observe that
		\begin{align*}
			-\dfrac{\partial^2\gamma^{\:ab;cd}\nabla_iA_{ab}\nabla_iA_{cd}}{H-\lambda_1}\geq \dfrac{|\nabla_i A_{ab}|^2}{H(H-\lambda_1)}\geq \left|\dfrac{\nabla_i A_{ab}}{H-\lambda_1}\right|^2,
		\end{align*}
		for all $a,b$ and $i\neq 1$.  Moreover, since 
		\begin{align*}
			\dfrac{\norm{\nabla A_{1i}}_\gamma^2}{(H-\lambda_1)(\lambda_i-\lambda_1)} \geq \dfrac{1}{C^2}\left|\dfrac{\nabla_1 A_{ai}}{H-\lambda_1}\right|^2, \mbox{ for every } a \mbox{ and } i \neq 1,
		\end{align*}
		we finally obtain that \( \dfrac{\nabla (H-\lambda_1)}{H-\lambda_1} \to 0 \) as \( k \to \infty \), finishing Lemma \ref{not at infinity concave}.
		\end{proof}

	\section{\textbf{Grim-reaper cylinders}}\label{Grim}\,
	We recall from the introduction that for a given non-degenerate normalized curvature function \( \gamma: \overline{\Gamma} \to [0, \infty) \), i.e., \( \gamma(1, 0, \ldots, 0) = 1 \), a Grim Reaper cylinder is a $\gamma$-translator isometric to \( \mathbb{R}^n \times G^\pi \), where
	\[
	G^\pi = \left\{ (x, -\ln(\sin(x))) : x \in (0, \pi) \right\} \subset \mathbb{R}^2
	\]
	is the Grim Reaper curve. More precisely, a Grim Reaper cylinder is the graph of a function defined on a slab of width \( \omega \geq \pi \), given by 	$u_\omega: \left(0, \omega\right) \times \mathbb{R}^{n-1} \to \mathbb{R}$
	\begin{align*}
	u_\omega(x) = -\left(\frac{\omega}{\pi}\right)^2 \ln\left(\sin\left(\frac{\pi}{\omega} x_1\right)\right) + x_{n} \sqrt{\left(\frac{\omega}{\pi}\right)^2 - 1}.
	\end{align*}
In fact, by setting \( G^\omega = \text{Graph}(u_\omega) \), the coefficients of the shape operator of \( G^\omega \) with respect to the outward unit normal vector 
are given by \( A^i_j = \frac{\pi}{\omega} \sin\left(\frac{\pi}{\omega} x_1\right) \) for \( i = j = 1 \), and \( A^i_j = 0 \) otherwise. Therefore, the principal curvatures of \( G^\omega \) are
\begin{align*}
	\lambda_n = \ldots = \lambda_2 = 0 \quad \text{and} \quad \lambda_1 = \frac{\pi}{\omega} \sin\left(\frac{\pi}{\omega} x_1\right).
\end{align*}
Consequently, since \( \gamma \) is normalized, we have
\begin{align*}
	\gamma(\lambda) = \lambda_1 \gamma(1, 0, \ldots, 0) = \langle \nu, e_{n+1} \rangle \quad \text{on } G^\omega,
\end{align*}
implying that \( G^\omega \) is a convex \( \gamma \)-translator in \( \mathbb{R}^{n+1} \).
	\newline
	
	Due to the following maximum principle proved in \cite[Theorem 1.2]{Yo}: 	
	\begin{theorem}\label{Tangency}
		Let $\Sigma_1,\Sigma_2\subset\rr^{n+1}$ be two connected $\gamma$-translators  such that
		\begin{enumerate}
			\item $\gamma:\Gamma\to[0,\infty)$ satisfies properties \ref{a)}-\ref{c)} on $\Gamma$.
			\item $\Sigma_1$ is strictly convex (i.e.: $\lambda\in\Gamma_+$ on $\Sigma_1$).
			\item $\Sigma_2$ is convex (i.e.: $\lambda\in\overline{\Gamma}_+$ in $\Sigma_2$) not totally geodesic.
		\end{enumerate}
		Then,
		\begin{enumerate}[a)]
			\item (\textbf{Interior tangential principle}) Assume that there exists an interior point $p\in \Sigma_1\cap \Sigma_2$ such that the tangent spaces coincide at $p$. If $\Sigma_1$ lies at one side of $\Sigma_2$, then both hypersurfaces coincide.
			
			\item (\textbf{Boundary tangential principle}) Assume that the boundaries $\partial \Sigma_i$ lie in the same hyperplane $\Pi$ and the intersection of $\Sigma_i$ with $\Pi$ is transversal. If $\Sigma_1$ lies at one side of $\Sigma_2$ and there exist $p\in \partial \Sigma_1\cap \partial \Sigma_2$ such that the tangent spaces to $\Sigma_i$ and $\partial\Sigma_i$ coincide, then both hypersurfaces coincide. 
		\end{enumerate}
	\end{theorem}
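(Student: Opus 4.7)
The plan is to reduce both parts to the same linear uniformly elliptic equation for the difference of the two graphs, and then apply the strong maximum principle (interior case) or Hopf's boundary point lemma (boundary case).

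First, for part (a), I would choose Euclidean coordinates so that the common tangent point $p$ is the origin and the common tangent plane is $\rr^n\times\set{0}$. In a neighborhood $U$ of the origin, each $\Sigma_i$ is the graph of a smooth function $u_i:U\to\rr$ with $u_i(0)=0$ and $\nabla u_i(0)=0$; after relabeling, the one-sided assumption becomes $w:=u_1-u_2\geq 0$ on $U$. The translator equation \eqref{gamma-trans} rewrites as a fully nonlinear PDE
\begin{align*}
G(D^2u,Du):=\gamma(\lambda[u])-(1+|Du|^2)^{-1/2}=0,
\end{align*}
with $G$ smooth on the open set where $\lambda[u]\in\Gamma$; property \ref{c)} makes $\partial G/\partial u_{ij}$ positive definite whenever $\lambda[u]\in\Gamma_+$.

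Next, I would linearize along $u_s=su_1+(1-s)u_2$ for $s\in[0,1]$. Since $D^2u_1(0)>0$ by strict convexity of $\Sigma_1$ and $D^2u_2(0)\geq 0$ by convexity of $\Sigma_2$, the convex combination $D^2u_s(0)$ is positive definite for every $s\in(0,1]$, so $\lambda[u_s]\in\Gamma_+$ on a (possibly smaller) neighborhood of $0$ uniformly for $s$ bounded away from $0$. The fundamental theorem of calculus then yields
\begin{align*}
0=G(D^2u_1,Du_1)-G(D^2u_2,Du_2)=a^{ij}(x)w_{ij}(x)+b^i(x)w_i(x),
\end{align*}
where $a^{ij}(x)=\int_0^1 (\partial G/\partial u_{ij})(D^2u_s(x),Du_s(x))\,ds$ and analogously for $b^i$. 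The ellipticity estimate uses strict convexity of $\Sigma_1$: restricting the integral to a subinterval $[s_0,1]\subset(0,1]$ already produces a positive definite matrix dominating $a^{ij}$ from below uniformly near $0$, so $a^{ij}$ is uniformly elliptic on a neighborhood of $0$.

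Having arranged that $w\geq 0$ solves a linear uniformly elliptic equation with $w(0)=0$, the strong maximum principle forces $w\equiv 0$ in a neighborhood of $p$, and a standard open-closed argument, repeating the local conclusion at each coincidence point, extends this to $\Sigma_1=\Sigma_2$ by connectedness. For part (b), I would take coordinates adapted to $\Pi$ so that both $\Sigma_i$ are graphs over a half-ball whose flat face lies in $\Pi$; transversality guarantees the domains agree. The same linearization produces a linear elliptic equation for $w\geq 0$ vanishing on a piece of the boundary. The simultaneous tangency of $\partial\Sigma_i$ within $\Pi$ and of $T_p\Sigma_i$ at $p$ forces both $w(p)=0$ and $\nabla w(p)=0$, contradicting Hopf's boundary point lemma unless $w\equiv 0$ in a half-neighborhood; global coincidence follows as before.

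The main obstacle is controlling $a^{ij}$ on the $\Sigma_2$ side when some component of $\lambda[u_2]$ touches $\partial\Gamma$: there $\partial\gamma/\partial\lambda_i$ need not be defined, let alone positive. The argument above circumvents this by discarding the contribution of the integral for $s$ close to $0$ and using only the strictly elliptic contribution from a neighborhood of $s=1$, which is available precisely because of the strict convexity of $\Sigma_1$. Everything else is a standard application of the linear maximum principle together with the connectedness propagation.
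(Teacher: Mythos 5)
The paper does not give an argument here---it states the result as \cite[Theorem 1.2]{Yo}, the author's companion paper---so there is no internal proof to compare against. Your strategy of writing both hypersurfaces as graphs over the common tangent plane, linearizing the translator equation along $u_s = su_1+(1-s)u_2$ via the fundamental theorem of calculus, and invoking the strong maximum principle (resp.\ Hopf's lemma at the boundary) is the standard route for tangency principles of this kind, and its overall structure is sound.

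There is, however, a genuine gap in your treatment of the degeneracy on the $\Sigma_2$ side. You correctly flag the danger: if some $\lambda_i[u_2]$ lands on $\partial\Gamma_+$ (or on $\partial\Gamma$), then $\partial\gamma/\partial\lambda_i$ along $u_s$ can blow up or be undefined as $s\to 0$, so the coefficient $a^{ij}(x)=\int_0^1(\partial G/\partial u_{ij})(D^2u_s,Du_s)\,ds$ need not be a finite, well-defined matrix. Your remedy---``discard the contribution of the integral for $s$ close to $0$''---is not a legal move: the lower bound $a^{ij}\geq\int_{s_0}^1$ presupposes that the full integral exists, and if $\int_0^{s_0}$ diverges there is no linear elliptic operator at all to which the maximum principle can be applied. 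The gap closes once you add a short rigidity observation you have not made. With $\nabla u_1(0)=\nabla u_2(0)=0$, one-sidedness gives $D^2u_1(0)\geq D^2u_2(0)$ (or the reverse inequality), and since both surfaces satisfy the same translator equation, $\gamma(\lambda[D^2u_1(0)])=\gamma(\lambda[D^2u_2(0)])=\langle\nu,e_{n+1}\rangle(0)$. Because property \ref{c)} makes $\boldsymbol{\gamma}$ strictly monotone in the symmetric-matrix order, the combination of the matrix inequality with equality of $\gamma$-values forces $D^2u_1(0)=D^2u_2(0)\in\Gamma_+$. Hence $\Sigma_2$ is in fact strictly convex at the tangency point, and on a small enough neighborhood of $p$ the whole segment $\{\lambda[u_s]:s\in[0,1]\}$ stays inside a compact subset of $\Gamma_+\subset\Gamma$; the linearization is then smooth and uniformly elliptic at every $s\in[0,1]$, with no singularity at either endpoint, and the rest of your argument (strong maximum principle, open-closed propagation, and Hopf's lemma for part (b), where the same Hessian rigidity applies at the boundary tangency) goes through. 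Without this observation, the argument is incomplete for any curvature function whose gradient is unbounded near $\partial\Gamma_+$, e.g.\ $\gamma=K^{1/n}$.
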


Now, we give prove Thm. \ref{non-existence} restated here.
			\begin{figure}[t]
		\centering
		\includegraphics[width=0.4\textwidth]{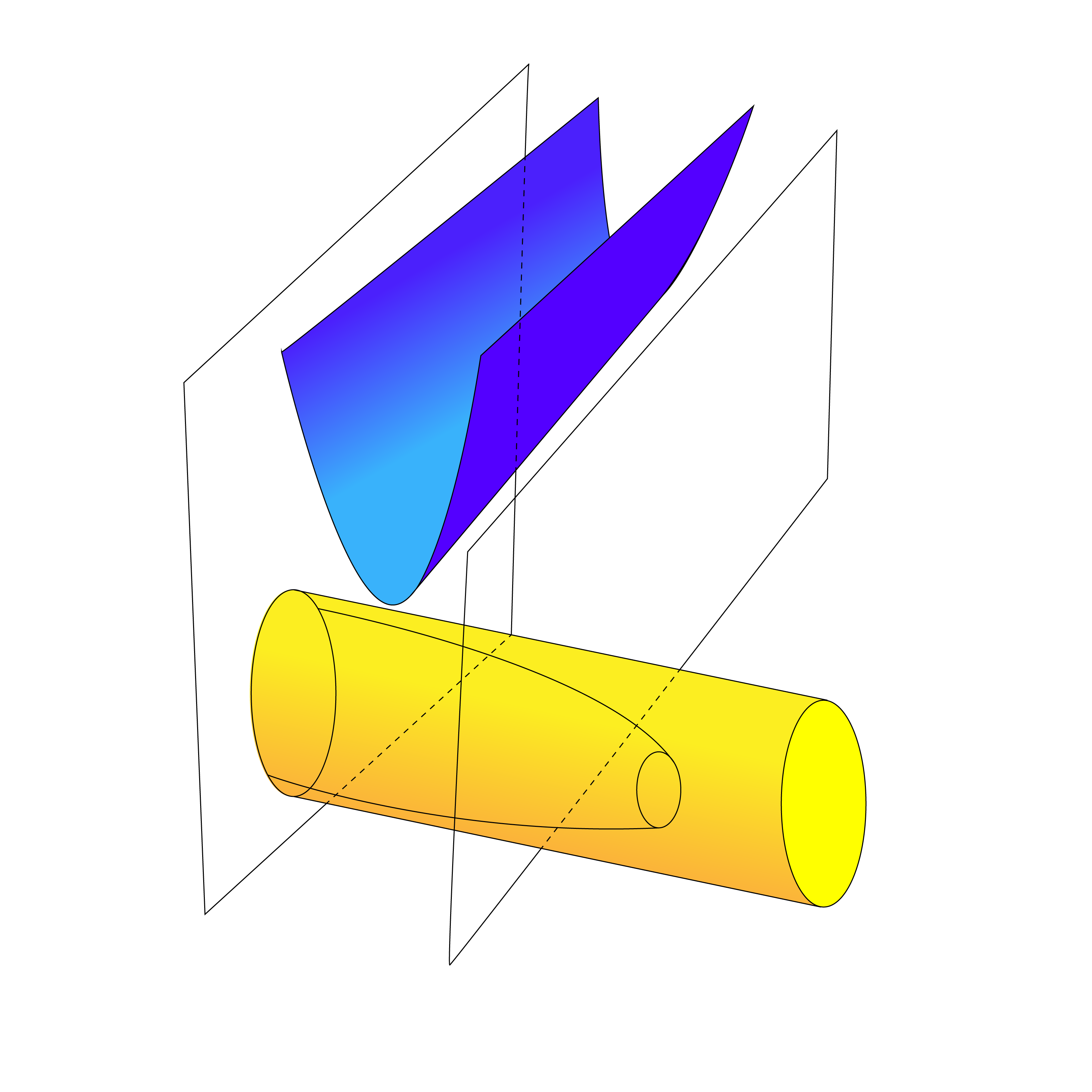}
		\caption{Theorem \ref{non-existence} configuration.}
		\label{fig:mi_imagen1}
	\end{figure}

	\begin{theorem}[Thm. \ref{non-existence}]
Let \( \gamma: \overline{\Gamma} \to [0, \infty) \) be a non-degenerate, normalized curvature function. Then, there does not exist a complete, strictly convex \( \gamma \)-translator \( \Sigma \subset \mathbb{R}^{n+1} \) with compact boundary contained in any non-vertical cylinder.
	\end{theorem}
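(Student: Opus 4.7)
The planned approach is to argue by contradiction using the interior tangential principle from Theorem~\ref{Tangency}(a). Assume such a $\Sigma$ exists. The strategy is to exhibit a Grim Reaper cylinder that can be slid to touch $\Sigma$ from one side at an interior point; the tangential principle will then force $\Sigma$ to coincide with that Grim Reaper, which contradicts strict convexity of $\Sigma$ since the Grim Reaper satisfies $\lambda_2=\cdots=\lambda_n=0$ everywhere.

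The first ingredient is invariance: horizontal translations and rotations about the $e_{n+1}$-axis preserve the class of $\gamma$-translators in direction $e_{n+1}$, since $\gamma$ is symmetric and the normalization $\gamma(1,0,\ldots,0)=1$ ensures the standard Grim Reaper $G^\pi$ translates with unit vertical velocity. This produces a many-parameter family of barrier hypersurfaces, each convex but not strictly convex, and not totally geodesic, so each is an admissible $\Sigma_2$ in Theorem~\ref{Tangency}(a) while $\Sigma$ serves as $\Sigma_1$.

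The core geometric input is the non-verticality of the cylinder $C$ containing $\partial\Sigma$: its axis has a nontrivial horizontal component. I plan to choose a horizontal unit vector $v\perp e_{n+1}$ orthogonal to the horizontal projection of that axis, so that the set $\langle\partial\Sigma,v\rangle$ lies in a bounded interval $I\subset\rr$ (by compactness of $\partial\Sigma$). Orient a Grim Reaper $G$ with slab $\{a<\langle\cdot,v\rangle<a+\pi\}$ chosen disjoint from $I$; then $G$ and every vertical translate $G-te_{n+1}$ remain disjoint from $\partial\Sigma$ throughout the motion. Because $\Sigma$ is complete and strictly convex with compact boundary, $\Sigma$ cannot be confined to the complement of this slab, so $\Sigma$ meets the slab in a nonempty interior set. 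Translate $G$ downward from sufficiently high: there is a first $t_0\geq 0$ at which $G-t_0 e_{n+1}$ meets $\Sigma$, and this contact point $p_0$ is necessarily interior to $\Sigma$, since $\partial\Sigma$ lies outside the slab of the translate.

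At $p_0$ the tangent planes coincide and $G-t_0 e_{n+1}$ lies on one side of $\Sigma$ by the minimality of $t_0$. Theorem~\ref{Tangency}(a) then forces $\Sigma = G-t_0 e_{n+1}$, contradicting strict convexity. The main obstacle I anticipate is justifying rigorously that the slab of $G$ can be arranged both to avoid the $v$-projection of $\partial\Sigma$ \emph{and} to intersect $\Sigma$, and that the Grim Reaper cannot slip past $\Sigma$ during the downward translation without ever touching. Both points rely on non-verticality of $C$, compactness of $\partial\Sigma$, and the translator identity $\gamma=\langle\nu,e_{n+1}\rangle\in[0,1]$, which controls the asymptotic shape of $\Sigma$ and rules out the pathological scenario in which $\Sigma$ recedes to infinity in directions that let the Grim Reaper bypass it altogether.
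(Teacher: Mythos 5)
Your overall strategy coincides with the paper's: build a Grim Reaper barrier in a slab disjoint from $\partial\Sigma$, slide it down to a first interior contact, and invoke the interior tangency principle of Theorem~\ref{Tangency}(a). However, you have parsed the hypothesis differently from the paper, and this creates the very gap you flag at the end. You read ``compact boundary contained in any non-vertical cylinder'' as a constraint on $\partial\Sigma$ only, whereas the paper's proof explicitly uses that the \emph{entire} hypersurface satisfies $\Sigma\subset C_r$: the step ``This is possible since $S\cap\Sigma$ is compact, being contained within the intersection of $S$ and a non-vertical cylinder'' and the final contradiction ``contradicting the assumption that $\Sigma$ is contained in a round cylinder'' both rely on $\Sigma$, not merely $\partial\Sigma$, being inside the cylinder. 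Under the paper's reading the slab $S$ is chosen transverse to the (non-vertical) cylinder axis, so $S\cap C_r$ is a bounded set; hence $S\cap\Sigma$ is compact, the translated Grim Reaper is disjoint from $\Sigma$ for large $t$, the descent achieves a genuine first interior contact, and the argument closes. Your choice of $v$ orthogonal to the horizontal projection of the axis does not use the cylinder hypothesis at all (you derive boundedness of $\langle\partial\Sigma,v\rangle$ from compactness alone), and indeed with $\partial\Sigma$ alone constrained the rest of $\Sigma$ may a priori run off to $+\infty$ in height inside the chosen slab, in which case there is no ``sufficiently high'' initial position and no first contact. Your closing remark that the translator identity $\gamma=\langle\nu,e_{n+1}\rangle\in[0,1]$ ``rules out the pathological scenario'' is an assertion, not a proof, and this is precisely the step that is nontrivial under your weaker reading.

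Two smaller remarks. First, the contradiction you extract (the Grim Reaper is not strictly convex, so $\Sigma$ cannot coincide with it) is valid and in fact somewhat cleaner than the paper's (which appeals to $\Sigma\subset C_r$ again at the end); either works once first contact is established, and one can also observe that $\Sigma$ has nonempty boundary while the Grim Reaper cylinder does not. Second, you also need the slab to actually meet $\Sigma$, not only to miss $\partial\Sigma$; the paper leaves this implicit, but under the $\Sigma\subset C_r$ reading one can arrange it by positioning the slab across the cylinder where $\Sigma$ is known to live. To repair your write-up, adopt the reading $\Sigma\subset C_r$, choose the slab normal direction so that the slab is transverse to the cylinder axis, and then quote compactness of $S\cap\Sigma$ to justify both the disjointness at large height and the existence of a first contact.
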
 
	
	\begin{proof}
	We argue by contradiction, assuming the existence of a complete $\gamma$-translator $\Sigma \subset \mathbb{R}^{n+1}$ with a compact boundary contained within a round non-vertical cylinder $C_r$ of radius $r > 0$.
	\newline
	
	Then, due to the compactness of $\partial \Sigma$, there exists $\theta_0 \in (0, \frac{\pi}{2})$ such that the slab region $S = \{ x \in \mathbb{R}^{n+1} : |\langle x, e_1 \rangle| \leq \pi \sec{\theta_0} \}$
	satisfies $S \cap \partial \Sigma = \emptyset$, see Fig. \ref{fig:mi_imagen1}.
	\newline
	
	Next, we consider the Grim Reaper cylinder $G^\pi$ contained in $S$. There exists $t_0 \in \mathbb{R}$ such that $G^\pi + t e_{n+1}$ does not intersect $\Sigma$ for $t \geq t_0$. We then translate $G^\pi + t_0 e_{n+1}$ downward until it first contacts $\Sigma$. This is possible since $S \cap \Sigma$ is compact, being contained within the intersection of $S$ and a non-vertical cylinder.
\newline
	
	Finally, because $S\cap \partial \Sigma = \emptyset$, the contact point is an interior point. Consequently, by Theorem \ref{Tangency}, we conclude that $\Sigma = G^\pi$, contradicting the assumption that $\Sigma$ is contained in a round cylinder.
	
	\end{proof}
	
	\begin{figure}[h]
		\centering
		\includegraphics[width=.4\textwidth, height=2.5cm]{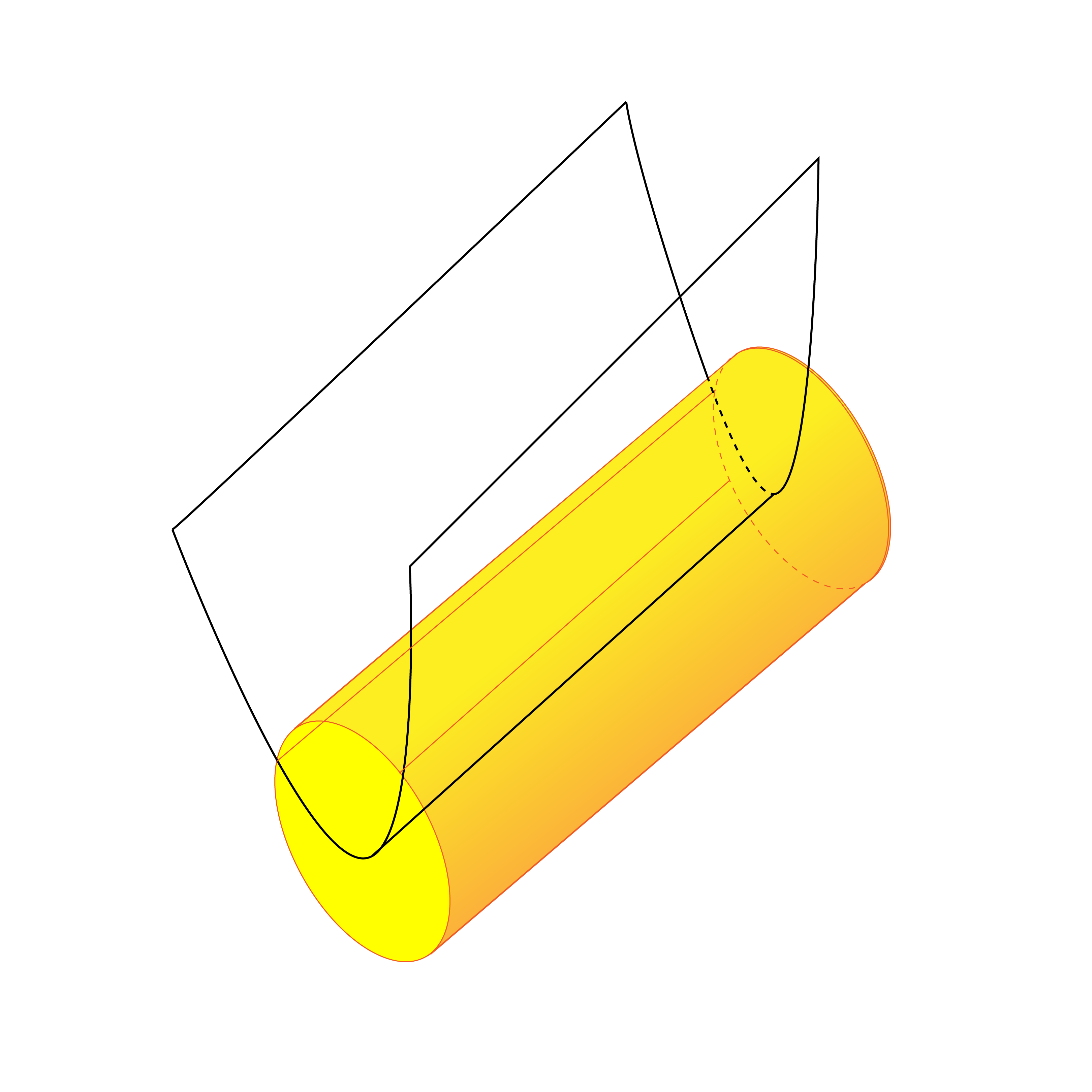}
		\caption{Intersection of a grim reaper cylinder with round cylinder.}
		\label{fig:mi_imagen}
	\end{figure}
	\begin{remark}
We note that there are counterexamples in which the assumption on the boundary of $\Sigma$ is not required. For instance, consider the intersection of $G^\omega$ with a round cylinder whose axis is the straight line $\left(t, 0, \ldots, u_\omega(0, \ldots, 0, t)\right)$, see Fig. \ref{fig:mi_imagen}.
	\end{remark}
		
	\begin{proposition}\label{prop}
		Let $\gamma:\overline{\Gamma}\to[0,\infty)$ be a non-degenerate normalized curvature function. Let $\Sigma\subset\rr^{n+1}$ be a complete convex $\gamma$-translator such that the principal curvatures of $\Sigma$ are all zero except of one. Then $\Sigma$ is a Grim Reaper cylinder for some width $\omega\geq\pi$ up to a vertical translation. 
	\end{proposition}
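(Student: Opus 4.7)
The plan is to show that the kernel of the shape operator $\W$ integrates into complete $(n-1)$-dimensional affine subspaces of $\rr^{n+1}$ along which the normal vector is constant, so that $\Sigma$ is a Euclidean cylinder over a planar curve, and then to identify that curve as a Grim Reaper via the one-dimensional version of the translator equation.

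Denote the (possibly) nonzero principal curvature by $\lambda_1$, so $\lambda_2=\ldots=\lambda_n=0$, and set $\Sigma^{+}=\set{p\in\Sigma:\lambda_1(p)>0}$. On $\Sigma^{+}$ I would pick a local orthonormal frame $\set{\tau_1,\ldots,\tau_n}$ diagonalizing $\W$ with $\W\tau_1=\lambda_1\tau_1$, and set $\mathcal{D}=\mbox{span}(\tau_2,\ldots,\tau_n)=\ker\W$. Two applications of the Codazzi identity yield the structural information. First, since $A(X,\cdot)\equiv 0$ for $X\in\mathcal{D}$, $(\nabla_X A)(\tau_1,Y)=(\nabla_{\tau_1} A)(X,Y)$ with $X,Y\in\mathcal{D}$ collapses to $-\lambda_1\pI{\tau_1,\nabla_X^\Sigma Y}=0$, so $\mathcal{D}$ is involutive with totally geodesic leaves in $\Sigma$. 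Second, since $A(X,X)=0$ for $X\in\mathcal{D}$, the Gauss formula forces $\Sigma$-geodesics tangent to $\mathcal{D}$ to be straight lines of $\rr^{n+1}$; completeness of $\Sigma$ then promotes each leaf of $\mathcal{D}$ to a complete affine $(n-1)$-plane in $\rr^{n+1}$. Moreover $\nabla_Y\nu=-\W Y=0$ for $Y\in\mathcal{D}$, so both $\nu$ and $\tau_1$ are constant along each leaf.

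Normalization $\gamma(1,0,\ldots,0)=1$ together with $1$-homogeneity reduces the translator equation \eqref{gamma-trans} to $\lambda_1=\pI{\nu,e_{n+1}}$, so $\lambda_1$ is leafwise constant as well. One more Codazzi identity $(\nabla_X A)(\tau_1,\tau_1)=(\nabla_{\tau_1} A)(X,\tau_1)$ with $X\in\mathcal{D}$ gives $\lambda_1\pI{X,\nabla_{\tau_1}^\Sigma\tau_1}=X(\lambda_1)=0$, hence $\nabla_{\tau_1}^\Sigma\tau_1=0$ on $\Sigma^{+}$. The integral curve $\sigma$ of $\tau_1$ through a fixed point $p_0\in\Sigma^{+}$ is therefore a $\Sigma$-geodesic with Euclidean kinematics $\ddot\sigma=\lambda_1\nu$ and $\dot\nu=-\lambda_1\tau_1$, so the $2$-plane $\mbox{span}(\tau_1,\nu)$ is parallel-transported along $\sigma$; consequently $\sigma$ lies in a fixed affine $2$-plane $\Pi_2\subset\rr^{n+1}$. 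Since at every point of $\sigma$ the leaf of $\mathcal{D}$ is orthogonal to $\mbox{span}(\tau_1,\nu)$, all leaves are translates of the fixed $(n-1)$-plane $\Pi_2^\perp$, and $\Sigma$ is the Euclidean cylinder $\sigma+\Pi_2^\perp$ over the planar curve $\sigma$.

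Writing $e_{n+1}=e_\parallel+e_\perp$ with $e_\parallel\in\Pi_2$ and $e_\perp\in\Pi_2^\perp$, the restriction of $\lambda_1=\pI{\nu,e_{n+1}}=\pI{\nu,e_\parallel}$ to $\sigma\subset\Pi_2$ is precisely the one-dimensional translator equation in $\Pi_2$ with translation vector $e_\parallel$, whose unique maximal complete solution (up to a translation parallel to $e_\parallel^\perp\cap\Pi_2$) is a Grim Reaper curve of width $\omega=\pi/|e_\parallel|\geq\pi$; comparing with the explicit graph formula $u_\omega$ in the introduction then identifies $\Sigma$ with the Grim Reaper cylinder $G^\omega$ up to a translation in the $e_{n+1}$ direction. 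The main obstacle is ensuring that the cylindrical structure constructed on $\Sigma^{+}$ extends globally across $\set{\lambda_1=0}$: I would handle this by invoking smoothness (in fact real-analyticity, from the elliptic nature of \eqref{gamma-trans}) of $\W$ and $\nu$ on $\Sigma$ together with completeness and connectedness, noting that an integral curve of $\tau_1$ either closes up or reaches the boundary of $\Sigma^{+}$ exactly when the planar curve $\sigma$ attains the endpoints of its maximal interval of length $\omega$, so $\Sigma$ must coincide with the full cylinder $G^\omega$ over the full Grim Reaper.
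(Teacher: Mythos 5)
Your route is genuinely different from the paper's, and mostly sound. The paper disposes of the cylinder structure in one stroke by citing the Hartman--Nirenberg splitting theorem \cite{hartman1959spherical} (a complete hypersurface with $n-1$ vanishing principal curvatures is a cylinder over a plane curve), then writes $\Sigma$ as a graph $x_{n+1}=u(x_1)+ax_n$, computes the shape operator, and reduces the translator equation to the scalar ODE $(1+a^2)u''=1+a^2+(u')^2$, which it solves explicitly. You instead re-derive the splitting by hand: two Codazzi computations show that $\mathcal{D}=\ker\W$ is autoparallel with flat, hence affine, leaves, and that $\nabla_{\tau_1}\tau_1=0$, so the integral curve of $\tau_1$ is a planar curve with frame $\set{\tau_1,\nu}$ spanning a fixed $2$-plane $\Pi_2$. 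You then identify the curve by restricting the translator equation $\lambda_1=\pI{\nu,e_{n+1}}$ to $\Pi_2$ with the projected vector $e_\parallel$, recovering $\omega=\pi/|e_\parallel|$. This is more elementary and self-contained (no external splitting theorem), and the geometric identification via $e_\parallel$ is cleaner than the explicit rescaling $\tilde u(x)=b^2u(xb^{-1})$ in the paper. The de Rham-type global splitting that you invoke implicitly is also the same tool the paper uses explicitly in the proof of Theorem \ref{T1}, so it is in the spirit of the article.

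There is, however, one genuine gap in the closing paragraph. You propose to rule out problems at $\set{\lambda_1=0}$ by appealing to real-analyticity of $\W$ and $\nu$ ``from the elliptic nature of \eqref{gamma-trans}.'' This does not go through: $\gamma$ is only assumed smooth on $\Gamma$ (and merely continuous on $\overline\Gamma$, degenerating to $0$ at $\partial\Gamma$), so the translator equation is a smooth fully nonlinear elliptic equation whose solutions need not be real-analytic; worse, at a point with $\lambda=0\in\partial\Gamma$ the linearization $\Delta_\gamma$ loses uniform ellipticity, so unique continuation or a strong maximum principle for $\gamma$ is not directly available there. The correct way to finish your argument is softer: the one-dimensional ODE $\lambda_1=\pI{\nu,e_\parallel}$ in $\Pi_2$ has, as its maximal solution through any point with $\lambda_1>0$, a complete Grim Reaper curve on which $\lambda_1$ stays strictly positive; therefore the cylinder $G^\omega=\sigma+\Pi_2^\perp$ you build inside $\Sigma^+$ is already a complete Riemannian manifold. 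A complete open submanifold of the same dimension inside the connected complete $\Sigma$ is closed (any finite-length $\Sigma$-geodesic starting in $G^\omega$ stays in $G^\omega$ by completeness), hence open and closed, hence all of $\Sigma$. With that substitution the proof is correct; alternatively, one can simply invoke Hartman--Nirenberg at the outset as the paper does, which handles the flat locus globally without any analyticity.
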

	
	\begin{proof}
		
		Firstly, we  rearrange the principal directions to be $\vec{e}_1$ with principal curvature $\lambda_1>0$. Then, by applying further rigid motions, we may assume that $\Sigma$ is a vertical graph of the form $	x_{n+1}=u(x_1)+ax_{n}$ for some smooth function $u:I\subset\rr\to\rr$ and $a\geq 0$.  For a proof of this fact we  refer the reader to \cite{hartman1959spherical}. 
		\newline
		
		Subsequently, the shape operator with respect the ouward unit normal
		is given by $A^i_j=
				\left(1-\frac{u'}{1+a^2+(u')^2}\right)\frac{u''}{\sqrt{1+a^2+(u')^2}}$ for $ i=j=1$ and $0$ otherwise. Therefore, Eq. \eqref{gamma-trans} gives an initial value problem of the form
		\begin{align*}
			\begin{cases}
				(1+a^2)u''=1+a^2+(u')^2, 
				\\	
				u'(0)=0, u(0)=h.
			\end{cases}
		\end{align*}
		for some height $h\in\rr$. Finally, by taking $\tilde{u}(x)=b^2u\left(xb^{-1}\right)$ with $b^2=1+a^2$, we obtain
		\begin{align*}
			\tilde{u}''=1+(\tilde{u}')^2.
		\end{align*}
		This means that $\tilde{u}=h-\ln(\cos(x))$ with $x\in\left(0,\pi\right)$. Recall that the width of $\Sigma$ is defined by the relation $\omega\pi^{-1}=b$. 
	\end{proof}
	
	The following result is a characterization of $G^\omega$ in terms of angle functions of $\Sigma$.
	
	\begin{theorem}
	Let $\gamma:\Gamma\to (0,\infty)$ be a non-degenerate normalized  curvature function. Let $\Sigma\subset\rr^{n+1}$ be a complete convex not totally geodesic $\gamma$-translator. Then $\Sigma$ is, up to vertical translation, a Grim Reaper cylinder for some width $\omega\geq \pi$ if, and only if, the angle functions $\pI{\nu,e_i}$ vanish identically on $\Sigma$ for $i=2,\ldots,n$ .
	\end{theorem}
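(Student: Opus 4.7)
My plan is to prove the two directions of the biconditional separately, with the bulk of the work in the converse.

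For the direct implication, the cleanest approach is a direct computation on the explicit parametrization. Suppose $\Sigma$ is a vertical translate of the Grim Reaper cylinder, which I may take in its canonical graphical form $F(x_1,\ldots,x_n)=(x_1,\ldots,x_n,-\ln(\sin x_1))$ (the relevant case for the angle condition being $\omega=\pi$, since the tilt term $x_n\sqrt{(\omega/\pi)^2-1}$ present for $\omega>\pi$ creates a nonzero component $\pI{\nu,e_n}$). The outward unit normal of a graph is $\nu=(-\nabla u,1)/\sqrt{1+|\nabla u|^2}$, so $\pI{\nu,e_i}=-\partial_i u/\sqrt{1+|\nabla u|^2}$ for $i=1,\ldots,n$. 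Since the defining function $u(x_1)=-\ln(\sin x_1)$ depends on $x_1$ alone, $\partial_i u\equiv 0$ for $i=2,\ldots,n$, and the angle condition follows.

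For the converse, assume $\pI{\nu,e_i}\equiv 0$ on $\Sigma$ for every $i=2,\ldots,n$. This condition says that each constant ambient vector field $e_i$ is everywhere tangent to $\Sigma$. Since the integral curve of $e_i$ in $\rr^{n+1}$ through any point $p$ is the ambient line $t\mapsto p+te_i$, and since $\Sigma$ is complete, this entire line lies in $\Sigma$. Consequently $\Sigma$ is invariant under the $(n-1)$-parameter group of translations generated by $e_2,\ldots,e_n$, and therefore splits as a product $\Sigma=\gamma_0\times\mathrm{span}(e_2,\ldots,e_n)$, where $\gamma_0\subset\mathrm{span}(e_1,e_{n+1})$ is a complete smooth planar curve.

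With this product structure in hand, a standard computation of the second fundamental form of $\gamma_0\times\rr^{n-1}$ shows that the principal curvatures of $\Sigma$ are the signed curvature of $\gamma_0$ (of multiplicity one) together with $\lambda_2=\cdots=\lambda_n=0$. Since $\Sigma$ is convex and not totally geodesic, $\gamma_0$ is not a straight line, so exactly one principal curvature is nonzero somewhere. This is precisely the hypothesis of Proposition \ref{prop}, which then concludes that $\Sigma$ is, up to a vertical translation, a Grim Reaper cylinder of some width $\omega\geq\pi$, as required.

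The main delicate point of the argument is the step promoting the pointwise tangency $\pI{\nu,e_i}=0$ to the genuine translation invariance of $\Sigma$ in those directions. This step relies essentially on completeness of $\Sigma$ together with the observation that the integral curves of a constant ambient vector field are straight lines, so that tangency at every point of $\Sigma$ actually forces the lines to be embedded in $\Sigma$. Once the product splitting is established, everything else is either a direct computation (as in the direct implication) or a clean invocation of Proposition \ref{prop}.
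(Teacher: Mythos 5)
Your argument for the converse is correct but takes a genuinely different route than the paper's. The paper computes the elliptic operator on the angle function $\theta_i=\pI{\nu,e_i}$, finding $\Delta_\gamma\theta_i=\pI{\nabla\gamma,e_i}-|A|_\gamma^2\theta_i$, so that $\theta_i\equiv 0$ forces $\pI{\nabla\gamma,e_i}\equiv 0$ and hence $D_i\gamma\equiv 0$ along $\Sigma$; from there $1$-homogeneity, $\gamma(1,0,\ldots,0)=1$, and strict monotonicity of $\gamma$ yield $\partial_1\gamma(\lambda)\equiv 1$, $\gamma(\lambda)\equiv\lambda_1$, and finally $\lambda_2=\cdots=\lambda_n=0$, at which point Proposition \ref{prop} is invoked. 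You bypass the translator PDE at this stage altogether: since $\theta_i\equiv 0$ makes the unit-length constant field $e_i$ tangent to $\Sigma$, completeness of $\Sigma$ ensures the flow of $e_i|_\Sigma$ exists for all time, and because the integral curves of a constant ambient field are lines this places $\{p+te_i:t\in\rr\}$ inside $\Sigma$ for every $p$. The resulting invariance under $\mathrm{span}(e_2,\ldots,e_n)$ gives the product splitting $\Sigma=\gamma_0\times\rr^{n-1}$ with $\gamma_0$ a plane curve in $\mathrm{span}(e_1,e_{n+1})$, so the principal curvatures are $(\kappa,0,\ldots,0)$, and Proposition \ref{prop} finishes as before. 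Your route isolates the purely geometric content (completeness plus constant-direction tangency forces a cylindrical splitting) and does not touch the structure equations until the very end, whereas the paper's computation fits its maximum-principle theme; both reductions to Proposition \ref{prop} are valid. Finally, you are right to flag the tilt term in the direct implication: the paper's $G^\omega$ with $\omega>\pi$ does have $\pI{\nu,e_n}\neq 0$, and the paper itself never proves the forward direction, so the angle condition really characterizes the untilted width-$\pi$ cylinder and the quantifier "some width $\omega\geq\pi$" in the statement should be read as $\omega=\pi$ (or else up to a further rigid motion, as in the proof of Proposition \ref{prop}).
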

	
	\begin{proof}
		Let us assume that the angle functions $\theta_i=\pI{\nu,e_i}$ for $i=2,\ldots,n,$ vanish identically on $\Sigma$. Then, by choosing a orthonormal frame $\set{\tau_i}\subset T_p\Sigma$ of principal directions at $p_0$, we have 
		\begin{align*}
			\nabla_a\theta_i&=\pI{\nabla_a\nu, e_i}=A_{ac}\pI{\tau_c,e_i},
		\end{align*}
\begin{align*}
			\Delta_\gamma\theta_i&=\dfrac{\partial\gamma}{\partial A_{ab}}\nabla_b\left(\nabla_a\theta_i\right)=\dfrac{\partial\gamma}{\partial A_{ab}}\left(\left(\nabla_bA_{ac}\right)\pI{\tau_c,e_i}+A_{ac}\pI{\nabla_b\tau_c,e_i}\right)
			\\
			&=\dfrac{\partial\gamma}{\partial A_{ab}}\left(\left(\nabla_cA_{ab}\right)\pI{\tau_c,e_i}-A_{ac}A_{bc}\theta_i\right)
			\\
			&=\pI{\nabla\gamma,e_i}-|A|_\gamma^2\theta_i. 
		\end{align*}
		Therefore, since $\theta_i=0$ on $\Sigma$ for $i=2,\ldots,n$, we have  $\pI{\nabla\gamma,e_i}=0$ on $\Sigma$ for $i>1$. We note that this is equivalent to say that  $\gamma$ only depends on one principal curvature of $\Sigma$, because 
		\begin{align*}
			0=\pI{\nabla\gamma,e_i}=\pI{D\gamma,e_i}-\pI{D\gamma,\nu}\theta_i=D_i\gamma
		\end{align*}
		on $\Sigma$ for $i=2,\ldots,n$, where $D$ denotes the euclidean connection of $\rr^{n}$. 
		\newline
		
		Consequently, only one principal curvature, say $\lambda_1>0$, does not vanish identically on $\Sigma$. To see this, we note that the $1$-homogeneity allow us to write 
		\begin{align*}
			\lambda_1=\lambda_1\gamma(1,0,\ldots,0,)=\gamma(\lambda_1,0,\ldots,0)=\partial_1\gamma(\lambda_1,0,\ldots,0)\lambda_1,
		\end{align*}
		which gives $1=\partial_1\gamma(\lambda_1,0,\ldots,0)=\partial_1\gamma(\lambda)$ on $\Sigma$. Then, we note that 
		\begin{align*}
			\lambda_1+\partial_2\gamma(\lambda)\lambda_2\ldots+\partial_n\gamma(\lambda)\lambda_n	=\gamma(\lambda)=\gamma(\lambda_1,0,\ldots,0)=\lambda_1.
		\end{align*}
		This implies that $\lambda_i=0$ for $i=2,\ldots,n$ since $\gamma$ is increasing in each variable and $\Sigma$ is  convex. Therefore, the results follows by Proposition \ref{prop}. 
	\end{proof}
	
Now, we give the proof of Thm. \ref{convex}, restated here. 
	
	\begin{theorem}[Thm. \ref{convex}]
	Let  $\gamma:\overline{\Gamma}\to[0,\infty)$ be a convex non-degenerate normalized curvature function.
	Let $\Sigma\subset\rr^{n+1}$ be a complete convex non-totally geodesic $\gamma$-translator such that there exists a point $p_0\in\Sigma$ that satisfies  
	\begin{align*}
\lambda_{2}(p_0)=\ldots=\lambda_{n}(p_0)=0,		
	\end{align*}
 where the principal curvatures of $\Sigma$ are ordered by $\lambda_n\leq\ldots\leq\lambda_1$. Then, $\Sigma$ is a Grim Reaper cylinder. 
In particular, if $\Sigma$ contains a straight line, then $\Sigma$ is a Grim Reaper cylinder. 
	\end{theorem}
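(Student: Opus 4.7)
The strategy is to propagate the pointwise equality $\gamma(\lambda(p_0))=H(p_0)$ to every point of $\Sigma$ by a strong maximum principle, and then to extract from this global pinching that $n-1$ principal curvatures vanish identically on $\Sigma$, at which point Proposition \ref{prop} applies. First, I observe that the normalization $\gamma(1,0,\ldots,0)=1$ together with the symmetry of $\gamma$ gives $\gamma(e_j)=1$ for every standard basis vector $e_j$ of $\rr^n$; convexity and $1$-homogeneity then yield, via Jensen's inequality applied to the convex combination $\lambda=H\sum_j(\lambda_j/H)e_j$ (valid because $\lambda_j\ge 0$ on the convex $\Sigma$), the pinching inequality $\gamma(\lambda)\le H$. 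Setting $u:=H-\gamma$, we have $u\ge 0$ on $\Sigma$, with equality at $p_0$ because $\gamma(\lambda(p_0))=\gamma(\lambda_1(p_0),0,\ldots,0)=\lambda_1(p_0)=H(p_0)$.

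Subtracting the evolution equation for $\gamma$ from that for $H$ in Lemma \ref{equations} and using the convexity of $\gamma$ to sign the Hessian term produces
\[
\Delta_\gamma u+\nabla_{n+1}u+|A|_\gamma^2\,u \;=\; -\,\partial^2\gamma^{ab;cd}\nabla_iA_{ab}\nabla_iA_{cd}\;\le\;0.
\]
Since $u\ge 0$ and $|A|_\gamma^2\ge 0$, this is a linear elliptic inequality of the form $Lu+cu\le 0$ with $c\ge 0$, to which the strong minimum principle applies. The non-degeneracy assumption $(1,0,\ldots,0)\in\Gamma$ guarantees that $\gamma$ is smooth at $\lambda(p_0)$ and that $\Delta_\gamma$ is locally uniformly elliptic in a neighborhood. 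The strong minimum principle and a continuation argument along the connected hypersurface $\Sigma$ then give $u\equiv 0$, i.e., $H\equiv\gamma$ on $\Sigma$.

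With $H\equiv\gamma$ in hand, the identity above forces $\partial^2\gamma^{ab;cd}\nabla_iA_{ab}\nabla_iA_{cd}\equiv 0$ on $\Sigma$. Combining this vanishing of a positive semidefinite quadratic form in $\nabla A$ with the Codazzi equations and the equality case of Jensen's inequality, one deduces that at every $p\in\Sigma$ the principal curvature vector $\lambda(p)$ lies on a coordinate ray in $\overline{\Gamma_+}$, i.e., $n-1$ of the principal curvatures vanish identically on $\Sigma$. Proposition \ref{prop} then identifies $\Sigma$, up to a vertical translation, with a Grim Reaper cylinder $G^\omega$ for some $\omega\ge\pi$. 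For the ``in particular'' statement, a complete convex hypersurface containing a straight line $L$ splits as a cylinder over $L$ (standard convex geometry), so $\Sigma=\rr L\times\Sigma'$ with $\Sigma'$ a lower-dimensional complete convex translator for the restricted curvature function $\gamma'$ obtained by fixing one argument at zero; an induction on dimension reduces to a point with $n-1$ vanishing principal curvatures, and the main part of the theorem closes the argument.

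The most delicate step is the rank reduction in the previous paragraph: when $\gamma$ is merely convex (for example $\gamma=H$, in which case $u\equiv 0$ is automatic and carries no information), the pointwise equality does not by itself localize $\lambda(p)$ onto a coordinate ray. One must genuinely exploit the vanishing of the Hessian quadratic form together with the translator identity $\gamma(\lambda)=\langle\nu,e_{n+1}\rangle$; in practice this is most cleanly carried out through a Hamilton-type tensor maximum principle applied to the shape operator $A$, whose $(n-1)$-dimensional null space at $p_0$ then propagates to a parallel null distribution on $\Sigma$ integrating to an $(n-1)$-dimensional flat foliation along which the remaining principal curvature is constant.
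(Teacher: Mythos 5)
Your main line of argument has a genuine gap, and you identify it yourself in the final paragraph: the pinching $u=H-\gamma\geq 0$ obtained by Jensen's inequality, together with the strong minimum principle and $u(p_0)=0$, only yields $H\equiv\gamma$ on $\Sigma$, which carries no information when $\gamma=H$ (a perfectly admissible convex, normalized, non-degenerate curvature function). More to the point, the conclusion you need is a statement about the rank of the second fundamental form, and the equality case of Jensen localizes $\lambda$ onto a coordinate ray only when $\gamma$ is \emph{strictly} convex in off-radial directions, which the theorem does not assume. The vanishing of $\partial^2\gamma^{ab;cd}\nabla_iA_{ab}\nabla_iA_{cd}$ that falls out of $u\equiv 0$ is likewise vacuous for $\gamma=H$. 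So the Jensen-pinching route, as written, cannot close.

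The paper bypasses the pinching entirely and works directly with the second fundamental form. Order $\lambda_n\leq\cdots\leq\lambda_1$ and fix a frame of principal directions at $p_0$; since $\Sigma$ is convex, the diagonal entries $A_{ii}\geq 0$ with $A_{ii}(p_0)=0$ for $i=2,\ldots,n$, so each attains a local minimum at $p_0$. Equation \eqref{A_ij} then reads
\begin{align*}
\Delta_\gamma A_{ii}+\nabla_{n+1}A_{ii}+|A|_\gamma^2 A_{ii}=-\partial^2\gamma^{ab;cd}\nabla_iA_{ab}\nabla_iA_{cd}\leq 0,
\end{align*}
where the sign on the right uses only the convexity of $\gamma$. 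The strong maximum principle gives $A_{ii}\equiv 0$ for those indices, so $\lambda(p)=(\lambda_1(p),0,\ldots,0)$ on all of $\Sigma$, and Proposition \ref{prop} identifies $\Sigma$ with $G^\omega$. This is precisely the Hamilton-type tensor argument you gesture at in your last paragraph but do not carry out; had you made that the primary argument rather than the fallback, the proof would match the paper's. Your reduction of the ``in particular'' clause to a splitting of a convex hypersurface containing a line is sound and matches the intent of the paper's remark.
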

	
	\begin{proof}
		Let $\set{\tau_1,\ldots,\tau_n}$ be an orthonormal frame of principal directions at $p_0\in\Sigma$. Then, by ordering the principal curvatures  by $\lambda_n\leq\ldots\leq\lambda_1$, it follows that $\lambda_i(p_0)=A_{ii}(p_0)=0$ for $i=2,\ldots,n$, and the entries $A_{ii}$ reach a local minimum at $p_0$ for $i=2,\ldots,n-1$.
		\newline
		
		On the other hand, we have Eq. \eqref{A_ij} 
		\begin{align*}
			\Delta_\gamma A_{ii}+\dfrac{\partial^2\gamma}{\partial A_{ab}\partial A_{cd}}\nabla_iA_{ab}\nabla_i A_{cd}+|A|_\gamma^2A_{ii}+\nabla_{n+1} A_{ii}=0.
		\end{align*}
		Then, the strong maximum principle applied to  $A_{ii}$ implies that  $\lambda_i$ vanish identically on $\Sigma$ for $i=2,\ldots,n-1$. Moreover, since $\sup\limits_{p\in\Sigma}\lambda_n(p)=\lambda_n(p_0)=0$, the principal curvatures of in $\Sigma$ are $(\lambda_1,0,\ldots,0)$ and the result follows by proposition \ref{prop}. 
	\end{proof}
	
Next, we give the proof of Thm. \ref{T1}, restated here. 
	\begin{theorem}[Thm. \ref{T1}]
		Let $\gamma:\Gamma\to (0,\infty)$ be a  non-degenerate normalized concave curvature function.  Let $\Sigma$ be a complete convex not totally geodesic $\gamma$-translator. Then, $\Sigma$ is a grim reaper cylinder if, and only if, the function $|A|^2\gamma^{-2}$ attains a local maximum on $\Sigma$.
	\end{theorem}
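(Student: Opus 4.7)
The easy direction is immediate: on any Grim Reaper cylinder $G^\omega$ the only non-vanishing principal curvature is $\lambda_1$, so $|A|^2=\lambda_1^2$, and by $1$-homogeneity together with the normalization $\gamma(1,0,\ldots,0)=1$ one has $\gamma(\lambda)=\lambda_1$; hence $|A|^2\gamma^{-2}\equiv 1$ attains its global maximum at every point.

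For the converse, set $\phi=|A|^2\gamma^{-2}$ and assume it attains a local maximum at $p_0\in\Sigma$. The plan is to show that $\phi$ is a subsolution of a linear elliptic operator on the open set $\{|A|>0\}$, invoke the strong maximum principle to conclude $\phi$ is constant on $\Sigma$, and then extract from the equality cases the pointwise information that at each point at most one principal curvature of $\Sigma$ is non-zero, so that Proposition~\ref{prop} identifies $\Sigma$ as a Grim Reaper cylinder. Substituting the equations for $\Delta_\gamma\gamma$ and $\Delta_\gamma|A|^2$ from Lemma~\ref{equations} into $\Delta_\gamma\phi$ (so that the $|A|^2_\gamma$-terms cancel) I expect the identity
\begin{align*}
\Delta_\gamma\phi+\nabla_{n+1}\phi+\frac{4}{\gamma}\pI{\nabla\phi,\nabla\gamma}_\gamma=\frac{2}{\gamma^2}\Bigl[-\lambda_i\,\partial^2\gamma^{ab;cd}\nabla_iA_{ab}\nabla_iA_{cd}+\norm{\nabla A}_\gamma^2-\frac{|A|^2}{\gamma^2}\norm{\nabla\gamma}_\gamma^2\Bigr].
\end{align*}
The first bracketed term is non-negative by concavity of $\gamma$ together with $\lambda_i\geq 0$. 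For the remaining two terms I substitute $\nabla|A|^2=\gamma^2\nabla\phi+2|A|^2\gamma^{-1}\nabla\gamma$ into the Kato inequality $\norm{\nabla|A|^2}_\gamma^2\leq 4|A|^2\norm{\nabla A}_\gamma^2$ and rearrange to obtain
\begin{align*}
\norm{\nabla A}_\gamma^2-\frac{|A|^2}{\gamma^2}\norm{\nabla\gamma}_\gamma^2\geq \frac{\gamma^2}{4|A|^2}\norm{\nabla\phi}_\gamma^2+\gamma\pI{\nabla\phi,\nabla\gamma}_\gamma,
\end{align*}
which, combined with the previous identity, produces the differential inequality
\begin{align*}
\Delta_\gamma\phi+\nabla_{n+1}\phi+\frac{2}{\gamma}\pI{\nabla\phi,\nabla\gamma}_\gamma\geq 0
\end{align*}
on $\{|A|>0\}$. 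The strong maximum principle then forces $\phi$ to be constant on the connected component of $p_0$, and continuity across $\{|A|=0\}$, combined with the convexity of $\Sigma$, extends this to all of $\Sigma$.

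Constancy of $\phi$ saturates the Kato inequality, and this, together with the Codazzi symmetries of $\nabla A$, forces $\nabla_iA_{pq}=0$ unless $p=q=i$, and $\nabla_iA_{ii}$ proportional to $\lambda_i$. At any point where at least two principal curvatures are simultaneously non-zero these conditions collapse to $\nabla A=0$, but a hypersurface with parallel second fundamental form that is a convex, non-totally-geodesic $\gamma$-translator can be ruled out by direct inspection of the translator equation $\gamma(\lambda)=\pI{\nu,e_{n+1}}$ (round cylinders and spheres fail to solve it under the non-degeneracy hypothesis). Hence at every point of $\Sigma$ only one principal curvature is non-zero, and Proposition~\ref{prop} identifies $\Sigma$, up to vertical translation, with a Grim Reaper cylinder. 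The main obstacle in this strategy is the Kato-type bookkeeping that produces the differential inequality: without reinserting the $\nabla\phi$ correction terms obtained from expanding $\nabla|A|^2=\gamma^2\nabla\phi+2|A|^2\gamma^{-1}\nabla\gamma$, one only recovers a pointwise inequality at $p_0$ and the strong maximum principle cannot be applied; a secondary subtlety is the final case analysis that rules out the parallel-$A$ case, which requires combining Codazzi with the translator equation.
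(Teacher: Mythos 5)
Your proposal is essentially correct and reproduces the computational core of the paper's proof, but it diverges from the paper in two places worth flagging.

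First, the derivation of the differential inequality. The paper does not invoke Kato; instead it writes the key remainder as a perfect square, namely
\begin{align*}
Q^2=\gamma^2\norm{\nabla A}_\gamma^2+|A|^2\norm{\nabla\gamma}_\gamma^2-\gamma\pI{\nabla|A|^2,\nabla\gamma}_\gamma=\norm{\gamma\nabla A-\nabla\gamma\otimes A}_\gamma^2\geq0,
\end{align*}
which is stronger than the Kato inequality and immediately isolates the equality case $\gamma\nabla_iA_{pq}=\nabla_i\gamma\,A_{pq}$. Your route---substitute $\nabla|A|^2=\gamma^2\nabla\phi+2|A|^2\gamma^{-1}\nabla\gamma$ into the $\gamma$-weighted Kato inequality---lands on the same differential inequality and the same equality case (Kato saturation in each direction $i$ forces $\nabla_iA_{pq}=c_iA_{pq}$ with $c_i=\nabla_i\gamma/\gamma$ by Euler's relation), so the two approaches are mathematically equivalent. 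Note a small typo: the Kato rearrangement should read $\dfrac{\gamma^4}{4|A|^2}\norm{\nabla\phi}_\gamma^2$ rather than $\dfrac{\gamma^2}{4|A|^2}\norm{\nabla\phi}_\gamma^2$; the subsequent cancellation and final inequality $\Delta_\gamma\phi+\nabla_{n+1}\phi+\frac{2}{\gamma}\pI{\nabla\phi,\nabla\gamma}_\gamma\geq0$ are still correct with the corrected coefficient.

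Second, the rigidity step. After $\phi$ is constant, the paper does \emph{not} conclude ``only one nonzero principal curvature'' globally; it establishes this on a neighborhood $U$ where $\nabla\gamma\neq0$, then invokes the nullity distribution, shows $\mathcal{D}^\perp$ is autoparallel, applies the de Rham splitting to $F(U)$, and finally uses the tangency principle (Theorem \ref{Tangency}) to identify $\Sigma$ with $G^\omega$. Your route is more direct: you argue that the open set $V$ where at least two principal curvatures are positive must be empty (else $\nabla A=0$ there), and then Proposition \ref{prop} applies globally. This is a genuine simplification, but the ruling-out of $V$ is hand-waved. The phrase ``round cylinders and spheres fail to solve it'' does not by itself give a contradiction, since $\nabla A(p)=0$ at a point does not make $\Sigma$ a cylinder or sphere. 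What you actually have is: $V$ is open, and on $V$ the equality condition forces $\nabla\gamma=0$ and $\nabla A=0$, hence $\gamma$ is locally constant; substituting into Eq.\ \eqref{gamma} gives $|A|_\gamma^2\gamma=0$, and since $\gamma>0$, $\partial_i\gamma>0$, and $\lambda_i\geq0$, one gets $\lambda\equiv0$ on $V$, contradicting $\lambda\in\Gamma$. With that replacement the argument is sound. (This is exactly the paper's ``Case 1'' computation, applied to $V$ rather than to all of $\Sigma$.) It would also be worth noting explicitly that $\lambda\neq0$ pointwise since $\Gamma$ is an open cone, so $\phi(p_0)>0$ and continuity across $\{|A|=0\}$ is never an issue.

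In summary: the identity, the maximum-principle application, and the appeal to Proposition \ref{prop} are all sound, and your final step shortcuts the paper's de Rham--tangency argument. You should correct the Kato coefficient and replace the informal dismissal of the $\nabla A=0$ case with the Eq.\ \eqref{gamma} computation sketched above.
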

	
	\begin{proof}
		We consider $f(p)=|A|^2\gamma^{-2}$. Then, by using an orthonormal frame $\set{\tau_i}\subset T_p\Sigma$ of principal directions, we have the following equations at  $p\in\Sigma$:
		\begin{align*}
			&\nabla f=\dfrac{\nabla |A|^2}{\gamma^2}-2f\dfrac{\nabla \gamma}{\gamma},
			\\
			&\Delta_\gamma f=\dfrac{\Delta_\gamma |A|^2}{\gamma^2}-2\dfrac{f}{\gamma}\Delta_\gamma\gamma-4\pI{\nabla f,\dfrac{\nabla\gamma}{\gamma}}_{\gamma}-2\dfrac{f}{\gamma^2}\norm{\nabla \gamma}_\gamma^2.
		\end{align*}
		Furthermore, by Proposition \ref{equations}, we observe that
		\begin{align}\label{Eq f}
			\Delta_\gamma f+2\dfrac{\lambda_{i}}{\gamma^2}\dfrac{\partial^2\gamma}{\partial A_{ab}A_{cd}}\nabla_iA_{ab}\nabla_iA_{cd}+\nabla_{n+1} f+2\pI{\nabla f,\dfrac{\nabla \gamma}{\gamma}}_{\gamma}=2\gamma^{-4}Q^2,
		\end{align}
		where 
		\begin{align*}
			Q^2&=\gamma^2\norm{\nabla A}_\gamma^2+|A|^2\norm{\nabla \gamma}_\gamma^2-\gamma\pI{\nabla |A|^2,\nabla \gamma}_\gamma
			\\
			&=\norm{\gamma\nabla A-\nabla\gamma\otimes A}_\gamma^2\geq 0. 
		\end{align*}
		
		Then, the maximum principle applied to the Equation \eqref{Eq f} implies that $f(p)$ is constant on $\Sigma$, since the term   
		\begin{align*}
			\dfrac{\lambda_i}{\gamma^2}\dfrac{\partial^2\gamma}{\partial A_{ab}A_{cd}}\nabla_iA_{ab}\nabla_iA_{cd}\leq 0,
		\end{align*}
		by the concavity of $\gamma$ and the convexity of $\Sigma$.  
		\newline
		
		In particular, the term $\gamma^{-4}Q^2$ in Equation \eqref{Eq f} must vanishes identically on $\Sigma$. This means that 
		\begin{align}\label{Condition 0}
			\gamma\left(\nabla_i A(\tau_k,\tau_j)\right)-\nabla_i \gamma A(\tau_k,\tau_j)=0, \mbox{ for all }i,,j,k.
		\end{align}
		Furthermore,  by the Codazzi equations we have
		\begin{align}\label{Condition}
			(\nabla_i\gamma )A(\tau_j,\tau_k)=(\nabla_j\gamma)A(\tau_i,\tau_k) ,\mbox{ for all }i,j,k.
		\end{align}
		
		Now, we analyze two cases from  Equation \eqref{Condition}.
		\newline
		\textbf{Case 1:} Let us assume that $\gamma$ is a positive constant. Then, Eq. \eqref{gamma} implies that $|A|_\gamma^2\gamma=0$. But since condition \ref{a)} and \ref{c)} holds, it follows that $\lambda_i=0$ for all $i$, but this contradicts that $\Sigma$ is not totally geodesic.
		\newline
		\newline
		\textbf{Case 2:} Now we choose a simply connected neighborhood $U$ such that $|\nabla \gamma|\neq 0$. Then, for each point $p\in U$ we choose an orthonormal frame on $T_pU$ given by
		\begin{align*}
			\set{\tau_1=\dfrac{\nabla \gamma}{|\nabla \gamma|},\tau_2,\ldots,\tau_n}. 
		\end{align*} 
		We note that in this frame $\nabla_j\gamma=0$ and $\nabla_1\gamma=|\nabla\gamma|$. Moreover,  Equation \eqref{Condition} implies that $A(\tau_j,\tau_k)=0$ for $k\geq 1$ and $j\geq 2$.
		Consequently, we have $\Sigma$ posses only one nonzero principal curvature on $U$. 
		\newline
		
		Unfortunately, we cannot apply Proposition \ref{prop} because $U$ is not necessarily complete. However, we consider the nullity distribution $\mathcal{D}:\Sigma\to T\Sigma$ associated to the second fundamental form $A$, i.e:
		\begin{align*}
			\mathcal{D}(p)=\set{v\in T_p\Sigma: A(v,\cdot)=0}. 
		\end{align*}
		It is a well know fact that $\mathcal{D}$ is smooth, integrable and  autoparallel, i.e: for any $X, Y\in\mathcal{D}$ we have $\nabla_XY\in \mathcal{D}$. The integral submanifolds of $\mathcal{D}$ are totally geodesic in $\Sigma$ and their images via the immersion $F:\Sigma\to\rr^{n+1}$ are totally geodesic submanifolds of $\rr^{n+1}$. Moreover, the gauss map $\nu:\Sigma\to\Sp^n$ of the immersion $F$ is constant along the leaves of $\mathcal{D}$. 
		\newline
		
		Now, let $\mathcal{D}^{\perp}=\mbox{span}(\tau_i)$ be the orthogonal complement of $\mathcal{D}$ and we fix $p_0\in\Sigma$ and set $\tau=\tau_1(p_0)$. Then, we extend $\tau$ by parallel transport to a vector field along the integral curve $\alpha:(-\eps,\eps)\to\Sigma$
		with direction of $\tau_1$ passing through $p_0$. 
		\newline
		
		Then, due to Eq. \eqref{Condition 0}, the shape operator $\W:T\Sigma\to T\Sigma$ satisfies 
		\begin{align*}
			(\nabla _{\tau_1}\W)v=\tau_1(\gamma)v,
		\end{align*}
		for any $v\in T\Sigma$. In particular,  we have 
		\begin{align*}
			\dfrac{d}{dt}|\left(\W-\gamma\mbox{Id}\right)\tau|^2&= 2\pI{\left(\nabla_{\tau_1}\W\right)\tau- \tau_1(\gamma)\tau, \W\tau-\gamma\tau}=0.
		\end{align*}
		Therefore $\tau=\tau_1$ is a parallel vector field along the integral curves of
		$\tau_1$. Then, since $\mathcal{D}^{\perp}$ is spanned by $\tau_1$, we obtain that
		the orthogonal complement $\mathcal{D}^\perp$ of $\mathcal{D}$ is autoparallel in $T\Sigma$. 
		\newline
		
		Subsequently, since $T\Sigma=\mathcal{D}\oplus\mathcal{D}^{\perp}$ and trivially $\mathcal{D}$ and $\mathcal{D}^\perp$ are parallel to each other, the de Rham decomposition Theorem give us that $F(U)$ splits as the Cartesian product of a plane curve $G$ with a euclidean
		factor $\rr^n$. Then, since $\Sigma$ is $\gamma$-translator this curve must be a Grim Reaper curve. 
		\newline
		
		Consequently, $F(\Sigma)$ contains an open neighborhood that is part of a Grim Reaper cylinder, and by Theorem \ref{Tangency}, we finally obtain that $\Sigma$ is a Grim Reaper cylinder.
	\end{proof}
	
	\section{\textbf{Conclusions}}\label{Conclusion}
In summary, in this paper, we used maximum principle-type arguments to generalize well-known results on the characterization scheme used for $H$-translators in $\mathbb{R}^{n+1}$ for fully nonlinear curvature functions 
$\gamma: \overline{\Gamma} \to [0, \infty)$ with a sign on $\text{Hess}(\gamma)$. 
\newline

On one hand, as a corollary of the main result and the characterization of Grim Reaper cylinders obtained in this paper we can give the proof  of Corollary \ref{Coro}:
\begin{proof}
Let $\Sigma \subset \mathbb{R}^{n+1}$ be a $\gamma$-translator with $\gamma: \overline{\Gamma} \to [0, \infty)$ being a normalized non-degenerate curvature function as in Thm.\ref{Generalization of SX}. Then, if $\Sigma$ is not strictly convex, it holds $\Sigma=\Sigma'\times\rr^{n-k}$ where $\Sigma'$ is strictly convex graph in $\rr^{k+1}$: in the case of $\gamma$ is convex, there exists a point $p_0 \in \Sigma$ such that $\lambda_1(p_0) = 0 < \lambda_2(p_0)$. Then, by Theorem \ref{convex}, $\Sigma$ is a Grim Reaper cylinder. On the other hand, if  $\gamma$ concave, then $f(p)=\dfrac{|A|^2}{\gamma}$ is a $0$-homogeneous function such that $\partial_i f(1,0,\ldots,0)=-2\partial_i\gamma(1,0,\ldots,0)$, if $i\neq 1$ and $\partial_i f(1,0,\ldots,0)=0$, otherwise. Therefore, $f(p)$ attains a local maximum at $p_0\in\Sigma$ such that $\lambda(p_0)=(1,0,\ldots,0)\in\Gamma$. Then, by Thm. \ref{T1}, $\Sigma$ is a Grim Reaper cylinder. 
\end{proof}
	Consequently, the possible graphical solutions to Eq. \eqref{gamma-trans} in $\mathbb{R}^{3}$ under the above assumptions are the Grim Reaper cylinders and the strictly convex solutions, including among them the ``bowl''-type solutions constructed in \cite{Shati}. 

 Finally, we would like to propose the following open questions related to the characterization of convex $\gamma$-translators in $\mathbb{R}^{n+1}$: 
 \newline	
 \textbf{Q1:} Let $\Sigma = \set{(x, u(x)) \in \mathbb{R}^{n+1}:x\in\rr^n}$ be a strictly convex, complete $\gamma$-translator. Under which geometric constraints does the following growth estimate 
 	 \begin{align*}
 	 	 u(x) \leq C|x|^{\alpha+1}, \quad \text{for } |x| \geq R \gg 1, \mbox{ hold ?}
  	  \end{align*} 
    This result is well-known for $\gamma = H^{\alpha}$, with the case $\alpha = 1$ first proven in \cite{wang2011convex}, and the case $\alpha \neq 1$ in \cite{chen2015convex}. 
 \newline
 \textbf{Q2:}What class of curvature functions  allows the existence of the family of $\Delta$-wing solutions? These solutions are well-known for $\gamma = H$, see \cite{hoffman2019graphical}.

	\begin{bibdiv}
		\begin{biblist}
			
			\bib{alias2013hypersurfaces}{article}{
				title={Hypersurfaces of constant higher order mean curvature in warped products},
				author={Al{\'\i}as, L.},
				author={Impera, D.},
				author={Rigoli, M.},
				journal={Transactions of the American Mathematical Society},
				volume={365},
				number={2},
				pages={591--621},
				year={2013}
			}	
			
			\bib{chow2020extrinsic}{book}{
				author={Andrews, B.},
				author={Chow, B.},
				author={Guenther, C.},
				author={Langford, M.},
				title={Extrinsic geometric flows},
				series={Graduate Studies in Mathematics},
				volume={206},
				publisher={American Mathematical Society, Providence, RI},
				date={2020},
				pages={xxviii+759},
				isbn={978-1-4704-5596-5},
				review={\MR{4249616}},
			}
			
			\bib{andrews2015convexity}{article}{
				title={Convexity estimates for surfaces moving by curvature functions},
				author={Andrews, B.},
				author={Langford, M.},
				author={McCoy, J.},
				journal={Journal of Differential Geometry},
				volume={99},
				number={1},
				pages={47--75},
				year={2015},
				publisher={Lehigh University}
			}
			
			\bib{brendle2017fully}{article}{
				title={A fully nonlinear flow for two-convex hypersurfaces in Riemannian manifolds},
				author={Brendle, S. and Huisken, G.},
				journal={Inventiones mathematicae},
				volume={210},
				pages={559--613},
				year={2017},
				publisher={Springer}
			}
			
			\bib{Bourni}{article}{
			author={Bourni, T.},
			author={Langford, M.},
			author={Tinaglia, G.},
			title={Translating Solutions to Mean Curvature Flow}
		booktitle={Minimal Surfaces: Integrable Systems and Visualisation}
		year={2021}
	}

			\bib{cogo2023rotational}{article}{
				title={Rotational symmetry of ancient solutions to fully nonlinear curvature flows},
				author={Cogo, A.},
				author={Lynch, S.},
				author={ Vi$\check{c}$ánek-Martínez, O.}, 
				journal={arxiv preprint arXiv:2310.08301}
				year={2023}
			} 
			
			\bib{chen2015convex}{article}{
				title={Convex solutions to the power-of-mean curvature flow},
				author={Chen, S.},
				journal={Pacific Journal of Mathematics},
				volume={276},
				number={1},
				pages={117--141},
				year={2015},
				publisher={Mathematical Sciences Publishers}
			}

		\bib{choi2020uniqueness}{article}{
				title={Continuous family of surfaces translating by powers of Gauss curvature},
				author={Choi, B.}
				author={ Choi, K.}
				author={Kim, S.},
				journal={arXiv preprint arXiv:2402.17075},
				year={2024}
		}

			\bib{hartman1959spherical}{article}{
				title={On spherical image maps whose Jacobians do not change sign},
				author={Hartman, P.}
				author = {Nirenberg, L.},
				journal={American Journal of Mathematics},
				volume={81},
				number={4},
				pages={901--920},
				year={1959},
				publisher={JSTOR}
			}
			
			\bib{hoffman2017notes}{inproceedings}{
				title={Notes on translating solitons for mean curvature flow},
				author={Hoffman, D},
				author= {Ilmanen, T.},
				author={Mart{\'\i}n, F.}
				author={White, B.},
				booktitle={m: iv Workshops},
				pages={147--168},
				year={2017},
				organization={Springer}
			}
			
			\bib{hoffman2019graphical}{article}{
				title={Graphical translators for mean curvature flow},
				author={Hoffman, D.}
				author={Ilmanen, T.}
				author={Mart{\'\i}n, F.}
				author={White, B.},
				journal={Calculus of Variations and Partial Differential Equations},
				volume={58},
				number={4},
				pages={117},
				year={2019},
				publisher={Springer}
			}
			
			\bib{huisken_polden_1996}{book}{
				AUTHOR = {Huisken, G.}
				author = {Polden, A.},
				TITLE = {Geometric evolution equations for hypersurfaces},
				BOOKTITLE = {Calculus of variations and geometric evolution problems
					({C}etraro, 1996)},
				SERIES = {Lecture Notes in Math.},
				VOLUME = {1713},
				PAGES = {45--84},
				PUBLISHER = {Springer, Berlin},
				YEAR = {1999},
				MRCLASS = {53C44 (35K55 49Q05)},
				MRNUMBER = {1731639},
				MRREVIEWER = {John Urbas},
				DOI = {10.1007/BFb0092669},
				URL = {https://doi.org/10.1007/BFb0092669},
			}
			
			\bib{Paco_2014}{article}{
				author={Mart\'{\i}n, F.},
				author={Savas-Halilaj, A.},
				author={Smoczyk, K.},
				title={On the topology of translating solitons of the mean curvature
					flow},
				journal={Calc. Var. Partial Differential Equations},
				volume={54},
				date={2015},
				number={3},
				pages={2853--2882},
				issn={0944-2669},
				review={\MR{3412395}},
				doi={10.1007/s00526-015-0886-2},
			}

			\bib{rengaswami2023ancient}{article}{
				title={Ancient pancake solutions to fully nonlinear curvature flows}, 
				author={Langford, M.},
				author={Rengaswami, S.},
				year={2023},
				eprint={2303.09078},
				archivePrefix={arXiv},
				primaryClass={math.DG}
			}
			
			\bib{lou2023translating}{article}{
				title={Translating Solutions of a Generalized Mean Curvature Flow in a Cylinder: I. Constant Boundary Angles},
				author={Lou, B},
				author={Yuan, L.},
				journal={Journal de Math{\'e}matiques Pures et Appliqu{\'e}es},
				year={2023},
				publisher={Elsevier}
			}
			\bib{lynch2020convexity}{article}{
					title={Convexity and gradient estimates for fully nonlinear curvature flows},
					author={Lynch, S.},
					year={2020},
					journal={Dissertation}
					school={Universit{\"a}t T{\"u}bingen}
				}
			\bib{lynch2023differential}{article}{
				title={A differential Harnack inequality for noncompact evolving hypersurfaces},
				author={Lynch, S.},
				journal={arXiv preprint arXiv:2310.07369},
				year={2023}
			}
			
			\bib{martinez2022mean}{article}{
				title={Mean convex properly embedded $[\varphi, e_3]$-minimal surfaces in $\mathbb{R}^3$.},
				author={Mart{\'\i}nez, A.}
				author={Mart{\'\i}nez-Trivi{\~n}o, A.}
				author={Paulo dos Santos, J.}
				journal={Revista Mathematica Iberoamericana},
				volume={38},
				number={4},
				year={2022}
			}
			
			\bib{Shati}{article}{
				title={Classification of Bowl-Type Translators to Fully Nonlinear Curvature Flows},
				author={Rengaswami, S.},
				journal={The Journal of Geometric Analysis},
				volume={34},
				number={6},
				pages={1--23},
				year={2024},
				publisher={Springer}
			}
			
			\bib{Shati-Yo}{article}{
				author={Rengaswami, S.},
				author={Torres S., J.} 
				title={Rigidity and Asymptotics of Translating Solutions to Curvature Flows.},
				journal={Arxiv preprint arxiv: 2310.09984},
				date={2023},
			}
			
			\bib{spruck2020complete}{article}{
				title={Complete translating solitons to the mean curvature flow in $\mathbb{R}^3$ with nonnegative mean curvature},
				author={Spruck, J.},
				author= { Xiao, L.},
				journal={American Journal of Mathematics},
				volume={142},
				number={3},
				pages={993--1015},
				year={2020},
				publisher={Johns Hopkins University Press}
			}
			
			\bib{spruck2021convexity}{article}{
				title={Convexity of 2-Convex Translating Solitons to the Mean Curvature Flow in R\^{} n+ 1 R n+ 1},
				author={Spruck, J.},
				author={Sun, L.},
				journal={The Journal of Geometric Analysis},
				volume={31},
				pages={4074--4091},
				year={2021},
				publisher={Springer}
			}
			
			\bib{Yo}{article}{
					author = {Torres S., J.},
					title = {Maximum principles and consequences for $\gamma$-translators in ${\mathbb {R}}^{n+1}$},
					journal = {Bulletin of the London Mathematical Society},
					doi = {https://doi.org/10.1112/blms.13004},
			}
			
			\bib{xie2023convexity}{article}{
				title={Convexity of 2-Convex Translating and Expanding Solitons to the Mean Curvature Flow in $\rr^{n+ 1}$},
				author={Xie, J. }
				author={Yu, J.},
				journal={The Journal of Geometric Analysis},
				volume={33},
				number={8},
				pages={252},
				year={2023},
				publisher={Springer}
			}

\bib{wang2011convex}{article}{
	title={Convex solutions to the mean curvature flow},
	author={Wang, X-J.},
	journal={Annals of mathematics},
	pages={1185--1239},
	year={2011},
	publisher={JSTOR}
}
		\end{biblist}
	\end{bibdiv}

\end{document}